\documentclass[a4paper,12pt,english]{article}
\usepackage[utf8x]{inputenc}
\usepackage{amsmath, amsthm, amssymb, graphicx, makeidx, mathrsfs, enumerate}
\usepackage{setspace}
\usepackage{hyperref}
\usepackage[labelformat=empty]{caption}
\usepackage[T1]{fontenc} 
\usepackage{geometry}
\usepackage{siunitx} 
\usepackage[multiple]{footmisc}
\usepackage{adjustbox}
\usepackage[T1]{fontenc} 
\usepackage{geometry}
\usepackage{siunitx}
\usepackage{array, booktabs}
\usepackage{array, booktabs}
\usepackage{multirow}
\usepackage[T1]{fontenc}
\usepackage{babel}
\usepackage{romannum}
\usepackage{authblk}
\usepackage{mathtools}

\newcommand{\Z}{\mbox{$\mathbb Z$}}	
\newcommand{\Q}{\mbox{$\mathbb Q$}}	
\newcommand{\R}{\mbox{$\mathbb R$}}     
\newcommand{\C}{\mbox{$\mathbb C$}}     

\newcommand{\F}{\mbox{$\mathbb F$}}	

\usepackage{blindtext}
\usepackage{sectsty}
\sectionfont{\centering}

\usepackage{vmargin}
\setmarginsrb     { 0.9in}  
                        { 0.6in}  
                        { 0.78in}  
                        { 0.6in}  
                        {  20pt}  
                        {0.25in}  
                        {   9pt}  
                        { 0.3in}  
\raggedbottom

\newtheorem{theorem}{Theorem}[section]
\newtheorem{lemma}[theorem]{Lemma}
\newtheorem{corollary}[theorem]{Corollary}
\theoremstyle{definition}
\newtheorem{definition}[theorem]{Definition}
\newtheorem{example}[theorem]{Example}

\theoremstyle{remark}

\onehalfspacing
\begin{document}
\addcontentsline{toc}{chapter}{\bf Notation}
\addcontentsline{toc}{chapter}{\bf Asymptotic Notation for Runtime Analysis}
\pagenumbering{arabic}

\author[]{{\small { SUMANDEEP KAUR\footnote{Department of Mathematics, Panjab University Chandigarh-160014,  India. Email : sumandhunay@gmail.com}  }~~AND~ SUDESH K. KHANDUJA\footnote{Corresponding author} \footnote{Indian Institute of Science Education and Research Mohali, Sector 81, Knowledge City, SAS Nagar, Punjab - 140306, India  \textsc{\&} Department of Mathematics, Panjab University, Chandigarh - 160014, India. Email : skhanduja@iisermohali.ac.in}} \\{\small{(Dedicated to Professor Ram Prakash Bambah on his $95^{th}$ Birthday)} }}

\date{}
\renewcommand\Authands{}

\title{\large{\bf{\textsc{ Discriminant and Integral basis of sextic fields defined by $x^6+ax+b$}}}}

\maketitle
\begin{center}
{\large{\bf {\textsc{Abstract}}}}
\end{center}
\noindent Let $K=\Q(\theta)$ be an algebraic number field with $\theta$ a root of an irreducible trinomial $f(x)=x^6+ax+b$ belonging to $\mathbb{Z}[x]$. In this paper, for each prime  number $p$ we compute the highest power of $p$ dividing the discriminant  of $K$ in terms of the prime powers dividing $a,~b$ and discriminant of $f(x)$. An explicit $p$-integral basis  of $K$ is also given for each prime  $p$ and a method is  described to obtain an integral basis of $K$ from these $p$-integral bases which is illustrated with examples.

\bigskip

\noindent \textbf{Keywords :} Discriminant, Integral basis,  $p$-integral basis.

\bigskip

\noindent \textbf{2010 Mathematics Subject Classification :} 11R04; 11R29.
\newpage
\section{{Introduction and statements of results}}
Discriminant is a basic invariant associated with an algebraic number field.  Its computation is one of the most important problems in algebraic number theory. For an algebraic number field  $K=\mathbb {Q}(\theta)$  with $\theta$ in the ring $A_K$ of algebraic integers of $K$  having $f(x)$ as its  minimal polynomial over the field $\mathbb {Q}$ of rational numbers, the discriminant $d_K$ of $K$ and the discriminant of the polynomial $f(x)$ are related by the formula $discr(f)=[A_K:\mathbb Z[\theta]]^2d_K.$ So the computation of $d_K$ is closely connected with that of the  index of the subgroup $\mathbb {Z}[\theta]$ in $A_K$; this group index is called the index of $\theta$. The computation of discriminant as well as  integral basis for an infinite family of algebraic number fields defined by a particular type of  irreducible polynomials is in general a difficult task. In 1900, Dedekind \cite{Ded} described an integral basis of all pure\footnote{By a pure number field, we mean a field $\Q(\theta)$ where $\theta$ is a root of an irreducible polynomial of the type $x^n-a$ over $\Z $.} cubic fields. In 1910, Westlund \cite{JW} gave an integral basis for fields of the type  $\Q(\sqrt[p]{a})$ having prime degree $p$. In 1984, Funakura \cite{Fun} determined  an integral basis of all pure quartic fields. In 2015, Hameed and Nakahara \cite{HN} obtained an integral basis of those pure octic fields $\Q(\sqrt[8]{a})$ where $a$ is a squarefree integer. In 2020, Jakhar et al  gave an explicit construction of an integral basis of all those $n$-{th} degree pure fields $\Q(\sqrt[n]{a})$ which are such that for each prime $p$ dividing $n$, either $p\nmid a$ or $p$ does not divide $v_p(a)$, where $v_p(a)$ stands for the highest power of $p$ dividing $a$; clearly this condition is satisfied when either $a,~n$ are coprime or $a$ is squarefree (cf. \cite{JKS3}, \cite{JKS1}). A different approach using $p$-integral basis defined below has been followed by A. Alaca, S. Alaca and K. S. Williams in  \cite{AA}, \cite{Al}, \cite{AW1} to construct integral bases of all cubic fields and all those quartic as well as quintic fields which are generated over $\Q$ by a root of an irreducible trinomial of the type $x^n+ax+b$ belonging to $\Z[x]$ with $n=4,~5$.

For a prime $p$, let $\mathbb{Z}_{(p)}$ denote the localisation of the ring $\Z$ of integers at the prime ideal $p\Z$ and $S_{(p)}$ the integral closure of $\Z_{(p)}$ in an algebraic number field $K$. The ring $S_{(p)}=\{\frac{\alpha}{a}|~\alpha\in A_K,~a\in \Z\setminus p\Z~\}$ is a free module over the principal ideal domain $\Z_{(p)}$ having rank equal to the degree of the extension $K/\Q$. A $\Z_{(p)}$-basis of the module  $S_{(p)}$ is called a $p$-integral basis of $K$. Clearly an integral basis of an algebraic number field is its $p$-integral basis for each prime $p$. For $K=\Q(\theta)$ with $\theta$ an algebraic integer, if a prime $p$ does not divide the index of $\theta$, then using Lagrange's theorem for finite groups, it can be easily seen that $A_K \subseteq \Z_{(p)}[\theta]$ and hence \{1,$\theta,\cdots ,\theta^{n-1} $\} is a $p$-integral basis of $K$ in this situation, $n$ being the degree of the extension $K/\Q$.

In the present paper, we consider all those  sextic fields $K=\Q(\theta)$ for which $\theta$ is a root of an irreducible trinomial $x^6+ax+b$ belonging to $\Z[x]$; for each prime $p$, we computer the highest power of $p$ dividing the discriminant of $K$ besides giving an explicit $p$-integral basis of $K$. The  $p$-integral basis denoted by  $\{\alpha_0, \alpha_1, \alpha_2, \alpha_3, \alpha_4,\alpha_5\}$ constructed in the paper  is of special type given by 
\begin{equation}\label{equa 1}
\begin{rcases}
\begin{split}
\alpha_0=1,~\alpha_1=\frac{a_0+\theta}{p^{k_1}},~ \alpha_2=\frac{b_0+b_1\theta+\theta^2}{p^{k_2}},~\alpha_3=\frac{c_0+c_1\theta+c_2\theta^2+\theta^3}{p^{k_3}},\\ \alpha_4=\frac{d_0+d_1\theta+d_2\theta^2+d_3\theta^3+\theta^4}{p^{k_4}},~\alpha_5=\frac{e_0+e_1\theta+e_2\theta^2+e_3\theta^3+e_4\theta^4+\theta^5}{p^{k_5}}
\end{split}
\end{rcases}
\end{equation}
with $a_{0},~{b_i}$'$ s,~{c_i}$' $s$,$~{d_i}$'$s,~{e_i}$'$s$ in $\Z$ and $0\leq k_i\leq k_{i+1}$   for each $i$. It turns out that the highest power of $p$ dividing the index of $\theta$ equals $k_1+k_2+k_3+k_4+k_5$. We then describe a method to obtain an integral basis of $K$ from these $p$-integral bases taking into consideration all the primes $p$ dividing the index of $\theta$ and illustrate the computation of integral basis as well as of the discriminant of $K$ by examples. (see Examples \ref{p1.3}-\ref{p1.5})

In what follows, $A_K$, $d_K$ will stand respectively for the ring of algebraic integers and the discriminant of $K=\Q(\theta)$, where $\theta$ satisfies the irreducible trinomial $f(x)=x^6+ax+b$ over $\Z$. We shall denote by $ind$ $\theta$ the index of the subgroup $\Z[\theta]$ in $A_K$. For any prime $p$, we shall denote by $\Z_{(p)}$ the localisation of $\Z$ at $p\Z$ and  by $v_p$  the $p$-adic valuation of the field $\Q $ of rational numbers defined for any non-zero integer $m$ to be the highest power of the prime $p$ dividing $m$. If  a prime $p$ is such that $p^5$ divides $a$ and $p^6$ divides $b$, then $\theta/p$ is a root of the polynomial $x^6+(a/p^5)x+(b/p^6)$ belonging to $\Z[x]$. Hence on replacing $\theta$ by $\theta/s$ for a suitable number $s$, we may assume that for any prime $p$ either $v_p(a)<5$ or $v_p(b)<6$. Throughout $D$ will stand for the discriminant of $f(x)$ and $D_2$ for the integer $D/2^{v_2(D)}$. As is well-known (cf. \cite[Exercise 4.5.4]{Es-Mu})
\begin{equation}\label{eq:p1.1}
D=5^5a^6-6^6b^5.
\end{equation}

With the above notations, we prove
\begin{theorem}\label{p1.1}
Let $K = \Q(\theta)$ be an algebraic number field with $\theta$ a root of an irreducible trinomial $f(x)=x^6+ax+b$ belonging to $\Z[x]$. Let $D$, $d_K$ denote respectively the discriminants of $f(x)$ and $K$. Then a $2$-integral basis, a $3$-integral basis, a $5$-integral basis and a $p$-integral basis for  primes $p$ greater than 5 together with the values $v_p(D)$, $v_p(d_K)$ for all primes $p\geq2$ are given in Table $\Romannum{1}$, Table $\Romannum{2}$, Table $\Romannum{3}$ and Table $\Romannum{4}$ respectively.
\end{theorem}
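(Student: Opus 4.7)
The plan is to treat each prime $p$ in isolation and then assemble the outcomes into the four tables. Since $D = [\mathrm{ind}\,\theta]^2 \, d_K$, any prime $p$ not dividing $D$ cannot divide $\mathrm{ind}\,\theta$, so for such $p$ the power basis $\{1,\theta,\theta^2,\theta^3,\theta^4,\theta^5\}$ is already $p$-integral and $v_p(d_K) = v_p(D) = 0$. Combined with the identity $D = 5^5 a^6 - 6^6 b^5$ from (\ref{eq:p1.1}) and the normalization that for every prime $p$ one has $v_p(a) < 5$ or $v_p(b) < 6$, the genuine work is confined to primes $p$ dividing $\gcd(a,b)$, the small primes $p \in \{2,3,5\}$, and primes $p > 5$ dividing $D$. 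This produces the four-table structure.

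For each such prime I would run a triangular, recursive construction: having obtained $\alpha_0,\ldots,\alpha_{i-1}$ of the form prescribed in (\ref{equa 1}), seek the largest $k_i \geq k_{i-1}$ such that some monic $g_i(x) \in \Z[x]$ of degree $i$ makes $\alpha_i = g_i(\theta)/p^{k_i}$ lie in the localisation $S_{(p)}$. Integrality can be certified in any one of several equivalent ways: by verifying that the characteristic polynomial of $\alpha_i$ has $p$-adically integral coefficients; by applying Ore's theorem to the Newton polygon of $f(x - x_0)$ for a suitable $x_0 \in \Z_p$ lifting a repeated root of $\bar f$ modulo $p$; or, at the first step, by Dedekind's criterion. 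Once the $k_i$'s are identified, we conclude
\[
v_p(\mathrm{ind}\,\theta) \;=\; k_1 + k_2 + k_3 + k_4 + k_5, \qquad v_p(d_K) \;=\; v_p(D) - 2\sum_{i=1}^{5} k_i,
\]
the value $v_p(D)$ itself being read directly from (\ref{eq:p1.1}) in each sub-case.

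The main obstacle, and the reason the theorem requires four distinct tables, is the sheer extent of the case analysis for the small primes. For $p = 2$ the Newton polygon of $f$ depends not only on the pair $(v_2(a), v_2(b))$ but also on finer congruences of $a/2^{v_2(a)}$ and $b/2^{v_2(b)}$ modulo $4$ and $8$, occasionally requiring a second-order Newton polygon to resolve. Analogous but tamer branchings occur for $p = 3$ and $p = 5$: after a shift $\theta \mapsto \theta + c$, the polynomial $f$ may become Eisenstein or factor with repeated residual factors, and each configuration produces its own row of the corresponding table. For $p > 5$ the situation is uniform: the unique possible multiple root of $\bar f \pmod p$ is forced by $\gcd(\bar f, \bar f')$, and the Newton polygon of $f$ after translation has only one or two slopes, yielding a compact table.

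Correctness in each case will boil down to three verifications carried out row by row: (i) each $\alpha_i$ lies in $S_{(p)}$, shown by exhibiting a monic polynomial in $\Z_{(p)}[x]$ annihilating it; (ii) the change-of-basis matrix from $(\alpha_0,\ldots,\alpha_5)$ to $(1,\theta,\ldots,\theta^5)$ is upper triangular with diagonal entries $p^{-k_0},\ldots,p^{-k_5}$, so that the $\alpha_i$ span a $\Z_{(p)}$-lattice of index $p^{\sum k_i}$ inside $\Z_{(p)}[\theta]$; and (iii) \emph{maximality}, that no $\alpha_i$ can be replaced by an element with a strictly larger power of $p$ in the denominator while remaining integral. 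Step (iii) is the delicate one and is where an irreducibility-of-slope argument in the Newton polygon, or a final application of Dedekind-type criteria, seals the proof. The discriminant values $v_p(d_K)$ then follow immediately from the index identity displayed above.
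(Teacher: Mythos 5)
Your overall architecture --- isolate each prime, build a triangular family $\alpha_i=g_i(\theta)/p^{k_i}$, and read off $v_p(\mathrm{ind}\,\theta)=\sum k_i$ and $v_p(d_K)=v_p(D)-2\sum k_i$ --- is the same as the paper's; their Proposition 3.A makes precise that once $v_p(\mathrm{ind}\,\theta)$ is known independently, it suffices to exhibit integral elements whose denominator exponents sum to it, so individual maximality of each $k_i$ never has to be checked. The gap is in how you propose to certify the index. Your toolbox is: integrality of characteristic polynomials, Dedekind's criterion, and Ore's theorem applied to $f(x-x_0)$ for a lift $x_0$ of a repeated root of $\bar f$. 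Dedekind's criterion only decides whether $p\mid\mathrm{ind}\,\theta$, not the exact power; and Ore's Theorem of the Index gives \emph{equality} only when $f$ is $p$-regular. The paper shows that in a whole block of rows this regularity fails for every admissible lift: when $v_2(b)\in\{2,4\}$ with $v_2(a)\ge v_2(b)$ (cases E18--E26) the residual polynomial attached to the single edge of the $(x-\beta)$-Newton polygon is $Y^2+\overline{1}$ over $\F_2$ for \emph{every} $\beta$ with $v_2(\beta)>0$, and when $v_3(b)=3$, $v_3(a)\ge3$ (F25--F27) one likewise gets an inseparable residual cube. In those rows Ore yields only the inequality $v_p(\mathrm{ind}\,\theta)\ge\sum_j i_{\phi_j}(f)$, and no translation repairs it.

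What actually closes these cases in the paper is a mechanism absent from your plan: a lower bound on $v_p(d_K)$ coming from ramification. One computes $V_2(\theta)$ from the Newton polygon, shows by explicit valuation manipulations (e.g.\ $V_2(\theta^3+2)=\tfrac{3}{2}$) that the value group of a prolongation is $\tfrac{1}{6}\Z$, i.e.\ that $2$ is totally ramified, invokes the ramification--discriminant bound to get $v_2(d_K)\ge 6$, and squeezes this against the upper bound furnished by the exhibited integral elements. In cases E21 and E26 even this does not suffice, and the paper must exhaustively exclude every candidate numerator for a degree-$4$ or degree-$5$ element carrying one more factor of $2$ in the denominator, by computing $V_2$ of each of finitely many residue classes; similarly H11--H12 rest on the imported fact $v_p(d_K)\in\{0,1\}$ from Llorente--Nart--Vila together with a trace argument. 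Your step (iii) gestures at maximality via ``an irreducibility-of-slope argument or Dedekind-type criteria,'' but neither tool can perform these exclusions, so the rows E18--E26, F25--F27 and the exact congruence data in H11--H12 remain unproved under your scheme.
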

\begin{table*}
\centering
\caption{Table I}
\begin{adjustbox}{width=\textwidth}
\begin{tabular}{|l|c|c|c|l|}
\hline
Case & Conditions & $v_2(D)$ & $v_2(d_K)$ & 2-Integral Basis\\
\hline
E1 & $v_2(a)=0$  & 0 & 0 & \{1,$\theta,\theta^2,\theta^3,\theta^4,\theta^5 $\}\\
\hline
E2 & $v_2(b)=1,v_2(a)=1$  & 6 & 6 & \{1,$\theta,\theta^2,\theta^3,\theta^4,\theta^5 $\}\\
\hline
E3 & $v_2(b)=1,v_2(a)\geq 2$  & 11 & 11 & \{1,$\theta,\theta^2,\theta^3,\theta^4,\theta^5 $\}\\
\hline
E4 & $v_2(b)\geq 2,v_2(a)=1$  & 6 & 4 & \{1,$\theta,\theta^2,\theta^3,\theta^4,\theta^5/2 $\}\\
\hline
E5 & $v_2(b)\geq 3,v_2(a)=2$  & 12 & 4 & \{1,$\theta,\theta^2,\theta^3/2,\theta^4/2,\theta^5/2^2 $\}\\
\hline
E6 & $v_2(b)=3,v_2(a)=3$  & 18 & 6 & \{1,$\theta,\theta^2/2,\theta^3/2,\theta^4/2^2,\theta^5/2^2 $\}\\
\hline
E7 & $v_2(b)=3,v_2(a)\geq 4$  & 21 & 9 & \{1,$\theta,\theta^2/2,\theta^3/2,\theta^4/2^2,\theta^5/2^2 $\}\\
\hline
E8 & $v_2(b)\geq 4,v_2(a)=3$  & 18 & 4 & \{1,$\theta,\theta^2/2,\theta^3/2,\theta^4/2^2,\theta^5/2^3 $\}\\
\hline
E9 & $v_2(b)\geq 5,v_2(a)=4$  & 24 & 4 & \{1,$\theta,\theta^2/2,\theta^3/2^2,\theta^4/2^3,\theta^5/2^4 $\}\\
\hline
E10 & $v_2(b)= 5,v_2(a)=5$  & 30 & 10 & \{1,$\theta,\theta^2/2,\theta^3/2^2,\theta^4/2^3,\theta^5/2^4 $\}\\
\hline
E11 & $v_2(b)=5,v_2(a)\geq 6$  & 31 & 11 & \{1,$\theta,\theta^2/2,\theta^3/2^2,\theta^4/2^3,\theta^5/2^4 $\}\\
\hline
E12 & $v_2(a)=1$  & 7 & 7 & \{1,$\theta,\theta^2,\theta^3,\theta^4,\theta^5 $\}\\
&$b\equiv3~(mod~4)$ & & &\\
\hline
E13 & $v_2(a)=1$  & $v_2(D)\geq 9$ & 7 & \{1,$\theta,\theta^2,\theta^3,\theta^4,$\\
&$b\equiv1~(mod~4)$ & odd& &$(-5x_0^5+x_0^4\theta+x_0^3\theta^2+x_0^2\theta^3+x_0\theta^4+\theta^5)/2^{\frac{v_2(D)-7}{2}} $\},\\
& $v_2(D) $  odd & & & $5(\frac{a}{2})x_0+3b\equiv 0~(mod~2^{\frac{ v_2(D)-7}{2}})$\\
\hline

E14 & $v_2(a)=1$  & $v_2(D)\geq 8$ & 4 & \{1,$\theta,\theta^2,\theta^3,\theta^4,$\\
&$b\equiv1~(mod~4),$ & even& &$(-5x_1^5+x_1^4\theta+x_1^3\theta^2+x_1^2\theta^3+x_1\theta^4+\theta^5)/2^{\frac{v_2(D)-4}{2}} $\},\\
& $v_2(D) $  even,  & & &   $5(\frac{a}{2})x_1-2^u+3b\equiv 0~(mod~2^{\frac{v_2(D)-4}{2}}),$\\ 
&$ D_2\equiv1~(mod~4)$& & & $u={\frac{v_2(D)-6}{2}}$\\
\hline
E15 & $v_2(a)=1$  & $v_2(D)\geq 8$ & 6 & \{1,$\theta,\theta^2,\theta^3,\theta^4,$\\
&$b\equiv1~(mod~4),$ & even& &$(-5x_2^5+x_2^4\theta+x_2^3\theta^2+x_2^2\theta^3+x_2\theta^4+\theta^5)/2^{\frac{v_2(D)-6}{2}} $\},\\
&$v_2(D) $  even,  & & &  $5(\frac{a}{2})x_2+3b\equiv 0~(mod~2^{\frac{v_2(D)-6}{2}})$ \\ 
&$ D_2\equiv3~(mod~4)$ & & &
\\

\hline
E16 & $v_2(a)\geq 2$  & 6 & 6 & \{1,$\theta,\theta^2,\theta^3,\theta^4,\theta^5 $\}\\
&$b\equiv1~(mod~4)$ & & &\\
\hline
E17 & $v_2(a)\geq 2$  & 6 & 0 & \{1,$\theta,\theta^2,(1+\theta^3)/2,(\theta+\theta^4)/2,(\theta^2+\theta^5)/2 $\}\\
&$b\equiv3~(mod~4)$ & & &\\
\hline
E18 & $v_2(b)=2,v_2(a)=2$  & 12 & 6 & \{1,$\theta,\theta^2,\theta^3/2,\theta^4/2,\theta^5/2 $\}\\
\hline
E19 & $v_2(b)=2,v_2(a)=3 $  & 16 & 6 & \{1,$\theta,\theta^2,\theta^3/2,(2\theta+\theta^4)/2^2,(2\theta^2+\theta^5)/2^2 $\}\\
&$\frac{b}{4}\equiv3~(mod~4)$ & & &\\
\hline
E20 & $v_2(b)=2,v_2(a)\geq 4 $  & 16 & 4 & \{1,$\theta,\theta^2,(2+\theta^3)/2^2,(2\theta+\theta^4)/2^2,(2\theta^2+\theta^5)/2^2 $\}\\
&$\frac{b}{4}\equiv3~(mod~4)$ & & &\\
\hline
E21 & $v_2(b)=2,v_2(a)\geq 3 $  & 16 & 8 & \{1,$\theta,\theta^2,\theta^3/2,\theta^4/2,(2\theta^2+\theta^5)/2^2 $\}\\
&$\frac{b}{4}\equiv1~(mod~4)$ & & &\\
\hline
E22 & $v_2(b)=4,v_2(a)=4 $  & 24 & 4 & \{1,$\theta,\theta^2/2,\theta^3/2^2,(4\theta+\theta^4)/2^3,$\\
&$\frac{b}{16}\equiv1~(mod~4)$ & & &$(8\theta+4\theta^2+\theta^5)/2^4 $\}\\
\hline
E23 & $v_2(b)=4,v_2(a)=4 $  & 24 & 6 & \{1,$\theta,\theta^2/2,\theta^3/2^2,(4\theta+\theta^4)/2^3,(4\theta^2+\theta^5)/2^3 \}$\\
&$\frac{b}{16}\equiv3~(mod~4)$ & & &\\
\hline
E24 & $v_2(b)=4,v_2(a)=5 $  & 26 & 6 & \{1,$\theta,\theta^2/2,\theta^3/2^2,(4\theta+\theta^4)/2^3,(4\theta^2+\theta^5)/2^4 \}$\\
&$\frac{b}{16}\equiv3~(mod~4)$ & & &\\
\hline
E25 & $v_2(b)=4,v_2(a)\geq 6 $  & 26 & 4 & \{1,$\theta,\theta^2/2,(4+\theta^3)/2^3,(4\theta+\theta^4)/2^3,(4\theta^2+\theta^5)/2^4 \}$\\
&$\frac{b}{16}\equiv3~(mod~4)$ & & &\\
\hline
E26 & $v_2(b)=4,v_2(a)\geq 5 $  & 26 & 8 & \{1,$\theta,\theta^2/2,\theta^3/2^2,(4\theta+\theta^4)/2^3,\theta^5/2^3 \}$\\
&$\frac{b}{16}\equiv1~(mod~4)$ & & &\\
\hline
\end{tabular}}
\end{adjustbox}
\end{table*}

\begin{table*}
\centering
\caption{Table II}
\begin{adjustbox}{width=\textwidth}
\begin{tabular}{|l|c|c|c|l|}
\hline
Case & Conditions & $v_3(D)$ & $v_3(d_K)$ & 3-Integral Basis\\
\hline
F1 & $v_3(a)=0$  & 0 & 0 & \{1,$\theta,\theta^2,\theta^3,\theta^4,\theta^5 $\}\\
\hline
F2 & $v_3(b)=1,v_3(a)=1$  & 6 & 6 & \{1,$\theta,\theta^2,\theta^3,\theta^4,\theta^5 $\}\\
\hline
F3 & $v_3(b)=1,v_3(a)\geq 2$  & 11 & 11 & \{1,$\theta,\theta^2,\theta^3,\theta^4,\theta^5 $\}\\
\hline
F4 & $v_3(b)\geq 2,v_3(a)=1$  & 6 & 4 & \{1,$\theta,\theta^2,\theta^3,\theta^4,\theta^5/3 $\}\\
\hline
F5 & $v_3(b)=2,v_3(a)=2$  & 12 & 6 & \{1,$\theta,\theta^2,\theta^3/3,\theta^4/3,\theta^5/3 $\}\\
\hline
F6 & $v_3(b)=2,v_3(a)\geq 3$  & 16 & 10 & \{1,$\theta,\theta^2,\theta^3/3,\theta^4/3,\theta^5/3 $\}\\
\hline
F7 & $v_3(b)\geq 3,v_3(a)=2$  & 12 & 4 & \{1,$\theta,\theta^2,\theta^3/3,\theta^4/3,\theta^5/3^2

 $\}\\
\hline
F8 & $v_3(b)\geq 4,v_3(a)=3$  & 18 & 4 & \{1,$\theta,\theta^2/3,\theta^3/3,\theta^4/3^2,\theta^5/3^3 $\}\\
\hline
F9 & $v_3(b)=4,v_3(a)=4$  & 24 & 8 & \{1,$\theta,\theta^2/3,\theta^3/3^2,\theta^4/3^2,\theta^5/3^3 $\}\\
\hline
F10 & $v_3(b)=4,v_3(a)\geq 5$  & 26 & 10& \{1,$\theta,\theta^2/3,\theta^3/3^2,\theta^4/3^2,\theta^5/3^3 $\}\\
\hline
F11 & $v_3(b)\geq 5,v_3(a)=4$  & 24 & 4 & \{1,$\theta,\theta^2/3,\theta^3/3^2,\theta^4/3^3,\theta^5/3^4 $\}\\
\hline
F12 & $v_3(b)= 5,v_3(a)=5$  & 30 & 10 & \{1,$\theta,\theta^2/3,\theta^3/3^2,\theta^4/3^3,\theta^5/3^4 $\}\\
\hline
F13 & $v_3(b)= 5,v_3(a)\geq 6$  & 31 & 11 & \{1,$\theta,\theta^2/3,\theta^3/3^2,\theta^4/3^3,\theta^5/3^4 $\}\\
\hline
F14 & $v_3(a)=1$  & 6 & 6 & \{1,$\theta,\theta^2,\theta^3,\theta^4,\theta^5 $\}\\
&$b\equiv1~(mod~3)$ & & &\\
\hline
F15 & $v_3(a)\geq 2$  & 6 & 6 & \{1,$\theta,\theta^2,\theta^3,\theta^4,\theta^5 $\}\\
&$b\equiv4~or~7~(mod~9)$ & & &\\
&&&&\\
\hline
F16 & $v_3(a)\geq 2$  & 6 & 2 & \{1,$\theta,\theta^2,\theta^3,(1-\theta^2+\theta^4)/3,$\\
&$b\equiv1~(mod~9)$ & & &$(\theta-\theta^3+\theta^5)/3 $\}\\
\hline
F17 & $v_3(a)\geq 2$  & 6 & 6 & \{1,$\theta,\theta^2,\theta^3,\theta^4,\theta^5 $\}\\
&$b\equiv2 ~or~ 5~(mod~9)$ & & &\\
&&&&\\
\hline
F18 & $v_3(a)\geq 2$  & 6 & 2 & \{1,$\theta,\theta^2,\theta^3,(1+\theta^2+\theta^4)/3,$\\
&$b\equiv-1~(mod~9)$ & & &$(\theta+\theta^3+\theta^5)/3 $\}\\
\hline
F19 & $b\equiv2~(mod~9)$  & 7 & 5 & \{1,$\theta,\theta^2,\theta^3,\theta^4,$\\
& $a\equiv\pm 3~(mod~9)$& & & $(\epsilon+\theta+\epsilon\theta^2+\theta^3+\epsilon\theta^4+\theta^5)/3 $\},\\
&&& & $\epsilon$ is $-1$ or $1$ according as $a\equiv 3$ or $-3~(mod~9)$\\
\hline

F20 & $b\equiv-1~(mod~9)$  & 7 & 7 & \{1,$\theta,\theta^2,\theta^3,\theta^4,\theta^5\}$\\
& $a\equiv\pm3~(mod~9)$& & &\\
\hline
F21 & $b\equiv5~(mod~9)$  & 8 & 6 & \{1,$\theta,\theta^2,\theta^3,\theta^4,$\\
& $a\equiv\pm 3~(mod~9)$& & & $(\epsilon+\theta+\epsilon\theta^2+\theta^3+\epsilon\theta^4+\theta^5)/3 $\},\\
&$v_3(D)$ $=$ 8 & & &  $\epsilon$ is $-1$ or $1$ according as $a\equiv -3$ or $3~(mod~9)$ \\
\hline
F22 & $b\equiv5~(mod~9)$  & 9 & 3 & \{1,$\theta,\theta^2,\theta^3,$\\
& $a\equiv\pm 3~(mod~9)$& & & $(-1-\theta+\theta^3+\theta^4)/3$\\
&$v_3(D) =$ 9& & &$(-5x_1^5+x_1^4\theta+x_1^3\theta^2+x_1^2\theta^3+x_1\theta^4+\theta^5)/3^2 $\},\\
 & & & &   $5(\frac{a}{3})x_1+2b\equiv 0~(mod~9)$\\
\hline
F23 & $b\equiv5~(mod~9)$  & $v_3(D)\geq 10$ & 4 & \{1,$\theta,\theta^2,\theta^3,$\\
& $a\equiv\pm 3~(mod~9)$& even & & $(-1-\theta+\theta^3+\theta^4)/3$\\
&$v_3(D)\geq10$ and even & & &$(-5x_2^5+x_2^4\theta+x_2^3\theta^2+x_2^2\theta^3+x_2\theta^4+\theta^5)/3^{\frac{v_3(D)-6}{2}} $\},\\
& & & &  $5(\frac{a}{3})x_2+2b\equiv 0~(mod~3^{{\frac{v_3(D)-6}{2}}})$\\
\hline
F24 & $b\equiv5~(mod~9)$  & $v_3(D)\geq 11$ & 3 & \{1,$\theta,\theta^2,\theta^3,$\\
& $a\equiv\pm 3~(mod~9)$& odd& & $(-1-\theta+\theta^3+\theta^4)/3$\\
&$v_3(D)\geq11$ and odd & & &$(-5x_3^5+x_3^4\theta+x_3^3\theta^2+x_3^2\theta^3+x_3\theta^4+\theta^5)/3^{\frac{v_3(D)-5}{2}}$\},\\
& & & &  $5(\frac{a}{3})x_3+2b\equiv 0~(mod~3^{{\frac{v_3(D)-5}{2}}})$\\
\hline

F25 & $v_3(b)=3,v_3(a)=3$  & 18 & 6 & \{1,$\theta,\theta^2/3,\theta^3/3,\theta^4/3^2,\theta^5/3 ^2 $\}\\
\hline
F26 & $v_3(b)=3,v_3(a)\geq 4$  & 21 & 7 & \{1,$\theta,\theta^2/3,\theta^3/3,\theta^4/3^2,$\\
&$B=\frac{b}{3^3}$, $v_3(B^3-B)=1$ &  & &$(\theta^2+3B)^2\theta/3 ^3 $\}\\
\hline
F27 & $v_3(b)=3,v_3(a)\geq 4$  & 21 & 3 & \{1,$\theta,\theta^2/3,(\theta^2+3B)\theta/3 ^2,$\\
&$B=\frac{b}{3^3}$, $v_3(B^3-B)\geq2$ &  & &$(\theta^2+3B)^2/3 ^3,(\theta^2+3B)^2\theta/3 ^3 $\}\\
\hline

\end{tabular}
\end{adjustbox}
\end{table*}
\begin{table*}
\centering
\caption{Table III}
\begin{adjustbox}{width=\textwidth}
\begin{tabular}{|l|c|c|c|c|}
\hline
Case & Conditions & $v_5(D)$ & $v_5(d_K)$ & 5-Integral Basis\\
\hline
G1 & $v_5(b)=0$  & 0 & 0 & \{1,$\theta,\theta^2,\theta^3,\theta^4,\theta^5 $\}\\
\hline
G2 & $v_5(b)=1,v_5(a)=0$, & & & \\
 & $a^4\not\equiv 21$ $ mod$ $ 25$, & 5 & 5 & \{1,$\theta,\theta^2,\theta^3,\theta^4,\theta^5 $\}\\
  & $b \not\equiv a^2-a^6$ $mod$ $ 25$, & & & \\
& $a^2\not\equiv( b/5)$ $mod$ $5$ & & & \\
\hline  
G3 & $v_5(b)=1,v_5(a)=0$, & & & \\
 & $a^4\not\equiv 21$ $ mod$ $ 25$, & 6 & 6 & \{1,$\theta,\theta^2,\theta^3,\theta^4,\theta^5 $\}\\
  & $b \not\equiv a^2-a^6$ $mod$ $ 25$, & & & \\
& $a^2\equiv( b/5)$ $mod$ $5$ & & & \\
\hline 
G4 & $v_5(b)=1,v_5(a)=0$, & & &\{1,$\theta,\theta^2,\theta^3,\theta^4,$ \\
 & $a^4\not\equiv 21$ $ mod$ $ 25$, & 5 & 3 & $(\theta-a^3\theta^2+a^2\theta^3-a\theta^4+\theta^5)/5 $\}\\
  & $b \equiv a^2-a^6$ $mod$ $ 25$, & & &  \\
\hline
G5 & $v_5(b)=1,v_5(a)=0$, & & & \\
 & $a^4\equiv 21$ $ mod$ $ 25$, & 5 & 5 & \{1,$\theta,\theta^2,\theta^3,\theta^4,\theta^5 $\}\\
  & $b \not\equiv a^2-a^6$ $mod$ $ 25$ & & & \\
  \hline 
G6 & $v_5(b)=1,v_5(a)=0$, & & &\{1,$\theta,\theta^2,\theta^3,$ \\
 & $a^4\equiv 21$ $ mod$ $ 25$, & $v_5(D)\geq 7$ & 3 & $(-4a^3\theta+3a^2\theta^2-2a\theta^3+\theta^4)/5, $\\
  & $b \equiv a^2-a^6$ $mod$ $ 25$, & & &$(-5x_0^5+x_0^4\theta+x_0^3\theta^2+x_0^2\theta^3+x_0\theta^4+\theta^5)/5\frac{v_5(D)-5}{2}$\},  \\
   & $v_5(D)$ is odd & & &${a}x_0+6(\frac{b}{5})\equiv 0~(mod~5^{\frac{v_5(D)-5}{2}})$ \\
   \hline
G7 & $v_5(b)=1,v_5(a)=0$, & & &\{1,$\theta,\theta^2,\theta^3,$ \\
    & $a^4\equiv 21$ $ mod$ $ 25$, & $v_5(D)\geq 8$ & 2 & $(-4a^3\theta+3a^2\theta^2-2a\theta^3+\theta^4)/5, $\\
     & $b \equiv a^2-a^6$ $mod$ $ 25$, & & &$(-5x_1^5+x_1^4\theta+x_1^3\theta^2+x_1^2\theta^3+x_1\theta^4+\theta^5)/5\frac{v_5(D)-4}{2}$\},  \\
      & $v_5(D)$ is even & & & ${a}x_1+6(\frac{b}{5})\equiv 0~(mod~5^{\frac{v_5(D)-4}{2}})$\\
      \hline
G8 & $v_5(b)=1,v_5(a)\geq 1$  & 5 & 5 & \{1,$\theta,\theta^2,\theta^3,\theta^4,\theta^5 $\}\\
\hline
G9 & $v_5(b)\geq 2,v_5(a)=0$  & 5 & 5 & \{1,$\theta,\theta^2,\theta^3,\theta^4,\theta^5 $\}\\
 & $a^4 \not\equiv 1$ $mod$ $25$ & & & \\
\hline
G10 & $v_5(b)\geq 2,v_5(a)=0$, &5 &3 &\{1,$\theta,\theta^2,\theta^3,\theta^4,$ \\
 & $a^4\equiv 1$ $ mod$ $ 25$, &  &  & $(\theta-a^3\theta^2+a^2\theta^3-a\theta^4+\theta^5)/5 $\}\\
\hline 
G11 & $v_5(b)=2,v_5(a)= 1$ & 10 & 8 &  \{1,$\theta,\theta^2,\theta^3,\theta^4,\theta^5/5 $\}\\
\hline
G12 & $v_5(b)=2,v_5(a)\geq 2$ & 10 & 4 & \{1,$\theta,\theta^2,\theta^3/5,\theta^4/5,\theta^5/5 $\}\\
\hline
G13 & $v_5(b)\geq 3,v_5(a)=1$ & 11 & 9 & \{1,$\theta,\theta^2,\theta^3,\theta^4,\theta^5/5 $\}\\
\hline
G14 &  $v_5(b)=3,v_5(a)=2$ & 15 & 7 & \{1,$\theta,\theta^2,\theta^3/5,\theta^4/5,\theta^5/5^2 $\}\\
\hline
G15 & $v_5(b)=3,v_5(a)\geq 3$ & 15 & 3 & \{1,$\theta,\theta^2/5,\theta^3/5,\theta^4/5^2,\theta^5/5^2 $\}\\
\hline
G16 & $v_5(b)\geq 4,v_5(a)=2$ & 17 & 9 & \{1,$\theta,\theta^2,\theta^3/5,\theta^4/5,\theta^5/5^2 $\}\\
\hline
G17 & $v_5(b)=4,v_5(a)=3$ & 20 & 6 & \{1,$\theta,\theta^2/5,\theta^3/5,\theta^4/5^2,\theta^5/5^3 $\}\\
\hline
G18 &  $v_5(b)=4,v_5(a)\geq 4$ & 20 & 4 & \{1,$\theta,\theta^2/5,\theta^3/5^2,\theta^4/5^2,\theta^5/5^3 $\}\\
\hline
G19 & $v_5(b)\geq5,v_5(a)=3$ & 23 & 9 & \{1,$\theta,\theta^2/5,\theta^3/5,\theta^4/5^2,\theta^5/5^3 $\}\\
\hline
G20 & $v_5(b)=5,v_5(a)=4$ & 25 & 5 & \{1,$\theta,\theta^2/5,\theta^3/5^2,\theta^4/5^3,\theta^5/5^4 $\}\\
\hline
G21 & $v_5(b)=5,v_5(a)\geq5$ & 25 & 5 & \{1,$\theta,\theta^2/5,\theta^3/5^2,\theta^4/5^3,\theta^5/5^4 $\}\\
\hline
G22 & $v_5(b)\geq 6,v_5(a)=4$ & 29 & 9 & \{1,$\theta,\theta^2/5,\theta^3/5^2,\theta^4/5^3,\theta^5/5^4 $\}\\
\hline
\end{tabular}
\end{adjustbox}
\end{table*}
\begin{table*}
\centering
\caption{Table IV}
\label{Table IV}
\begin{adjustbox}{width=\textwidth}
\begin{tabular}{|l|c|c|c|c|}
\hline
Case & Conditions & $v_p(D)$ & $v_p(d_K)$ & $p$-Integral Basis\\
\hline
H1 & $v_p(b)=0,v_p(a)\geq 1$ or & 0 & 0 & \{1,$\theta,\theta^2,\theta^3,\theta^4,\theta^5 $\}\\
   &   $v_p(a)=0,v_p(b)\geq 1$ &  &  &  \\
\hline
H2 & $v_p(b)=1,v_p(a)\geq 1$ & 5 & 5 &  \{1,$\theta,\theta^2,\theta^3,\theta^4,\theta^5 $\}\\
\hline
H3 & $v_p(a)=1,v_p(b)\geq 2$ & 6 & 4 & \{1,$\theta,\theta^2,\theta^3,\theta^4,\theta^5/p $\}\\
\hline
H4 & $v_p(b)=2,v_p(a)\geq 2$ & 10 & 4 & \{1,$\theta,\theta^2,\theta^3/p,\theta^4/p,\theta^5/p $\}\\
\hline
H5 &  $v_p(a)=2,v_p(b)\geq 3$ & 12 & 4 & \{1,$\theta,\theta^2,\theta^3/p,\theta^4/p,\theta^5/p^2 $\}\\
\hline
H6 & $v_p(b)=3,v_p(a)\geq 3$ & 15 & 3 & \{1,$\theta,\theta^2/p,\theta^3/p,\theta^4/p^2,\theta^5/p^2 $\}\\
\hline
H7 & $v_p(a)=3,v_p(b)\geq 4$ & 18 & 4 & \{1,$\theta,\theta^2/p,\theta^3/p,\theta^4/p^2,\theta^5/p^3 $\}\\
\hline
H8 & $v_p(b)=4,v_p(a)\geq 4$ & 20 & 4 & \{1,$\theta,\theta^2/p,\theta^3/p^2,\theta^4/p^2,\theta^5/p^3 $\}\\
\hline
H9 &  $v_p(a)=4,v_p(b)\geq 5$ & 24 & 4 & \{1,$\theta,\theta^2/p,\theta^3/p^2,\theta^4/p^3,\theta^5/p^4 $\}\\
\hline
H10 & $v_p(b)=5,v_p(a)\geq 5$ & 25 & 5 & \{1,$\theta,\theta^2/p,\theta^3/p^2,\theta^4/p^3,\theta^5/p^4 $\}\\
\hline
H11 & $v_p(ab)=0$,& $v_p(D) $  & 0 & \{{1,$\theta,\theta^2,\theta^3,\theta^4,$}\\
 & $v_p(D) $ is even  & & & $({x+y\theta+z\theta^2+v\theta^3+w\theta^4+\theta^5})/p^{m}\}$, \\
 &&&& $m={v_p(D)/2}$,\\
 & & & &$6x\equiv 5a~(mod~p^m)$, \\
 &&&&  $(5a)^4y\equiv(6b)^4~(mod~p^m)$,\\
 & && &$(5a)^3z\equiv-(6b)^3~(mod~p^m),$ \\
 & & &&$(5a)^2v\equiv(6b)^2~(mod~p^m)$, \\
  &&& &            $5aw\equiv-6b~(mod~p^m)$.\\
\hline
H12 &  $v_p(ab)=0$,  & $v_p(D) $  & 1 & \{1,$\theta,\theta^2,\theta^3,\theta^4$, \\
 & $v_p(D) $ is odd & & & $(x+y\theta+z\theta^2+v\theta^3+w\theta^4+\theta^5)/p^{m}\}$,\\
 &&&&$ m={(v_p(D)-1)/2}$,\\
  & & & &$6x\equiv 5a~(mod~p^m)$,\\
  &&& &$(5a)^4y\equiv(6b)^4~(mod~p^m)$ ,\\
  & && & $(5a)^3z\equiv-(6b)^3~(mod~p^m),$\\
  & & && $(5a)^2v\equiv(6b)^2~(mod~p^m)$,\\
   &&& & $5aw\equiv-6b~(mod~p^m)$           .\\
\hline

\end{tabular}
\end{adjustbox}
\end{table*}

\newpage
Note that with $K=\Q(\theta)$ as in Theorem \ref{p1.1}, the $p$-integral basis of $K$ mentioned in Tables I-IV in each case  consists of elements $\alpha_0, \alpha_1, \alpha_2, \alpha_3, \alpha_4,\alpha_5$, where $\alpha_i$'$s$ are  of the type  as in $(\ref{equa 1})$.
In general if $L=\Q(\xi)$ is an algebraic number field of degree $n$ with $\xi$ an algebraic integer, then it is well known that there exists an integral basis $\mathcal B:=\{\beta_0,\cdots ,\beta_{n-1}\}$ of $L$ such that $\beta_0=1$, $$\beta_i=\frac{a_{i0}+a_{i1}\xi+\cdots +a_{ii-1}\xi^{i-1}+\xi^i}{d_i}$$ with $a_{ij},~d_i\in \Z$ and the positive integer $d_i$ dividing $d_{i+1}$ for $1\leq i\leq n-1$; morever the numbers $d_i$ are uniquely determined by $\xi$ and the index  $[A_L : \Z[\xi]]$ to be denoted by $ind~\xi$ equals $\displaystyle\prod_{i=1}^{n-1}d_i$  (cf. \cite[Chapter 2, Theorem 13]{Mar}). Fix a prime $p$ and let $l_i$ stand for $v_p(d_i)$, then $v_p(ind~\xi)=l_1+l_2+\cdots +l_{n-1}.$ Since $\mathcal B$ is a $p$-integral basis of $L$, so is $\mathcal B_p^*:=\{1,\frac{\beta_1d_1}{p^{l_1}},\cdots ,\frac{\beta_{n-1}d_{n-1}}{p^{l_{n-1}}}\}$. It can be easily seen that if $\mathcal C_p=\{1,\gamma_1,\cdots ,\gamma_{n-1}\}$ is another $p$-integral basis of $L$ where $\gamma_i$'$s$ are of the type $$\gamma_i=\frac{c_{i0}+c_{i1}\xi^i+\cdots +c_{ii-1}\xi^{i-1}+\xi^i}{p^{k_i}} $$ with $c_{ij},~k_i$ in $ \Z$ for $1\leq i\leq n-1$, then on writing each member of $\mathcal B_p^*$ as a $\Z_{(p)}$-linear combination of members of $\mathcal C_p$ and vice versa, we see that $l_i=k_i~ \forall~ i$ and hence 
\begin{equation}\label{eqn p2}
v_p(ind~\xi)=l_1+l_2+\cdots +l_{n-1}=k_1+k_2+\cdots +k_{n-1}.
\end{equation}In fact  a $p$-integral basis of an algebraic number field $L=\Q(\xi)$ of the type given by $\mathcal C_p$ is easier to construct than constructing an integral basis of $L$ and these $p$-integral bases of $L$ with $p$ running over all primes dividing $ind~\xi$, quickly lead to the construction of an integral basis of $L$ as asserted by the following theorem. 

\begin{theorem}\label{p1.2}
Let $L=\Q(\xi)$ be an algebraic number field of degree $n$ with $\xi$ an algebraic integer. Assume that $A_L\neq \Z[\xi]$. Let $p_1,\cdots, ~p_s $ be all the distinct primes dividing  $[A_L : \Z[\xi]]$. Let $\mathcal{B}_r=\{\alpha_{r0},\alpha_{r1},\cdots,\alpha_{r(n-1)}\}$ be a $p_r$-integral basis of $L$, $1 \leq r\leq s $ with $\alpha_{r0}=1$, $\alpha_{ri}=\frac{c_{i0}^{(r)}+c_{i1}^{(r)}\xi+\cdots+c_{ii-1}^{(r)}\xi^{i-1}+\xi^i}{p^{k_{i,r}}}$, $1\leq i \leq n-1$, where $c_{ij}^{(r)}$ and $ 0\leq k_{i,r}\leq k_{i+1,r}$ are integers. If  $c_{ij} \in \Z$ are such that $c_{ij} \equiv c_{ij}^{(r)}(mod$ $p_r^{k_{i,r}})$ for $1 \leq r\leq s $ and if $t_i$ stands for $\displaystyle \prod_{r=1}^{s} p_r^{k_{i,r}}$, then $\{\alpha_0,\alpha_1,\cdots,\alpha_{n-1}\}$ is an integral basis of $L$ where $\alpha_0=1$, $\alpha_i=\frac{c_{i0}+c_{i1}\xi+\cdots+c_{ii-1}\xi^{i-1}+\xi^i}{t_{i}}$ for $1\leq i\leq n-1$.\\
\end{theorem}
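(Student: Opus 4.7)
The plan is to verify that $M := \Z\alpha_0 + \Z\alpha_1 + \cdots + \Z\alpha_{n-1}$ coincides with $A_L$ in two stages. First, show that each $\alpha_i$ lies in $A_L$ by checking integrality locally at every prime; then show the equality $M = A_L$ by matching the indices $[M : \Z[\xi]]$ and $[A_L : \Z[\xi]] = ind~\xi$. The second step will rely on equation (\ref{eqn p2}) applied to each of the given $p_r$-integral bases $\mathcal B_r$.

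For the first step, I would use the standard identity $A_L = \bigcap_p S_{(p)}$. If $p \notin \{p_1,\ldots,p_s\}$, then $t_i$ is a unit of $\Z_{(p)}$ (its prime factors all lie in $\{p_1,\ldots,p_s\}$), so the numerator of $\alpha_i$, being in $\Z[\xi]$, places $\alpha_i \in \Z_{(p)}[\xi] \subseteq S_{(p)}$. If $p = p_r$, I would factor $t_i = p_r^{k_{i,r}} m_{i,r}$ with $\gcd(m_{i,r}, p_r) = 1$. The hypothesis $c_{ij} \equiv c_{ij}^{(r)} \pmod{p_r^{k_{i,r}}}$ then allows me to write
$$\frac{c_{i0} + c_{i1}\xi + \cdots + c_{ii-1}\xi^{i-1} + \xi^i}{p_r^{k_{i,r}}} = \alpha_{ri} + \eta_{i,r}$$
with $\eta_{i,r} \in \Z[\xi]$, whence $\alpha_i = m_{i,r}^{-1}(\alpha_{ri} + \eta_{i,r}) \in S_{(p_r)}$, since $m_{i,r}$ is a unit of $\Z_{(p_r)}$ and $\alpha_{ri} \in A_L$.

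For the second step, the identity $t_i \alpha_i = c_{i0} + c_{i1}\xi + \cdots + c_{ii-1}\xi^{i-1} + \xi^i$ shows inductively that each $\xi^i$ is a $\Z$-linear combination of $\alpha_0, \ldots, \alpha_i$, so $\Z[\xi] \subseteq M$, and the change-of-basis matrix from $\{\alpha_0,\ldots,\alpha_{n-1}\}$ to $\{1,\xi,\ldots,\xi^{n-1}\}$ is upper triangular with diagonal $(1, t_1, \ldots, t_{n-1})$. This yields $[M : \Z[\xi]] = \prod_{i=1}^{n-1} t_i$. On the other hand, applying equation (\ref{eqn p2}) to each $\mathcal B_r$ gives $v_{p_r}(ind~\xi) = \sum_{i=1}^{n-1} k_{i,r}$, while $v_p(ind~\xi) = 0$ for every prime $p$ outside $\{p_1,\ldots,p_s\}$; consequently
$$ind~\xi = \prod_{r=1}^{s} p_r^{\sum_{i} k_{i,r}} = \prod_{i=1}^{n-1} \prod_{r=1}^{s} p_r^{k_{i,r}} = \prod_{i=1}^{n-1} t_i = [M : \Z[\xi]].$$
Combined with the inclusions $\Z[\xi] \subseteq M \subseteq A_L$, this forces $M = A_L$, proving the theorem.

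I expect the first step to be the main obstacle, not because it is technically deep, but because it requires careful bookkeeping: one must observe that the prescribed congruence on the $c_{ij}$ is precisely what is needed to absorb the discrepancy between the global $\alpha_i$ and the local $m_{i,r}^{-1}\alpha_{ri}$ into $\Z[\xi]$ at each prime $p_r$, while simultaneously ensuring that $t_i$ is a unit at every prime outside $\{p_1,\ldots,p_s\}$. Once this local integrality is secured, the index-matching step is purely formal, depending only on the already-established equation (\ref{eqn p2}).
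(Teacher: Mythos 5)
Your proposal is correct and follows essentially the same route as the paper: establish that each $\alpha_i$ lies in $S_{(p)}$ for every prime $p$ (hence in $A_L$), then compute $[M:\Z[\xi]]=\prod_i t_i$ via the triangular transition matrix and match it with $ind~\xi$ using equation (\ref{eqn p2}) to force $M=A_L$. The only difference is that you spell out the local integrality at each $p_r$ (factoring $t_i=p_r^{k_{i,r}}m_{i,r}$ and absorbing the congruence discrepancy into $\Z[\xi]$), which the paper leaves as an observation "in view of the choice of $c_{ij}$".
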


It may be pointed out that our method of proving Theorem \ref{p1.1} is completely different from the one used in \cite{AA},  \cite{Al}, \cite{AW1} for dealing with the analogous problem in cubic, quartic and quintic fields. In fact, we construct theoretical proofs without cumbersome calculations and do not use computer programming in the proof.

The result of the following corollary which is already known  in a slightly  more complicated form (cf. \cite{JKS2}) is an immediate consequence of Theorem \ref{p1.1} .
\begin{corollary}\label{p1.14}
Let $K=\Q(\theta)$ be an algebraic number field with $\theta$ satisfying an irreducible trinomial $x^6+b$ such that the integer $b$ is not divisible by the $6$-th power of any prime. For a prime $p$, let   $s_p$ stand for $6-$gcd $(6,v_p(b))$ and $b_q$ for  $\frac{b}{q^{v_{q}(b)}}$ when $q=2,~3$.
  Then the discriminant  of $K$ is  $sgn(-b)2^{r_1}3^{r_2}\displaystyle\prod_{p\neq 2,3}^{}p^{s_p},$  where
$r_1,~r_2$ are given by 
\[r_1= 
\begin{cases}
0 & \text{ if } b+1\equiv 0~(mod~4)\\
6 & \text{ if } b+1\equiv 2~(mod~4)\\
11 & \text{if} ~v_2(b)~ \text{is }~ 1~ \text{or} ~5\\
9 & \text{if} ~v_2(b)=3\\
4 &\text{if} ~v_2(b)~\text{is}~2 ~\text{or}~ 4~ \text{and}~ b_2\equiv 3~(mod~4)\\
8 &\text{if} ~v_2(b)~\text{is}~2~ \text{or}~ 4~ \text{and}~ b_2\equiv 1~(mod~4)\\
\end{cases}
\] 
\[r_2= 
\begin{cases}
2 & \text{ if } b\equiv 1~\text{or} ~-1~(mod~9)\\
6 & \text{ if } b\equiv \pm 2~\text{or}~\pm 4~(mod~9)\\
11 & \text{if} ~v_3(b)~\text{is} ~1~ \text{or}~ 5\\
10 & \text{if} ~v_3(b)~\text{is} ~2~ \text{or}~ 4\\
3 &\text{if} ~v_3(b)=3 ~ \text{and}~ (b_3)^2\equiv 1~(mod~9)\\
7 &\text{if} ~v_3(b)=3 ~ \text{and}~ (b_3)^2\not\equiv 1~(mod~9).\\
\end{cases}
\]

\end{corollary}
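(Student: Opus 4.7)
The plan is to derive this corollary as a direct specialization of Theorem \ref{p1.1} to the case $a=0$, going prime by prime through the four tables and verifying the claimed values.

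First, I would set up the basic data. Since $a=0$, formula (\ref{eq:p1.1}) gives $D=-6^{6}b^{5}$, so $\mathrm{sgn}(D)=\mathrm{sgn}(-b)$; because $d_K$ and $D$ differ by the square $(\mathrm{ind}\,\theta)^{2}$, this accounts for the overall factor $\mathrm{sgn}(-b)$ in the formula. The hypothesis that no sixth power of a prime divides $b$ guarantees $v_{p}(b)\leq 5$ for every prime $p$, and by the preliminary normalization in the introduction we may further assume $v_{p}(b)<6$, which is automatic here.

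Next, for primes $p\neq 2,3$ I would note that $v_{p}(a)=\infty$, so among the relevant cases of Tables III and IV only those requiring $v_{p}(a)$ to be large enough survive. A direct check case by case shows:
\begin{itemize}
\item $v_{p}(b)=0$: case H1 (or G1 when $p=5$), giving $v_{p}(d_{K})=0$.
\item $v_{p}(b)=1$: case H2/G8, giving $v_{p}(d_{K})=5$.
\item $v_{p}(b)=2$: case H4/G12, giving $v_{p}(d_{K})=4$.
\item $v_{p}(b)=3$: case H6/G15, giving $v_{p}(d_{K})=3$.
\item $v_{p}(b)=4$: case H8/G18, giving $v_{p}(d_{K})=4$.
\item $v_{p}(b)=5$: case H10/G21, giving $v_{p}(d_{K})=5$.
\end{itemize}
In each instance, the exponent equals $6-\gcd(6,v_{p}(b))=s_{p}$, matching the claimed product.

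For $p=2$ the same sieving through Table I isolates the cases compatible with $v_{2}(a)\geq 6$ (effectively $v_{2}(a)=\infty$): E17 when $b\equiv 3\pmod 4$; E16 when $b\equiv 1\pmod 4$; E3 and E11 when $v_{2}(b)\in\{1,5\}$; E7 when $v_{2}(b)=3$; E20 and E25 when $v_{2}(b)\in\{2,4\}$ with $b_{2}\equiv 3\pmod 4$; and E21, E26 when $v_{2}(b)\in\{2,4\}$ with $b_{2}\equiv 1\pmod 4$. Reading off the $v_{2}(d_{K})$ column reproduces the list defining $r_{1}$ exactly. The parallel check for $p=3$ in Table II picks out F15--F18 when $v_{3}(b)=0$ (separated by the residue of $b$ modulo $9$), F3 and F13 when $v_{3}(b)\in\{1,5\}$, F6 and F10 when $v_{3}(b)\in\{2,4\}$, and F26, F27 when $v_{3}(b)=3$. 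Here the only subtle point is the translation of the conditions in F26/F27: with $B=b/27=b_{3}$ one has $v_{3}(B^{3}-B)\geq 2$ iff $B^{2}\equiv 1\pmod 9$, and $v_{3}(B^{3}-B)=1$ iff $B^{2}\not\equiv 1\pmod 9$ (since $v_{3}(B)=0$), which converts the table's criterion into the corollary's condition on $(b_{3})^{2}\pmod 9$. The resulting exponents $v_{3}(d_{K})$ match the stated list for $r_{2}$.

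The proof is therefore essentially a verification: assemble $d_{K}=\mathrm{sgn}(-b)\prod_{p}p^{v_{p}(d_{K})}$, separate $p=2$, $p=3$ and $p\geq 5$, and apply the relevant row of the appropriate table. There is no real obstacle beyond the careful bookkeeping; the one spot that needs a small argument rather than a table lookup is the translation of the discriminantal condition in cases F26--F27 into the residue condition on $b_{3}$ modulo $9$, which I would isolate as a brief lemma inside the proof.
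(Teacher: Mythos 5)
Your proposal is correct and is exactly the argument the paper intends: the paper gives no written proof, stating only that the corollary is an immediate consequence of Theorem \ref{p1.1}, and your case-by-case specialization to $a=0$ (with $v_p(a)=\infty$ selecting rows E3, E7, E11, E16, E17, E20, E21, E25, E26 of Table I, F3, F6, F10, F13, F15--F18, F26, F27 of Table II, and the matching rows of Tables III--IV) reads off precisely the claimed exponents. Your translation of the condition $v_3(B^3-B)\geq 2$ in F26/F27 into $(b_3)^2\equiv 1~(mod~9)$ and the sign computation $\mathrm{sgn}(d_K)=\mathrm{sgn}(D)=\mathrm{sgn}(-b)$ are both correct, so nothing is missing.
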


\hspace{6mm}

The following examples illustrate Theorems \ref{p1.1}, \ref{p1.2}.
\begin{example}\label{p1.3}
We compute the discriminant and an integral basis of $K=\Q(\theta)$ where $\theta$ is a root of the polynomial $f(x)=x^6+12$. Here $D=discr(f(x))=-2^{16}3^{11}$. By case E20 of Table I,  \{1,$\theta,\theta^2,(2+\theta^3)/2^2,(2\theta+\theta^4)/2^2,(2\theta^2+\theta^5)/2^2 $\} is a $2$-integral basis of $K$. Further by case F3 of Table II, \{1,$\theta,\theta^2,\theta^3,\theta^4,\theta^5 $\} is a $3$-integral basis of $K$. When $p\geq 5,$ then $p \nmid D$ and so \{1,$\theta,\theta^2,\theta^3,\theta^4,\theta^5 $\} is $p$-integral basis of $K$. Therefore by Theorem \ref{p1.2},   \{1,$\theta,\theta^2,(2+\theta^3)/2^2,(2\theta+\theta^4)/2^2,(2\theta^2+\theta^5)/2^2 $\}  is an integral basis of $K$ and $d_K=\frac{D}{(ind~\theta)^2}=-2^43^{11}.$
\end{example}
\begin{example}\label{p1.4}
We determine the discriminant and an integral basis  of $K=\Q(\theta)$ where $\theta$ is a root of the polynomial $f(x)=x^6+135$. Here $D=discr(f(x))=-2^63^{21}5^5.$ By case E17 of Table I, \{1,$\theta,\theta^2,(1+\theta^3)/2,(\theta+\theta^4)/2,(\theta^2+\theta^5)/2 $\} is a $2$-integral basis of $K$. Further by case F26 of Table II, \{$1,\theta,\theta^2/3,\theta^3/3,\theta^4/3^2,(9\theta+3\theta^3+\theta^5)/3^3$\} is a $3$-integral basis of $K$. By case G8 of Table III, \{1,$\theta,\theta^2,\theta^3,\theta^4,\theta^5 $\}  is $5$-integral basis of $K$. When  $p\geq 7,$ then $p \nmid D$ and so \{1,$\theta,\theta^2,\theta^3,\theta^4,\theta^5 $\} is $p$-integral basis of $K$.  Therefore by Theorem \ref{p1.2} , \{$1,\theta,\theta^2/3,(3+\theta^3)/6, (27\theta+\theta^4)/18,(36\theta+27\theta^2+30\theta^3+\theta^5)/54$\} is an integral basis of $K$ and $d_K=\frac{D}{(ind~\theta)^2}=-3^75^5$.
\end{example}
\begin{example}\label{p1.5}
We find the discriminant and an integral basis of $K=\Q(\theta)$ where $\theta$ is a root of the polynomial $f(x)=x^6+4x+4$. In view of a simple result\footnote{Let $f(x)= a_nx^n+\cdots +a_0$ be a polynomial with coefficients in $\mathbb Z$. Suppose that there exists a prime $p$ such that $v_p(a_n)=0,v_p(a_i)/(n-i)\geq v_p(a_0)/n$ for $0 \leq i \leq n-1$. If $d$ is the gcd of $(v_p(a_0),n)$, then  each irreducible factor of $f(x)$ over $\mathbb Q$ has degree at least $\frac{n}{d}$.} proved in \cite{Ja}, each irreducible factor of $f(x)$ over $\Q$ has degree at least $3$ from which it can be easily deduced that $f(x)$ is irreducible over $\Q$. Here $D=discr(f(x))=-2^{12}\times8539.$ By case E18 of Table I, $\{1,\theta,\theta^2,\theta^3/2,\theta^4/2,\theta^5/2 \}$ is a 2-integral basis of $K$. Since 8539 is a prime, it does not divide $ind~\theta$. So \{1,$\theta,\theta^2,\theta^3,\theta^4,\theta^5 $\} is a  $8539$-integral basis of $K$. Similar is the situation for a prime $p$ not dividing $D$. So by Theorem \ref{p1.2}, $\{1,\theta,\theta^2,\theta^3/2,\theta^4/2,\theta^5/2 \}$ is an integral basis of $K$ and $d_K=\frac{D}{(ind~\theta)^2}=-2^{6}\times8539.$
\end{example}

 \section{Proof of Theorem \ref{p1.2}}
 Clearly $\alpha_i$ is integral over $\Z_{(p)}$ for all primes $p$ not belonging to the set $\{p_1,\cdots ,p_{s}\}$ for each $i$. Also in view of the choice of $c_{ij}$ we see that each $\alpha_i$ is integral over $\Z_{(p)}$ for $p$ belonging to the set $\{p_1,p_2,\cdots ,p_s\}$. So each $\alpha_i$ is integral over $\Z$. Therefore if $\Gamma$ denotes the   subgroup of $\C$ defined by $\Gamma =\Z\alpha_0+\Z\alpha_1+\cdots +\Z\alpha_{n-1}$, then $\Z[\xi]\subseteq\Gamma \subseteq A_L$. Further by virtue of a basic result (cf. \cite[Chapter 2, Section 2, Theorem 2]{Bor} )  the index of the subgroup $\Z[\xi]$ in $\Gamma$ being the absolute value of the determinant of the transition matrix from $\{\alpha_0,\alpha_1,\cdots,\alpha_{n-1}\}$ to $\{1,\xi,\cdots,\xi^{n-1}\}$ equals $\displaystyle\prod_{i=1}^{n-1}t_i$. In view of (\ref{eqn p2}), we have
 $$v_{p_{r}}(ind~\xi)=k_{1,r}+\cdots +k_{n-1,r} $$ for $1\leq r\leq s$. By hypothesis $p_1,\cdots ,p_s$ are the only primes dividing $ind~\xi$. Therefore $$ind~\xi=\displaystyle\prod_{r=1}^{s}p_r^{k_{1,r}+\cdots +k_{n-1,r} }=\displaystyle\prod_{i=1}^{n-1}(\displaystyle\prod_{r=1}^{s}p_r^{k_{i,r}})= \displaystyle\prod_{i=1}^{n-1}t_i.$$ Since $\Gamma \subseteq A_L$ and $[\Gamma:\Z[\xi]]=[A_L:\Z[\xi]]$, it follows that $A_L=\Gamma$ and hence $\{\alpha_0,\alpha_1,\cdots,\alpha_{n-1}\}$ is an integral basis of $L$.
\section{Preliminary Results}
 The following proposition to be used in the sequel follows immediately from what has been said in the paragraph preceding Theorem \ref{p1.2}.\\
 
 \noindent\textbf{Proposition 3.A.}\label{pr p1.1} Let $L=\Q(\xi)$ be an algebraic number field of degree $n$ with $\xi$ an algebraic integer and $p$ be a prime number. Let  $\alpha_1,\alpha_2,\cdots ,\alpha_{n-1}$ be $p$-integral elements of $L$ of the type  $\alpha_i=\frac{c_{i0}+c_{i1}\xi+\cdots +c_{ii-1}\xi^{i-1}+\xi^i}{p^{k_i}}$ where $c_{ij},~k_i$ are in $\Z$ with $0\leq k_i\leq k_{i+1}$ for $1\leq i\leq n-1$. Then  $\{1,\alpha_1,\cdots ,\alpha_{n-1}\}$ is a $p$-integral basis of $L$ if and only if    $v_p(ind~\xi)=\displaystyle\sum_{i=1}^{n-1}k_i$, in which case  the integers $k_1,~\cdots,k_{n-1}$ are uniquely determined by  the prime $p$ and the element $\xi$ of $L$. Morever there always exist a $p$-integral basis of $L$ of the above type.\\
 
 As usual, by a valuation $v$ of a field $K$,   we shall mean a mapping $v:K \longrightarrow\mathbb{R}\cup \{\infty\}$ which satisfies the following properties for all $\alpha,~\beta$ in $ K$.\\
 $(i)$ $v(\alpha)= \infty$ if and only if $\alpha=0,$\\
 $(ii)$ $v(\alpha\beta)= v(\alpha)+ v(\beta)$,\\
 $(iii)$ $v(\alpha+\beta)\geq $ min$\{v(\alpha), v(\beta)\}.$\\The pair $(K, ~v)$ is called a valued field. The subgroup $v(K^{\times})$ of $(\R,+)$ is called the value group of $v$.  The subring $R_v$ of  $K$ defined by ${R}_v= \left\lbrace \alpha\in K\mid v(\alpha)\geq 0\right\rbrace $ is called the valuation ring of $v$. It has unique maximal ideal $\mathcal {M}_v=\left\lbrace \alpha\in K\mid v(\alpha)> 0\right\rbrace $ and ${R}_v/\mathcal{M}_v$ is called the residue field of $v$. A valuation $v'$ of an overfield $K'$ of $K$ is said to be an extension or a prolongation of $v$ to $K'$  if $v'$ coinsides with $v$ on $K$.\\  \\
\textbf{Notations:} In what follows,  for a prime power $q$, we shall denote by $\F_q$ the field with $q$ elements. For a given prime $p$, $v_p$ will stand for the $p$-adic valuation of $\Q$  defined by $v_p(p)=1 $. We shall also denote by $v_p$ its unique prolongation to the field $\Q_p$ of $p$-adic numbers and by $\Z_p$ the ring of $p$-adic integers. $\tilde{v}_{p}$ will stand for the unique prolongation of the valuation $v_p$ of $\Q_p$ to a fixed algebraic closure $\tilde{\Q}_p$  of $\Q_p$. For any $\beta$ in the valuation ring of $\tilde{v}_{p}$, $\overline{\beta} $ will denote its $\tilde{v}_{p}$- residue, i.e., the image of $\beta$ under the canonical homomorphism from the valuation ring of $\tilde{v}_{p}$ onto its residue field. If $v_1$ denotes the restriction of $\tilde{v}_{p}$ to a subfield $K_1$ of $\tilde{\Q}_p$, then for any  polynomial $g_1(x)$  with coefficients in the valuation ring of $v_1$, $\overline{g_1}(x)$ will stand for the polynomial obtained by  replacing each coefficient of $g_1(x)$ by its $v_1$-residue in the residue field of $v_1$.

For proving Theorem \ref{p1.1},  we will use the classical Theorem of Index of Ore (stated as Theorem 3.C below) in addition to several simplifications. For this, we introduce the notions of Gauss valuation, $\phi$-Newton polygon, $\phi$-index of a polynomial where $\phi(x)$  belonging to $\Z_p[x]$ is a monic polynomial with $\overline{\phi}(x)$ irreducible over $\F_p$ .

We shall denote by $v_p^x$ the Gauss valuation of the field $\Q_p(x)$ of rational functions in an indeterminate $x$ which extends the valuation $v_p$ of $\Q_p$ and is defined on $\Q_p[x]$ by \begin{equation}\label{Gau}
v_p^x(\displaystyle\sum_{i}c_ix^i)= \displaystyle\min_i\{v_p(c_i)\}, c_i\in \Q_p.
 \end{equation}
\begin{definition}\label{p1.6}
Let $\phi(x)\in\Z_p[x]$ be a monic polynomial which is irreducible modulo $p$ and $F(x)\in\Z_p[x]$ be a monic polynomial not divisible by $\phi(x)$. Let $\displaystyle\sum_{i=0}^{n}a_i(x)\phi(x)^i$ with degree $a_i(x)<$ degree $\phi(x)$, $a_n(x)\neq 0$ be the $\phi(x)$-expansion of $F(x)$ obtained on dividing it by successive powers of $\phi(x)$. Let $P_i$ stands for the point in the plane having coordinates $(i,v_p^x(a_{n-i}(x)))$ when $a_{n-i}(x)\neq 0$, $0\leq i\leq n$. Let $\mu_{ij}$ denote the slope of the line joining the point $P_i$ with $P_j$ if $a_{n-i}(x)a_{n-j}(x)\neq 0$.  Let $i_1$ be the largest positive index not exceeding $n$ such that\\
\centerline{$\mu_{0i_1}=\min\{~\mu_{0j}~|~0<j\leq n,~a_{n-j}(x)\neq0\}.$}

\noindent
If $i_1<n,$ let $i_2$ be the largest index such that $i_1<i_2\leq n$ and\\
\centerline{$\mu_{i_1i_2}=\min\{~\mu_{i_1j}~|~i_1<j\leq n,~a_{n-j}(x)\neq0\}$} 
 and so on. The $\phi$-Newton polygon of $F(x)$ with respect to the prime $p$ is the polygonal path having segments
$P_{0}P_{i_1},P_{i_1}P_{i_2},\ldots,P_{i_{k-1}}P_{i_k}$ with $i_k=n$. These segments are called the edges of the $\phi$-Newton polygon and their slopes form a strictly increasing sequence; these slopes are non-negative as $F(x)$ is a monic polynomial with coefficients in $\Z_p$.
\end{definition}


\begin{definition}\label{p1.7}
Let $\phi(x)$ and $F(x)$ be as in Definition \ref{p1.6}. Let $N$ denote the number of points with positive integral coordinates lying on or below the $\phi$-Newton polygon of $F(x)$ away from the vertical line passing through the last vertex of this polygon. As in \cite{Kh-Ku}, the $\phi$-index of $F$ (with respect to $p$)  is defined to be $N\deg \phi(x)$ and will be denoted by $i_{\phi}(F)$.
\end{definition}
\begin{example}
 Consider $\phi(x)=x^2+1$. We determine the $\phi$-Newton polygon of the polynomial $F(x)=(x^2+1)^3+(3x+9)(x^2+1)^2+(12x+9)(x^2+1)+9x+81$ with respect to the prime $3$. The $\phi$-Newton polygon of $F(x)$ with respect to $3$ being the convex hull of the points (0, 0), (1, 1), (2, 1) and (3, 2) has two edges; the first edge  is the line segment joining the point  (0, 0) with (2, 1) and the second edge is the line segment joining  (2, 1) with (3, 2) as shown in the following diagram. Also $i_{\phi}(F)=2$.
 
\begin{figure}[htbp]

\hspace{30mm}
\includegraphics[width=0.50\textwidth]{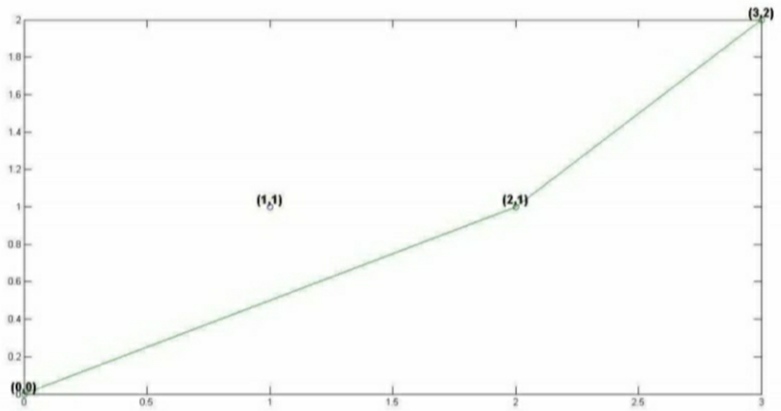}

\caption{$\phi$-Newton polygon of $F(x)$}

\end{figure}
\end{example}



With the above notation, we shall use the theorem stated below originally proved by Ore (see \cite[Theorem 1.2] {Kh-Ku}, \cite{Ore} ).\\

\noindent\textbf{Theorem 3.B.}\label{p1.9}
 {\it  Let $L=\Q(\xi)$ be an algebraic number field with $\xi$ satisfying  a monic  irreducible polynomial $g(x) \in \Z[x]$  and $p$ be a prime number. Let $ \overline{\phi}_{1}(x)^{e_1}\cdots\overline{\phi}_r{(x)}^{e_r}$ be the factorization of ${g}(x)$ modulo $p$ into a product of powers of distinct irreducible polynomials over $\F_p$ with each
$\phi_i(x) \neq g(x)$ belonging to $\Z[x]$ monic. Then }
\begin{equation*}\label{o1}\vspace*{-0.05in}
v_p(ind~\xi) \geq \sum\limits_{j=1}^{r}i_{\phi_j}(g).
\vspace*{-0.05in}\end{equation*} 

Ore also gave a sufficient condition so that the inequality in the above theorem becomes equality. For this he associated with each edge $S_{ij}$ having positive slope of the $\phi_i$- Newton polygon of $g(x)$ a polynomial $T_{ij}(Y)$ in an indeterminate $Y$ with coefficients from the field $\mathbb F_q$, $q=p^{deg\phi_i(x)}$ described in the following definitions.
\begin{definition}\label{p1.10}
Let $\phi(x) \in\Z_p[x]$  be a monic polynomial which is irreducible modulo $p$ having a root $\alpha$ in $\tilde{\Q}_{p}$. Let $F(x) \in \Z_p[x]$ be a monic polynomial not divisible by $\phi(x)$ having degree a multiple of $\deg \phi(x)$ with $\phi(x)$-expansion $\phi(x)^n + a_{n-1}(x)\phi(x)^{n-1} + \cdots + a_0(x)$. Suppose that the $\phi$-Newton polygon of $F(x)$  consists of a single edge, say $S$ having positive slope denoted by $\frac{d}{e}$ with $d, e$ coprime, i.e.,  $\min\{\frac{v_p^x(a_{n-i}(x))}{i}~\mid~1\leq i\leq n\} = \frac{v_p^x(a_0(x))}{n} = \frac{d}{e}$ so that $n$ is divisible by $e$, say $n=et$ and $v_p^x(a_{n-ej}(x)) \geq dj$ for $1\leq j\leq t$. Thus the polynomial $\frac{a_{n-ej}(x)}{p^{dj}}=b_j(x)$ (say)  has coefficients in $\Z_p$ and hence $b_j(\alpha)\in \Z_p[\alpha]$ for $1\leq j \leq t$.  The polynomial $T(Y)$ in an indeterminate $Y$ defined by   $T(Y) = Y^t + \sum\limits_{j=1}^{t} \overline{b_j}(\overline{\alpha})Y^{t-j}$ having coefficients in $\F_p[\overline{\alpha}]$ is said to be the polynomial associated to $F(x)$ with respect to $(\phi,S)$.
\end{definition}

With notations as in the above definition, note that a lattice point \footnote{A point in the plane having integer coordinates will be referred to as a lattice point}, $(i,v_p^x(a_{n-i}(x)))$ lies on $S$ if and only if $i$ is divisible by $e$, say $i=ej$ and $v_p^x(a_{n-i}(x))=\frac{di}{e}=dj$ i.e., $v_p^x(b_{j}(x))=0$ which is same as saying that  $\overline{b_j}(\overline{\alpha})\neq\overline{0}$  as $\overline{\phi}(x)$ is the minimal polynomial of $\overline{\alpha}$ over $\F_p$.\\ \\
\noindent\textbf{Example.} Let $F(x)=(x+5)^4-5$. Clearly $f(x)\equiv x^4~(mod~2)$. One can check that the $x$-Newton polygon of $F(x)$ with respect to the prime $2$ consists of only one edge $S$ joining the points (0, 0) and (4, 2) with the lattice point (2, 1) lying on it. With notations as in the above definition, we see that $e=2,~d=1$ and the polynomial associated to $F(x)$ with respect to $(x,S)$ is $T(Y)=Y^2+Y+\overline{1}$ belonging to $\F_2[Y]$.\\  

	The following definition extends the notion of associated polynomial when $F(x)$ is more general.
\begin{definition}\label{p1.11} Let $\phi(x), \alpha$ be as in Definition \ref{p1.10}.  Let $G(x)\in \Z_p[x]$ be a monic polynomial not divisible by $\phi(x)$ such that $\overline{G}(x)$ is a power of $\overline{\phi}(x)$. Let $\lambda_1 < \cdots < \lambda_k$ be the slopes of the edges of the $\phi$-Newton polygon of $G(x)$ and $S_i$ denote the edge with slope $\lambda_i$. In view of a classical result proved by Ore (cf. \cite[Theorem 1.5]{CMS}, \cite[Theorem 1.1]{SKS1}), we can write $G(x) = G_1(x)\cdots G_k(x)$, where the $\phi$-Newton polygon of $G_i(x) \in \Z_{{p}}[x]$ has a single edge, say $S_i'$ which is a translate of $S_i$. Let $T_i(Y)$ belonging to ${\F}_{p}[\overline{\alpha}][Y]$ denote the polynomial associated to $G_i(x)$ with respect to ($\phi,~S_i'$) described as in Definition \ref{p1.10}.  For convenience, the polynomial $T_i(Y)$  will be referred to as the polynomial  associated to $G(x)$ with respect to $(\phi,S_i)$. The polynomial $G(x)$ is said to be $p$-regular with respect to $\phi$ if none of the polynomials $T_i(Y)$  has a repeated root in the algebraic closure of $\F_p$, $1\leq i\leq k$. In general, if $F(x)$ belonging to $\Z_p[x]$ is a monic polynomial and $\overline{F}(x) = \overline{\phi}_{1}(x)^{e_1}\cdots\overline{\phi}_r{(x)}^{e_r}$ is its factorization modulo $p$ into irreducible polynomials with each $\phi_i(x)$ belonging to $\Z_p[x]$ monic and $e_i > 0$, then by Hensel's Lemma there exist monic polynomials $F_1(x), \cdots, F_r(x)$ belonging to $\Z_{{p}}[x]$ such that $F(x) = F_1(x)\cdots F_r(x)$ and $\overline{F}_i(x) = \overline{\phi}_i(x)^{e_i}$ for each $i$. The polynomial $F(x)$ is said to be $p$-regular (with respect to $\phi_1, \cdots, \phi_r$) if each $F_i(x)$ is ${p}$-regular with respect to $\phi_i$.
\end{definition}
With the above notations, we state below a celebrated theorem known as the Theorem of Index of Ore (cf.  \cite[Theorem 1.4]{Kh-Ku}, \cite{Ore}).\\

\noindent\textbf{Theorem 3.C.}\label{p1.12}
{\it Let $L=\Q(\xi),~g(x)$ and $\phi_1(x),\cdots ,\phi_r(x)$ be as in Theorem 3.B. If $g(x)$ is $p$-regular with respect to $\phi_1,\cdots ,\phi_r$, then \begin{equation*}\label{o1}\vspace*{-0.05in}
v_p(ind~\xi)=\sum\limits_{j=1}^{r}i_{\phi_j}(g).
\vspace*{-0.05in}\end{equation*}  }

Let $g(x),~p,~\phi_1(x),\cdots,\phi_r(x)$ be as in Theorem 3.B. Then by Hensel's Lemma, we can write $g(x)=g_1(x)\cdots g_r(x)$ where $g_i(x)\in \Z_p[x]$ is a monic polynomial with $\overline{g}_i(x)=\overline{\phi_i}(x)^{e_i}$. If $S$ is an edge of the $\phi_i$-Newton polygon of $g_i(x)$, then the polynomial associated to $g_i(x)$ with respect to $(\phi_i,~S)$ will be referred to as the polynomial associated to $g(x)$ with respect to $(\phi_i,~S')$ where $S'$ is the edge of the $\phi_i$-Newton polygon of $g(x)$ which is a translate of $S$, because the $\phi_i$ -Newton polygon of $g(x)$ can be obtained from the $\phi_i$ -Newton polygon of $g_i(x)$ by giving a horizontal translation in view of the following simple lemma proved in \cite[Proposition 1.2, Theorem 3.2]{CMS} and \cite[Corollary 2.5]{SKS1}.\\

\noindent\textbf{Lemma 3.D.}
Let $\phi(x)\in \Z_p[x]$ be a monic polynomial which is irreducible modulo $p$ and $F(x)$, $G(x)$ belonging to $\Z_p[x]$ be two monic polynomials not divisible by $\phi(x)$. If $\overline{\phi}(x)$ does not divide $\overline{G}(x)$, then the $\phi$-Newton polygon of $G(x)$ is a horizontal line segment and the $\phi$ -Newton polygon of $F(x)G(x)$ is obtained by adjoining this horizontal line segment to the translate of the  $\phi$-Newton polygon of $F(x)$.\\

As usual for a real number $\lambda$, $ \lfloor {\lambda} \rfloor$ stands for the largest integer not exceeding $\lambda$. The following proposition is proved in a more general set up in (\cite [ Proposition 2.2]{Kh-Ku}). Its proof is omitted.\\

\noindent\textbf{Proposition 3.E.}\label{1.13}
Let $\Q(\eta)$ be an algebraic number field, where $\eta$ is a root of a monic irreducible polynomial $h(x)$ belonging to $\Z_{(p)}[x]$, $\Z_{(p)}$ being the localisation of $\Z$ at a prime ideal $p\Z$. Let $\phi(x)\in~\Z_{(p)}[x]$ be a monic polynomial different from $h(x)$ which divides $h(x)$ modulo $p$ and is irreducible modulo $p$. Let $d$ denote the largest integer not exceeding $(deg~h$)/$deg~\phi$ and $q_j(x)$ the quotient obtained on dividing $h(x)$ by $\phi(x)^j$, $1\leq j\leq d$. If $y_{d-j}$ stands for the ordinate of the point with abscissa $d-j$ on the $\phi$-Newton polygon of $h(x)$ with respect to prime $p$, then $q_j(\eta)/p^{[y_{d-j}]}$ is integral over $\Z_{(p)}$.\\ \\
The elementary  lemma stated below is well known (cf. \cite[Problem 435]{Book}).\\

\noindent\textbf{Lemma 3.F.} Let $t,  n$ be positive integers. Let $\mathcal P$ denote the set of points in the plane with positive integer entries lying inside or on the triangle with vertices $(0,0), (n, 0), (n,t)$ which do not lie on the line $x = n$.  Then
 $\# \mathcal P = \sum\limits_{i=1}^{n-1} \Bigl\lfloor \dfrac{it}{n} \Bigr\rfloor  = \dfrac{1}{2} [(n-1)(t-1) + \gcd(t, n) -1].$

 \section{Proof of Theorem \ref{p1.1}}
 For any prime $p$, we shall denote by $S_{(p)}$ the integral closure of $\Z_{(p)}$ in $K$.\\
{ \bf Case E1 :} $v_2(a)
=0$. By virtue of (\ref{eq:p1.1}), we have $v_2(D)=0$. Using the relation $v_2(D)=2v_2(ind~\theta)+v_2(d_K)$, we see that $v_2(ind~\theta)=0.$ Therefore $\{1,\theta,\theta^2,\theta^3,\theta^4,\theta^5\}$ is a 2-integral basis of $K$.\\
{ \bf Case E2 :} $v_2(a)=v_2(b)=1$. In view of (\ref{eq:p1.1}), we have $v_2(D)=6$. In this case $f(x)\equiv x^6~(mod~2)$. The $x$-Newton polygon of $f(x)$ (with respect to prime 2) has a single edge, say $S$ joining the points (0, 0) and (6, 1) with slope $\frac{1}{6}$. The polynomial associated to $f(x)$ with respect to ($x,S)$ is linear. So Theorem 3.C is applicable. Applying this theorem, we see that $v_2(ind~\theta)$  equals the number of points with the positive integral coordinates lying on or below the $x$-Newton polygon of $f(x)$ which do not lie on the line $x=6$. So $v_2(ind~\theta)=0$ and hence $\{1,\theta,\theta^2,\theta^3,\theta^4,\theta^5\}$ is a 2-integral basis of $K$.\\
{ \bf Case E3 :} $v_2(a)\geq 2, ~v_2(b)=1$. It follows from (\ref{eq:p1.1}) that $v_2(D)=11$.  The $x$-Newton polygon of $f(x)$ has a single edge joining the points (0, 0) and (6, 1).  Arguing as in case E2, it can be seen that $v_2(ind~\theta)=0$ and hence $\{1,\theta,\theta^2,\theta^3,\theta^4,\theta^5\}$ is a 2-integral basis of $K$.\\
{ \bf Case E4 :} $v_2(a)=1, ~v_2(b)\geq 2$. By virtue of (\ref{eq:p1.1}), we have $v_2(D)=6.$ The $x$-Newton polygon of $f(x)$ has two edges . The first edge, say $S_1$  is the line segment joining the point (0, 0) with (5, 1) and the second edge, say $S_2$ is the line segment joining the point (5, 1) to  (6, $v_2(b))$.  The polynomial associated to $f(x)$ with respect to $(x,S_i)$ is linear for $i=1,2$. So Theorem 3.C is applicable. It follows from this theorem that $v_2(ind~\theta)=1$. Applying Proposition 3.E with $\phi(x)=x$ and $j=1$, we see that $\frac{\theta^5+a}{2}\in S_{(2)}$. So $\frac{\theta^5}{2}\in S_{(2)}$ as $v_2(a)=1$.  Since $v_2(ind~\theta)=1$, it follows from Proposition 3.A that $\{1, \theta,\theta^2,\theta^3,\theta^4,\frac{\theta^5}{2}\}$ is a 2-integral basis of $K$.\\
{ \bf Case E5 :} $v_2(a)=2, ~v_2(b)\geq 3.$ In this case  $v_2(D)=12$.  The $x$-Newton polygon of $f(x)$ has two edges . The first edge, say $S_1$  is the line segment joining the point (0, 0) with  (5, 2) and the second edge, say $S_2$ is the line segment joining the point (5, 2) with  (6, $v_2(b))$.  The polynomial associated to $f(x)$ with respect to $(x,S_i)$ is linear for $i=1,2$. Using Theorem 3.C and Lemma 3.F, it can be shown that $v_2(ind~\theta)=4$.  Applying Proposition 3.E with $\phi(x)=x$ and $j=1,2,3$, we see that $\frac{\theta^5+a}{2^2}$, $\frac{\theta^4}{2}$, $\frac{\theta^3}{2}$ are integral over $\Z_{(2)}$. By hypothesis, $v_2(a)=2$. So $\frac{\theta^5}{2^2}\in S_{(2)}.$ Since $v_2(ind~\theta)=4$, we conclude that $\{1, \theta,\theta^2,\frac{\theta^3}{2},\frac{\theta^4}{2},\frac{\theta^5}{2^2}\}$ is a 2-integral basis of $K$ keeping in mind Proposition 3.A. \\
{ \bf Case E6 :} $v_2(a)=v_2(b)=3$. In view of (\ref{eq:p1.1}), we have $v_2(D)=18$.  The $x$-Newton polygon of $f(x)$ has a single edge, say $S$ joining the points (0, 0) and (6, 3). The polynomial associated  to $f(x)$ with respect to ($x,S)$ is $Y^3+\overline{1}\in\F_2[Y]$ having no repeated roots. Using Theorem 3.C  and Lemma 3.F, we obtain $v_2(ind~\theta)=6$.  Applying Proposition 3.E with $\phi(x)=x$ and $j=1,2,3,4$, it can be seen that $\frac{\theta^5}{2^2}$, $\frac{\theta^4}{2^2}$, $\frac{\theta^3}{2}$, $\frac{\theta^2}{2}$ belongs to $ S_{(2)}$.  Since $v_2(ind~\theta)=6$, it now follows from Proposition 3.A that $\{1, \theta,\frac{\theta^2}{2},\frac{\theta^3}{2},\frac{\theta^4}{2^2},\frac{\theta^5}{2^2}\}$ is a 2-integral basis of $K$. \\
{ \bf Case E7 :} $v_2(a)\geq 4,~v_2(b)=3$. Keeping in mind (\ref{eq:p1.1}), we have $v_2(D)=21$.  The $x$-Newton polygon of $f(x)$ has a single edge joining the points (0, 0) and (6, 3). Arguing as in case E6, we see that $v_2(ind~\theta)=6$ and  $\{1, \theta,\frac{\theta^2}{2},\frac{\theta^3}{2},\frac{\theta^4}{2^2},\frac{\theta^5}{2^2}\}$ is a 2-integral basis of $K.$ \\
{ \bf Case E8 :} $v_2(a)=3, ~v_2(b)\geq 4.$ In this case  $v_2(D)=18$.  The $x$-Newton polygon of $f(x)$ has two edges . The first edge, say $S_1$  is the line segment joining the point (0, 0) with  (5, 3) and the second edge, say $S_2$ is the line segment joining the point (5, 3) with  (6, $v_2(b))$.  The polynomial associated to $f(x)$ with respect to $(x,S_i)$ is linear for $i=1,2$.  Using Theorem 3.C and Lemma 3.F, we obtain $v_2(ind~\theta)=7$.  Applying Proposition 3.E with $\phi(x)=x$ and $j=1,2,3,4$, we see that $\frac{\theta^5}{2^3}$, $\frac{\theta^4}{2^2}$, $\frac{\theta^3}{2}$, $\frac{\theta^2}{2}$ are integral over $\Z_{(2)}$.  Since $v_2(ind~\theta)=7$, it follows that $\{1, \theta,\frac{\theta^2}{2},\frac{\theta^3}{2},\frac{\theta^4}{2^2},\frac{\theta^5}{2^3}\}$ is a 2-integral basis of $K$ in view of Proposition 3.A. \\
{ \bf Case E9 :} $v_2(a)=4, ~v_2(b)\geq 5.$ In the present case $v_2(D)=24$.  The $x$-Newton polygon of $f(x)$ has two edges . The first edge  is the line segment joining the point (0, 0) with  (5, 4) and the second edge is the line segment joining the point (5, 4) with  (6, $v_2(b))$. Arguing as in case E8, we see that $v_2(ind~\theta)=10$ and $\{1, \theta,\frac{\theta^2}{2},\frac{\theta^3}{2^2},\frac{\theta^4}{2^3},\frac{\theta^5}{2^4}\}$ is a 2-integral basis of $K$. \\
{ \bf Case E10 :} $v_2(a)=v_2(b)=5$. We have $v_2(D)=30$.  The $x$-Newton polygon of $f(x)$ has a single edge, say $S$ joining the points (0, 0) and (6, 5). Note that the polynomial associated to $f(x)$ with respect to ($x,S)$ is linear.  Using  Theorem 3.C  and Lemma 3.F, we obtain $v_2(ind~\theta)=10$.  Applying Proposition 3.E with $\phi(x)=x$ and $j=1,2,3,4$, it can be  seen that $\frac{\theta^5}{2^4}$, $\frac{\theta^4}{2^3}$, $\frac{\theta^3}{2^2}$, $\frac{\theta^2}{2}$ are in $S_{(2)}$ and hence $\{1, \theta,\frac{\theta^2}{2},\frac{\theta^3}{2^2},\frac{\theta^4}{2^3},\frac{\theta^5}{2^4}\}$ is a 2-integral basis of $K$. \\
{ \bf Case E11 :} $v_2(a)\geq 6, ~v_2(b)=5.$ In this case $v_2(D)=31$.  The $x$-Newton polygon of $f(x)$ has a single edge joining the points (0, 0) and (6, 5). Arguing as in case E10, we see that $v_2(ind~\theta)=10$ and $\{1, \theta,\frac{\theta^2}{2},\frac{\theta^3}{2^2},\frac{\theta^4}{2^3},\frac{\theta^5}{2^4}\}$ is a 2-integral basis of $K$. \\
{ \bf Case E12 :} $v_2(a)=1,~b\equiv 3~(mod~4)$. By virtue of (\ref{eq:p1.1}) and the hypothesis, we have $v_2(D)=7$. In this case $f(x)\equiv (x^2+x+1)^2(x+1)^2~(mod~2)$. Set $\phi_1(x)=x^2+x+1$ and $\phi_2(x)=x+1$. The $\phi_1$-expansion of $f(x)$ is given by
\begin{equation}\label{eq p1.26}
f(x)=(x^2+x+1)^3-3x(x^2+x+1)^2+(2x-2)(x^2+x+1)+ax+b+1.
\end{equation}
If $v_2^x$ denotes the Gaussian valuation of $\Q(x)$  defined by (\ref{Gau}) extending the $2$-adic valuation of $\Q$, then  $v_2^x(ax+b+1)=min\{v_2(a),v_2(b+1)\}=1$. So the $\phi_1$-Newton polygon of $f(x)$ has a  single edge  of positive slope,  say $S$ joining the point (1, 0) with (3, 1). The polynomial associated to  $f(x)$ with respect to $(\phi_1,S)$ is linear and  $i_{\phi_{1}}(f)=0$. Write
\begin{equation}\label{eq F17 2}
f(x)=(x+1)^6-6(x+1)^5+15(x+1)^4-20(x+1)^3+15(x+1)^2+(a-6)(x+1)+(-a+b+1).
\end{equation}
It can be easily seen that the $\phi_2$-Newton polygon of $f(x)$ has a single edge of positive slope, say $S_1$   joining the point (4, 0) with (6, 1) and $i_{\phi_2}(f)=0$. Note that the  polynomial associated to $f(x)$ with respect to $(\phi_2,S_1)$ is also linear. Therefore $f(x)$ is 2-regular with respect to $\phi_1,~\phi_2$. Consequently it follows from  Theorem 3.C that $v_2(ind~\theta)= i_{\phi_{1}}(f)+i_{\phi_{2}}(f)=0$. Hence $\{1,\theta,\theta^2,\theta^3,\theta^4,\theta^5\}$ is a 2-integral basis of $K$.\\
{ \bf Case E13 :} $v_2(a)=1,~b\equiv 1~(mod~4),~v_2(D)$  odd. By virtue of (\ref{eq:p1.1}) and the hypothesis, we have $v_2(D)\geq 9$. Here $f(x)\equiv (x^2+x+1)^2(x+1)^2~(mod~2)$. Set $\phi_1(x)=x^2+x+1$. It can be seen  that the $\phi_1$-Newton polygon of $f$ is the same as in case E12 and  \begin{equation}\label{eq E12 1}
i_{\phi_{1}}(f)=0.
\end{equation} In the present case $a\equiv 2 ~(mod~4)$, $b\equiv 1 ~(mod~4)$ which implies that $v_2(a-6)\geq 2,~v_2(-a+b+1)\geq 2$. So the number of edges of the $(x+1)$-Newton polygon depends upon the value of $v_2(a-6)$ and $v_2(-a+b+1)$. To get rid of this uncertain situation, consider $\phi_2(x)=x-\beta$ where $\beta=\frac{-6b}{5a}$. Note that $v_2(\beta)=0$. The $\phi_2$-expansion of $f(x)$ can be written as
\begin{equation}\label{beta}
f(x)=(x-\beta)^6+6\beta(x-\beta)^5+15\beta^2(x-\beta)^4+20\beta^3(x-\beta)^3+15\beta^4(x-\beta)^2+f'(\beta)(x-\beta)+f(\beta).
\end{equation}
Denote $v_2(f(\beta)),~v_2(f'(\beta))$ by $s_0,~s_1$ respectively. On substituting for $\beta$ and keeping in mind (\ref{eq:p1.1}), we see that $f(\beta)=\beta^6+a\beta+b=\frac{-bD}{5^6a^6}$. Therefore 
\begin{equation}\label{eq:p1.20}
s_0=v_2(D)-6.
\end{equation}
Similarly in view of the fact that $f'(\beta)=6\beta^5+a=\frac{D}{5^5a^5}$, we have   $s_1=v_2(D)-5$. Therefore 
\begin{equation}\label{s}
s_1=s_0+1=v_2(D)-5.
\end{equation}
Since $v_2(D)\geq 9$, it follows from (\ref{eq:p1.20}) that  $s_0\geq 3$. Using this fact together with (\ref{beta}) and (\ref{s}), it can be easily seen that the $\phi_2$-Newton polygon of $f(x)$ has a single edge of positive slope, say $S_2$ joining the points (4, 0) and (6, $s_0)$ with slope $\frac{s_0}{2}$. In view of (\ref{eq:p1.20}) and the hypothesis $v_2(D)$  odd, we conclude that $s_0$ is odd. Therefore the  polynomial associated to $f(x)$ with respect to ($\phi_2,S_2)$ is linear; consequently $f(x)$ is 2-regular with respect to $\phi_1,~\phi_2$.   Using  Lemma 3.F and (\ref{eq:p1.20}), we have $$i_{\phi_2}(f)=\frac{s_0-1}{2}=\frac{v_2(D)-7}{2}.$$  Keeping in mind the above equality together with (\ref{eq E12 1}), it now follows from Theorem 3.C that \begin{equation}\label{eq E12 2}
v_2(ind~\theta)= i_{\phi_{1}}(f)+i_{\phi_{2}}(f)=\frac{v_2(D)-7}{2}.
\end{equation}   

Denote $\frac{v_2(D)-7}{2}$ by $k_0$. Applying Proposition 3.E with $\phi(x)$ replaced by $x-\beta$ and $j$ by  $1$, we see that the element  $\frac{1}{2^{k_0}}[\theta^5+\beta\theta^4+\beta^2\theta^3+\beta^3\theta^2+\beta^4\theta+\beta^5+a]$ belongs to  $S_{(2)}$. Since $v_2(6\beta^5+a)= s_1=v_2(D)-5>k_0$ in view of (\ref{s}), it now follows that the element $\frac{1}{2^{k_0}}[\theta^5+\beta\theta^4+\beta^2\theta^3+\beta^3\theta^2+\beta^4\theta-5\beta^5]=\eta~ (say)$ is in $S_{(2)}$. Now choose an integer $x_0$ such that 
$5(\frac{a}{2})x_0+3b\equiv 0~(mod~2^{k_0})$
; such an integer $x_0$ exists because $\frac{a}{2}$ is odd. Note that $2^{k_0}$ divides $\beta-x_0$  in $\Z_{(2)}$. Therefore the element $\eta_0$ of $K$ defined by  $$\eta_0=\frac{\theta^5+x_0\theta^4+x_0^2\theta^3+x_0^3\theta^2+x_0^4\theta-5x_0^5}{2^{k_0}}$$ is in $S_{(2)}$ because  $\eta-\eta_0 $ belongs to $\Z_{(2)}[\theta]\subseteq S_{(2)}$. Since $v_2(ind~\theta)=k_0=\frac{v_2(D)-7}{2}$ by virtue of (\ref {eq E12 2}), it follows that  $\{1,\theta,\theta^2,\theta^3,\theta^4,\eta_0\}$ is a 2-integral basis of $K$ in view of Proposition 3.A.\\
{ \bf Case E14 :} $v_2(a)=1,~b\equiv 1~(mod~4),~v_2(D)$  even, $D_2\equiv 1 ~ (mod~4)$.  Keeping in mind the hypothesis and  (\ref{eq:p1.1}), we have $v_2(D)\geq 8$. In this case $f(x)\equiv (x^2+x+1)^2(x+1)^2~(mod~2).$ Set $\phi_1(x)=x^2+x+1$, $\beta=\frac{-6b}{5a}$, $s_0=v_2(f(\beta))$ and $s_1=v_2(f'(\beta))$. Using (\ref{eq p1.26}), one can check that the $\phi_1$-Newton polygon of $f(x)$ has a single edge of positive slope,  say $S$ joining the point (1, 0) with (3, 1) and the polynomial associated to $f(x)$ with respect to $(\phi_1,S)$ is linear. Clearly \begin{equation}\label{eq E14 1}
 i_{\phi_1}(f)=0.
 \end{equation} Arguing as for the proof of (\ref{eq:p1.20}) and (\ref{s}), it can be shown that
 \begin{equation}\label{eq E14 2}
 s_0=v_2(D)-6,~s_1=s_0+1.
 \end{equation} Further the  $(x-\beta)$-Newton polygon of $f(x)$ has a single edge of positive slope, say $S_1$  joining the points (4, 0) and (6, $s_0)$ with slope $\frac{s_0}{2}$. It is clear from  (\ref{eq E14 2}) that $s_0\equiv v_2(D)~(mod~2)$. Keeping in mind the hypothesis $v_2(D)$ is even, we conclude that $s_0$ is even.  Therefore the  polynomial associated to $f(x)$ with respect to $(x-\beta,S_1)$ is $Y^2+\overline{1}$ which has a repeated root in the field $\F_2$. Thus $f(x)$ is not 2-regular with respect to $\phi_1,~x-\beta$. So we now consider another rational number $\delta$ with $v_2(\delta)=0$ for which $f(x)$ turns out to be 2-regular with respect to $\phi_1,~x-\delta$. Since $v_2(D)\geq 8,$ it follows that $s_0\geq 2$ in view of (\ref{eq E14 2}). Define an integer $u\geq 1$ by  
$u=\frac{s_0}{2}$. Consider the rational number $\delta=\frac{2^u-3b}{5a_2}$, where $a_2=\frac{a}{2}$. Note that $v_2(\delta)=0$. Set $\phi_2(x)=x-\delta$ and write 
\begin{equation}\label{ga}
f(x)=(x-\delta)^6+6\delta(x-\delta)^5+15\delta^2(x-\delta)^4+20\delta^3(x-\delta)^3+15\delta^4(x-\delta)^2+f'(\delta)(x-\delta)+f(\delta).
\end{equation}
We first prove  that \begin{equation}\label{eq E14}
v_2(f(\delta))\geq 2u+1,~v_2(f'(\delta))=u+1.
\end{equation} Using (\ref{eq E14}), we shall prove that  $f(x)$ is 2-regular with respect to $\phi_1,~\phi_2$.
 Substituting for $\delta$ in $f(\delta)=\delta^6+a\delta+b$, we have 
$$5^6a_2^6f(\delta)=(2^u-3b)^6+a(2^u-3b)(5a_2)^5+b(5a_2)^6;$$ 
the above equation on  applying binomial theorem and rearranging terms can be rewritten as 
\begin{equation}\label{lmm 1}
5^6a_2^6f(\delta)=2^{6u}-9.2^{5u+1}b+135b^22^{4u}-(3b)^3.5.2^{3u+2}+15(3b)^42^{2u}+(b-2^{u+1})(3^6b^5-5^5a_2^6).
\end{equation}
Recall that by virtue of (\ref{eq:p1.1}), $3^6b^5-5^5a_2^6=\frac{-D}{2^6}$. Therefore in view of the fact that $s_0=2u$, $2\nmid b$ and (\ref{eq E14 2}), we have $$v_2(3^6b^5-5^5a_2^6)=v_2(D)-6=s_0=2u, ~ v_2(b-2^{u+1})=0.$$ So the last two summands on the right hand side of (\ref{lmm 1}) have $v_2$-valuation $2u$ and clearly every other summand has valuation strictly greater than  $2u+1$. It now follows from (\ref{lmm 1}) that
$v_2(f(\delta))\geq 2u+1.$ On
 differentiating $f(x)$, the relation $6f(x)-xf'(x)=5ax+6b$ can be easily verified; substituting $x=\delta$ in this  equation and keeping in mind that $5a\delta+6b=2^{u+1}$ together with the fact $v_2(f(\delta))\geq 2u+1$, we see that $v_2(f'(\delta))=u+1$ which proves (\ref{eq E14}).
 
  We now show that \begin{equation}\label{lmm 3}
v_2(f(\delta))\geq 2u+2.
\end{equation}To prove (\ref{lmm 3}), we discuss two possibilities. 

Consider first the possibility when $v_2(D)$ is 8; then in view of (\ref{eq E14 2}), $s_0=2$ and $u=\frac{s_0}{2}=1$. Substituting $u=1$ in (\ref{lmm 1}) and dividing by $2^2$, we see that 
$$\frac{5^6a_2^6f(\delta)}{2^2}=2^4-2^4.9b+2^2.15.9b^2-2^3(3b)^35+15(3b)^4+(b-2^2)D_2.$$ On taking congruence modulo  $8$, the above equation shows that $$\frac{5^6a_2^6f(\delta)}{2^2}\equiv bD_2-1~(mod~8).$$ By hypothesis $b\equiv 1~(mod~4),~D_2\equiv 1 ~ (mod~4)$. So the above  congruence implies that $v_2(f(\delta))\geq 4=2u+2$ which proves (\ref{lmm 3}) in the present situation.

Consider now the possibility when $v_2(D)>8$. Then $s_0\geq 4$ and $u\geq 2$. Dividing (\ref{lmm 1}) by $2^{2u}$ and taking congruence modulo $8$, we have $$\frac{5^6a_2^6f(\delta)}{2^{2u}}\equiv bD_2-1~(mod~8).$$ Arguing as in the above paragraph, we see that, $v_2(f(\delta))\geq 2u+2$ which proves (\ref{lmm 3}).

Next we write the $\phi_2$-Newton polygon of $f(x)$ where $\phi_2(x)=x-\delta$ using the $\phi_2$-expansion of $f$ given by (\ref{ga}). Consider first the subcase when  $v_2(f(\delta))=2u+2$. Keeping in mind that $v_2(f'(\delta))=u+1$ proved in (\ref{eq E14}), it can be easily seen that in the present subacase  the $\phi_2 $-Newton polygon of $f$ has a single edge of positive slope, say $S_2 $ joining the points (4, 0) and $(6, 2u+2) $ with the lattice point $(5, u+1)$  lying on it. The polynomial associated to $f(x)$  with respect to $(\phi_2,S_2)$ is $Y^2+Y+\overline{1}$ which has no repeated roots.  So  $f(x)$ is 2-regular with respect to $\phi_1,~\phi_2$. In view of Theorem 3.C and  Lemma 3.F, we have $v_2(ind~\theta)=i_{\phi_1}(f)+i_{\phi_2}(f)= \frac{v_2(D)-4}{2}=k_1$ (say). Applying Proposition 3.E with $\phi(x)$ replaced by $\phi_2(x)$ and $j$ by $1$, we see that the element $\frac{1}{2^{k_1}}[\theta^5+\delta\theta^4+\delta^2\theta^3+\delta^3\theta^2+\delta^4\theta+\delta^5+a]$ is in $S_{(2)}$. Since $v_2(6\delta^5+a)=u+1=k_1$ in view of (\ref{eq E14 2}) and (\ref{eq E14}), it now follows that $\frac{1}{2^{k_1}}[\theta^5+\delta\theta^4+\delta^2\theta^3+\delta^3\theta^2+\delta^4\theta-5\delta^5]=\eta~ (say)$ is in $S_{(2)}$. Choose an integer $x_1$ such that 
$5(\frac{a}{2})x_1-2^u+3b\equiv 0~(mod~2^{k_1}).$ Consider the element  $\eta_1$ of $K$ defined by  \begin{equation}\label{eta}
\eta_1=\frac{\theta^5+x_1\theta^4+x_1^2\theta^3+x_1^3\theta^2+x_1^4\theta-5x_1^5}{2^{k_1}}.
\end{equation} Keeping in mind that $5(\frac{a}{2})\delta-2^u+3b=0$ by definition of $\delta$ and arguing as in case E13, we see that  $\eta_1\in S_{(2)}.$ Hence in view of Proposition 3.A, $\{1,\theta,\theta^2,\theta^3,\theta^4,\eta_1\}$ is a 2-integral basis of $K$. 

Consider now the subcase when $v_2(f(\delta))\geq 2u+3$.  Keeping in mind that $v_2(f'(\delta))=u+1$ proved in (\ref{eq E14}), it can be easily seen that the $\phi_2 $-Newton polygon of $f$ has two edges of positive slope, say $S_1'  $ and $S_2'$  joining the point (4, 0) with  (5, $u+1)$ and  the point (5,$ u+1$) with $(6, v_2(f(\delta)))$ respectively.  The polynomials associated with $f(x)$ with respect to both $(\phi_2,S_1')$ and $(\phi_2,S_2')$ are linear. So arguing as in the above subcase, we have  $v_2(ind~\theta)=\frac{v_2(D)-4}{2}$ and $\{1,\theta,\theta^2,\theta^3,\theta^4,\eta_1\}$ is a 2-integral basis of $K$ where $\eta_1$ is given by (\ref{eta}). Note that in both subcases $v_2(ind~\theta)=k_1=\frac{v_2(D)-4}{2}$.  \\
{ \bf Case E15 :} $v_2(a)=1,~b\equiv 1~(mod~4),~v_2(D)$  even, $D_2\equiv 3 ~ (mod~4)$.  By virtue of (\ref{eq:p1.1}) and the hypothesis, $v_2(D)\geq 8$ in this case. Here $f(x)\equiv (x^2+x+1)^2(x+1)^2~(mod~2).$ We take $\phi_1(x)=x^2+x+1$, $\beta=\frac{-6b}{5a}$, $v_2(f(\beta))=s_0$, $u=\frac{s_0}{2}$ and $\phi_2(x)=x-\delta$ where $\delta=\frac{2^u-3b}{5a_2}$ , $a_2=\frac{a}{2}$. Proceeding exactly in the case E14, we see that (\ref{eq E14}) holds, i.e., $v_2(f'(\delta))=u+1$ and $v_2(f(\delta))\geq 2u+1$. Note that in this case $v_2(f(\delta))=2u+1$ because $\frac{5^6a_2^6f(\delta)}{2^{2u}}\equiv bD_2-1~(mod~8)$ and $ bD_2-1\equiv 2~(mod~4)$  by hypothesis. Keeping in mind the values of $v_2(f(\delta)),~v_2(f'(\delta))$  and using (\ref{ga}), it can be easily seen that the  $\phi_2 $-Newton polygon of $f(x)$ has a single edge of positive slope, say $S' $  joining the point (4, 0) with (6, $2u+1)$ and the polynomial associated to $f(x)$ with respect to $(\phi_2,S')$ is linear. So $f(x)$ is 2-regular with respect to $\phi_1,~\phi_2$. In view of Theorem 3.C, we see that $v_2(ind~\theta)=i_{\phi_1}(f)+i_{\phi_2}(f)= \frac{v_2(D)-6}{2}=k_2$ (say).  Applying Proposition 3.E with $\phi(x)$ replaced by $\phi_2(x)$, $j$ by $1$ and arguing as in case E14, we see that the element  $\frac{1}{2^{k_2}}{[\theta^5+\delta\theta^4+\delta^2\theta^3+\delta^3\theta^2+\delta^4\theta-5\delta^5}]=\eta~ (say)$ is  2-integral. Choose an integer $x_2$ such that $
5(\frac{a}{2})x_2+3b\equiv 0~(mod~2^{k_2}).$
Consider the element $\eta_2$ of $K$ defined by  $$\eta_2=\frac{\theta^5+x_2\theta^4+x_2^2\theta^3+x_2^3\theta^2+x_2^4\theta-5x_2^5}{2^{k_2}}.$$ Arguing as in case E13, we see that  $\eta_2\in S_{(2)}$ and hence  $\{1,\theta,\theta^2,\theta^3,\theta^4,\eta_2\}$ is a 2-integral basis of $K$. \\
{ \bf Case E16 :} $v_2(a)\geq 2,~b\equiv 1~(mod~4)$. In this case, $v_2(D)=6$.  Set $\phi_1(x)=x^2+x+1$ and $\phi_2(x)=x+1$.  Using the $\phi_1$-expansion of $f$ given by (\ref{eq p1.26}) and the hypothesis, we see that the $\phi_1$-Newton polygon of $f(x)$ has a  single edge   of positive slope, say $S$  joining the point $(1, 0)$ with $(3, 1)$ and the  polynomial associated to $f(x)$ with respect to $(\phi_1,S)$ is linear. Similarly the $\phi_2$-expansion of $f$ given by (\ref{eq F17 2}) shows that the  $\phi_2$-Newton polygon of $f$ has a  single edge of positive slope, say $S_1$   joining the point (4, 0) with (6, 1) and the polynomial associated to $f(x)$ with respect to $(\phi_2,S_1)$ is linear. It now follows from Theorem 3.C that $v_2(ind~\theta)= i_{\phi_{1}}(f)+i_{\phi_{2}}(f)=0$. Hence $\{1,\theta,\theta^2,\theta^3,\theta^4,\theta^5\}$ is a 2-integral basis of $K$.\\
{ \bf Case E17 :} $v_2(a)\geq 2,~b\equiv 3~(mod~4)$. By virtue of (\ref{eq:p1.1}), we have $v_2(D)=6$. Set $\phi_1(x)=x^2+x+1$ and $\phi_2(x)=x+1$.  It is clear from the $\phi_1$-expansion of $f(x)$ given by (\ref{eq p1.26}) and the hypothesis that the $\phi_1$-Newton polygon of $f(x)$ is the convex hull of points (0, 0), (1, 0), (2, 1) and (3, min$\{v_2(a),v_2(b+1)\})$. Note that min$\{v_2(a),v_2(b+1)\}\geq 2$. In view of  $\phi_2$-expansion of $f(x)$ given by (\ref{eq F17 2}), we see that the $\phi_2$-Newton polygon of $f(x)$ is the convex hull of points (0, 0), (1, 1), (2, 0), (3, 2), (4, 0), (5, 1), (6, $v_2(-a+b+1)$). By virtue of hypothesis, we have $v_2(-a+b+1)\geq 2.$ The shapes of the $\phi_1$-Newton polygon and the $\phi_2$-Newton polygon  of $f(x)$ depend upon the values min$\{v_2(a),v_2(b+1)\}$ and $v_2(-a+b+1)$. The proof is split into four subcases depending upon these values.

Consider first the subcase when  min$\{v_2(a),v_2(b+1)\}=2$, $v_2(-a+b+1)=2$. Then the $\phi_1$-Newton polygon of $f(x)$ has a single edge of positive slope, say $S$  joining the points (1, 0) and (3, 2) with the lattice point (2, 1) lying on it. So the polynomial associated to $f(x)$ with respect to $(\phi_1,S)$ is $Y^2+Y+\overline{1}$. The supposition  $v_2(-a+b+1)=2$ implies that the $\phi_2$-Newton polygon of $f(x)$ has a single edge of positive slope, say $S'$  joining the points (4, 0) and (6, 2) with the lattice point (5, 1) lying on it. The polynomial associated to $f(x)$  with respect to $(\phi_1,S')$ is $Y^2+Y+\overline{1}$. So $f(x)$ is 2-regular with respect to $\phi_1,~\phi_2$. Therefore by Theorem 3.C, $v_2(ind~\theta)=i_{\phi_1}(f)+i_{\phi_2}(f)=3$. Applying Proposition 3.E with $\phi(x)$ replaced by $\phi_1(x)$ and $j$ by $1$, we see that $\frac{\theta^4-\theta^3+\theta-1}{2}=\gamma_1$ (say) belongs to $S_{(2)}$.  Again applying this proposition with $\phi(x)$ replaced by $\phi_2(x)=x+1$ and $j$ by $1$, we see that $\frac{\theta^5-\theta^4+\theta^3-\theta^2+\theta-1}{2}=\gamma_2~ (say)$ belongs to $S_{(2)}$. So $\gamma_2-\theta\gamma_1+\theta^2-\theta+1=\frac{\theta^3+1}{2}$ also belongs to $S_{(2)}$. Hence the set $\mathcal B:=  \{1,\theta,\theta^2,\frac{\theta^3+1}{2},~\frac{\theta^4+\theta}{2},~\frac{\theta^5+\theta^2}{2}\}$ is contained in $S_{(2)}$. Since $v_2(ind~\theta)=3$, it follows from Proposition 3.A that $\mathcal{B}$ is a 2-integral basis of $K$. 

Consider now the subcase when min$\{v_2(a),v_2(b+1)\}=2$, $v_2(-a+b+1)\geq 3$. In this subcase  the $\phi_1$-Newton polygon of  $f$ has a single edge of positive slope, say $S$  joining the points (1, 0) and (3, 2) with the lattice point (2, 1) lying on it. So the polynomial associated to $f(x)$ with respect to $(\phi_1,S)$ is $Y^2+Y+\overline{1}$. The $\phi_2$-Newton polygon of $f(x)$ has two edges of positive slope, say $S_1'  $ and $S_2'$  joining the point (4, 0) with  (5, 1) and the point (5, 1) with (6, $v_2(-a+b+1)) $ respectively. The polynomial associated to $f(x)$ with respect to $(\phi_2,S_i')$  is linear for $i=1,2$. So $f$ is 2-regular with respect to $\phi_1,~\phi_2$. Arguing as in the above subcase, it can be easily seen that $v_2(ind~\theta)=3$ and $\{1,\theta,\theta^2,\frac{\theta^3+1}{2},\frac{\theta^4+\theta}{2},\frac{\theta^5+\theta^2}{2}\}$ is a 2-integral basis of $K$.

Consider now the subcase when min$\{v_2(a),v_2(b+1)\}\geq 3$, $v_2(-a+b+1)=2$. The $\phi_1$-Newton polygon of $f$ has two edges of positive slope, say $S_1  $ and $S_2$  joining the point (1, 0) with  (2, 1) and the point  (2, 1) with (3, min$\{v_2(a),v_2(b+1)\}) $ respectively. The $\phi_2$-Newton polygon of $f(x)$ has a single edge of positive slope, say $S'$  joining the points (4, 0) and (6, 2) with the lattice  point (5, 1) lying on it. Arguing as in the first subcase, it can be easily seen that  $v_2(ind~\theta)=i_{\phi_1}(f)+i_{\phi_2}(f)=3$ and $\{1,\theta,\theta^2,\frac{\theta^3+1}{2},\frac{\theta^4+\theta}{2},\frac{\theta^5+\theta^2}{2}\}$ is a 2-integral basis of $K$. 

Finally consider the subcase when min$\{v_2(a),v_2(b+1)\}\geq 3$, $v_2(-a+b+1)\geq 3$. The $\phi_1$-Newton polygon of $f$ has two edges of positive slope, say $S_1  $ and $S_2$  joining the point (1, 0) with  (2, 1) and the point (2, 1) with (3, min$\{v_2(a),v_2(b+1)\}) $. The $\phi_2$-Newton polygon of $f$ has two edges of positive slope, say $S_1'  $ and $S_2'$  joining the point (4, 0) with  (5, 1) and the point (5, 1) with (6, $v_2(-a+b+1)) $ respectively. Arguing as in the first subcase it can be easily seen that  $v_2(ind~\theta)=3$ and $\{1,\theta,\theta^2,\frac{\theta^3+1}{2},\frac{\theta^4+\theta}{2},\frac{\theta^5+\theta^2}{2}\}$ is a 2-integral basis of $K$. Note that $v_2(ind~\theta)=3$ in each  subcase. This completes the proof in case E17.\\

It remains to deal with the situation when $v_2(b)=2,~v_2(a)\geq 2$ or $v_2(b)=4,~v_2(a)\geq 4$. We first show that in these situations there does not exist any $\beta\in\Q$ with $v_2(\beta)>0$ such that $f(x)$ is 2-regular with respect to $x-\beta$. 	Let $\beta$ be a rational number with $v_2(\beta)>0$. On expanding $f(x)$ as in (\ref{beta}),
 we see that the $(x-\beta)$-Newton polygon of $f(x)$ is lower convex hull of the set $$\mathcal P=\{(0, ~0),~ (1, ~1+v_2(\beta)),~ (2,~2v_2(\beta)),~ (3,~2+3v_2(\beta)), ~(4,~4v_2(\beta)),~ (5,~v_2(f'(\beta))),~ (6,~v_2(f(\beta)))\}.$$ Keeping in view the hypothesis $v_2(b)=2,~v_2(a)\geq 2$ and $v_2(\beta)>0$, it can be easily seen that in this situation the $(x-\beta)$-Newton polygon of $f(x)$ has a single edge, say $S$ joining the points (0, 0) and (6, 2) with no other point of $\mathcal P$ lying on it; consequently the polynomial associated to $f(x)$ with respect to $((x-\beta),~S)$ is $Y^2+\overline{1}$ which shows that $f(x)$ is not 2-regular with respect to ($x-\beta)$. Taking $\beta=0$, it follows from a well known result (cf. \cite[Chapter 2, Proposition 6.3]{Neu}, \cite[Proposition 3.6]{CMS} ) that $V_2(\theta)=\frac{1}{3}$ for any valuation $V_2$ of $K$ extending the $2$-adic valuation of $\Q$.  Therefore the factorisation of $2A_K$ into a product of prime ideals of $A_K$ will be one of the following types: either $2A_K=\wp^3,~N_{K/\mathbb{Q}}(\wp)=2^2$ or $2A_K=\wp^3\wp_1^3,~N_{K/\mathbb{Q}}(\wp)=N_{K/\mathbb{Q}}(\wp_1)=2$ or $2A_K=\wp^6,~N_{K/\mathbb{Q}}(\wp)=2$. Using a basic result (\cite[Theorem 4.24]{Nar}), we see that $2^4$ divides $d_K$ and that $2^6$ divides $d_K$ if and only if 2 is fully ramified in $K$.  It can be verified that the  situation is similar  if $v_2(b)=4,~v_2(a)\geq 4$. With these observations, we discuss the next case.\\
{ \bf Case E18 :} $v_2(a)=v_2(b)=2$. By virtue of (\ref{eq:p1.1}),  $v_2(D)=12$. The $x$-Newton polygon of $f(x)$ is the  line segment joining (0, 0) with (6, 2). There are exactly three points with positive integer coordinates  which lie on or below this Newton polygon and are away from the line $x=6$. So by Theorem 3.B., $v_2(ind~\theta)\geq 3$; consequently $v_2(d_K)\leq 6$. It will be shown in the following paragraph that  2 is fully ramified in $K$ which would imply in view of \cite[Theorem 4.24]{Nar} that $v_2(d_K)\geq 6$. Thus we would conclude that $v_2(d_K)=6$ and $v_2(ind~\theta)=3$. It remains to be shown that 2 is fully ramified in $K$.

Let $V_2$ be any prolongation of $v_2$ to $K$. Then by a well-known result (\cite[Chapter 2, Proposition 6.3]{Neu}), $V_2(\theta)=\frac{1}{3}$. Since $\theta^6+a\theta+b=0$, we have $$V_2\Big(\frac{\theta^6+b}{2 ^2}\Big)=V_2\Big(\frac{-a\theta}{2^2}\Big)=\frac{1}{3}.$$ If  $\frac{\theta^3}{2}$  and $\frac{b}{2^2}$ are denoted respectively by $\eta$ and $B$, then the  above equation can be rewritten as $$V_2(\eta^2+B)= V_2((\eta+B)^2-2\eta B+B-B^2)=\frac{1}{3}$$ which in view of the strong triangle law\footnote{It states that if $ \alpha,~\beta$ are elements of a valued field $(K,~v)$ with $v(\alpha)\neq v(\beta), $ then $v(\alpha+\beta)= min \left\lbrace v(\alpha), v(\beta)\right\rbrace$.} shows that $V_2((\eta+B)^2)=\frac{1}{3}$.  So $\frac{1}{6}$ belongs to the value group of $V_2$ and hence 2 is fully ramified in K. Applying Proposition 3.E taking $\phi(x)=x$ and $j=1,2,3$, we see that $\frac{\theta^5}{2},~\frac{\theta^4}{2},~\frac{\theta^3}{2}$ are 2-integral. Since $v_2(ind~\theta)=3$, it now  follows that $\{1,~ \theta,~ \theta^2~\frac{\theta^3}{2},~\frac{\theta^4}{2},~\frac{\theta^5}{2}\}$ is a 2-integral basis of $K$ in view of Proposition 3.A. This completes the proof in case E18.

Before taking up the remaining cases of Table I, we prove a simple lemma.
\begin{lemma}{\label{lemma p1}}
Let $\theta$ be a root of an irreducible trinomial $f(x)=x^6+ax+b $ belonging to $\Z[x]$. The following hold:\\
(i) When $v_2(b)=2,~v_2(a)\geq 3$, then $\frac{(\theta^3+2)\theta^2}{2^2}\in S_{(2)}.$\\
(ii) When $v_2(b)=2,~\frac{b}{4}\equiv 3~ (mod~4),~v_2(a)\geq 3+i$, then $\frac{(\theta^3+2)\theta^{1-i}}{2^2}\in S_{(2)}$ for $i=0,~1.$\\
(iii) When $v_2(b)=4,~v_2(a)\geq 4$, then $\frac{\theta^4+4\theta}{2^3}\in S_{(2)}.$\\
(iv) When $v_2(a)=v_2(b)=4$ and $\frac{b}{16}\equiv 1~ (mod~ 4)$, then $\frac{\theta^5+4\theta^2+8\theta}{2^4}\in S_{(2)}.$\\
(v) When $v_2(b)=4,~v_2(a)\geq 5$ and $\frac{b}{16}\equiv 3~ (mod~ 4)$, then $\frac{\theta^5+4\theta^2}{2^4}\in S_{(2)}.$\\
(vi
) When $v_2(b)=4,~v_2(a)\geq 6$ and $\frac{b}{16}\equiv 3~ (mod~ 4)$, then $\frac{\theta^3+4}{2^3}\in S_{(2)}.$\\
\end{lemma}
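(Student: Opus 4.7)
My plan is to verify each of the six claims by exhibiting the stated element $\eta\in K$ as a root of a monic polynomial in $\Z_{(2)}[Y]$, thereby placing $\eta$ in $S_{(2)}$. The main ingredients will be the defining relation $\theta^6=-a\theta-b$ and the elements already known to be $2$-integral by Proposition 3.E (applied to the $x$-Newton polygon of $f$).

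Under each of the six hypothesis patterns of the lemma, the $x$-Newton polygon of $f$ is a single edge from $(0,0)$ to $(6,v_2(b))$ of slope $v_2(b)/6\in\{1/3,\,2/3\}$, and its associated polynomial $T(Y)=Y^2+\overline{b/2^{v_2(b)}}\in\F_2[Y]$ is the perfect square $(Y+\overline 1)^2$. So $f$ is \emph{not} $2$-regular with respect to $\phi(x)=x$, and Theorem 3.C is not directly available. Proposition 3.E nevertheless yields the integrality of $\theta^j/2^{\lfloor jv_2(b)/6\rfloor}$ for $1\le j\le 5$; in particular $\theta^3/2,\theta^4/2,\theta^5/2\in S_{(2)}$ under (i)--(ii), and $\theta^3/4,\theta^4/4,\theta^5/8\in S_{(2)}$ under (iii)--(vi). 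The elements $\eta$ asserted by the lemma are precisely the ``missing'' $2$-integral elements encoding the second-order refinement of the Newton polygon at the repeated root of $T(Y)$.

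For each part I would then compute the minimal polynomial of $\eta$ over $\Q$ via the power sums $p_k=\sum_i\theta_i^k$ of the conjugates of $\theta$. By the shape of $f$ one has $p_1=\cdots=p_4=0$, $p_5=-5a$, $p_6=-6b$, and higher $p_k$ are obtained from the recursion $\theta^{k+6}=-a\theta^{k+1}-b\theta^k$. Expanding $\eta^m$ as a polynomial in $\theta$ and taking traces delivers the power sums of $\eta$; Newton's identities then translate these into the elementary symmetric functions, i.e.\ the coefficients of the minimal polynomial of $\eta$. Using the case-specific hypotheses on $v_2(a)$, $v_2(b)$ and (in cases (iv)--(vi)) on the residue $b/16\pmod 4$, one checks that each of these coefficients lies in $\Z_{(2)}$. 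A useful shortcut along the way is the identity $(\theta^3+c)^2=(c^2-b)-a\theta+2c\theta^3$, which with $c=2$ in (i)--(ii) gives $v_2(c^2-b)\ge 3$ and with $c=4$ in (iii)--(vi) gives $v_2(c^2-b)$ controlled by $b/16\bmod 4$, and thereby explains why the stated $\eta$ have the particular shape they do.

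The main obstacle is the case analysis in parts (iv)--(vi), where a naive choice of $\eta$ produces coefficients with negative $2$-adic valuation, and the extra cancellation must come from the congruence on $b/16$. This forces the specific ``corrector'' inside $\eta$---for example the $8\theta$ in $\theta^5+4\theta^2+8\theta$ in (iv), or the $4$ in $\theta^3+4$ in (vi)---to be exactly the term for which the problematic contribution in the traces of $\eta^m$ picks up one further factor of $2$. Identifying the correct corrector and checking the required congruences is routine but delicate $2$-adic bookkeeping; once it is carried out sub-case by sub-case, each of (i)--(vi) follows immediately.
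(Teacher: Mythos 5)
Your setup is sound and well matched to the paper's framework: you correctly observe that under each hypothesis the $x$-Newton polygon is a single edge with associated polynomial $(Y+\overline{1})^2$ over $\F_2$, so Theorem 3.C is unavailable, that Proposition 3.E supplies $\theta^j/2^{\lfloor jv_2(b)/6\rfloor}\in S_{(2)}$, and that the identity $(\theta^3+c)^2=(c^2-b)-a\theta+2c\theta^3$ is the engine behind the shape of the asserted elements. The difficulty is that what you submit is a strategy, not a proof: for none of the six parts do you actually verify that the proposed element is integral. The statement "checking the required congruences is routine but delicate $2$-adic bookkeeping; once it is carried out\ldots each of (i)--(vi) follows" defers precisely the content of the lemma, and in parts (iv)--(vi) that content is not routine. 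The paper's own proof works with the prolongations $V_2$ of $v_2$ to $K$, uses $V_2(\theta)=v_2(b)/6$ and repeated applications of the strong triangle law to pin down quantities such as $V_2(\theta^3\pm 4)$, and in part (iv) must additionally split on whether the value group of $V_2$ is $\frac{1}{3}\Z$ or $\frac{1}{6}\Z$ and invoke the fact that a fully ramified $V_2$ has residue field $\F_2$ to gain one extra factor of $2$ in $V_2(\xi^2)$. None of that is visible from your outline, and it is exactly where the congruences on $b/16$ enter.

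There is also a specific hazard in the route you propose that you do not address. At $p=2$ and degree $6$, integrality of the power sums $\mathrm{Tr}(\eta^m)$ does \emph{not} imply integrality of $\eta$: Newton's identities recover $e_k$ from the $p_m$ only after dividing by $k$, so for $k=2,4,6$ you lose factors of $2$ (for instance $\eta$ with $p_1=0$, $p_2=1$ can have $e_2=-1/2$). Hence you cannot stop at showing the traces lie in $\Z_{(2)}$; you must establish the stronger divisibilities needed for the elementary symmetric functions themselves, and for $m=6$ this means controlling a numerator against a denominator of $2^{24}$. A cleaner executable version of your idea, closer in spirit to the paper, is to square: e.g.\ for (i), $\bigl(\tfrac{(\theta^3+2)\theta^2}{4}\bigr)^2=\tfrac{(4-b)\theta^4-a\theta^5-4a\theta^2-4b\theta}{16}$, and each summand is visibly a $\Z_{(2)}$-multiple of an element already known to be $2$-integral, whence $\eta^2\in S_{(2)}$ and $\eta$ is integral by transitivity. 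But even that must be carried out case by case, with the congruence on $b/16$ doing real work in (iv)--(vi); as written, your argument has a genuine gap.
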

\begin{proof} 
For proving assertion (i), it is enough to prove that $V_2((\theta^3+2)\theta^2)\geq 2$ for each valuation $V_2$ of $K$ extending the 2-adic valuation $v_2$ of $\Q$ in view of a basic result (cf. \cite[Chapter 3, Section 4, Theorem 6]{Bor}). Let $V_2$ be such a valuation of $K$. Since the $x$-Newton polygon of $f(x)$ is the segment joining (0, 0) with (6, 2) , we have $V_2(\theta)=\frac{1}{3}$ by virtue of  \cite[Chapter 2, Proposition 6.3]{Neu}. In the proof of this lemma, $\eta$ will stand for $\frac{\theta^3}{2}$. Then 
\begin{equation}\label{lm p1}
V_2\Big(\eta^2+\frac{b}{4}\Big)=V_2\Big(\frac{\theta^6}{4}+\frac{b}{4}\Big)=V_2\Big(\frac{-a\theta}{4}\Big)=v_2(a)-2+\frac{1}{3}.
\end{equation}
To verify the desired inequality, we discuss two cases.

 Consider first the possibility when $\frac{b}{4}\equiv 1~ (mod~4)$, $v_2(a)\geq 3 $. Then (\ref{lm p1}) together with the hypothesis $v_2(a)\geq 3 $ shows that $V_2(\eta^2+1)\geq \frac{4}{3}$, i.e., $V_2((\eta+1)^2-2\eta)\geq \frac{4}{3}$. Hence in view of the strong triangle law, we have  $V_2((\eta+1)^2)=V_2(2\eta)=1$. Therefore 
\begin{equation}\label{lm p2}
V_2\Big(\frac{\theta^3}{2}+1\Big)=V_2(\eta+1)=\frac{1}{2};
\end{equation}
consequently $V_2((\theta^3+2)\theta^2)=2+\frac{1}{6}$ 	which proves the desired inequality.

Consider now the situation when $\frac{b}{4}\equiv 3~ (mod~4),~v_2(a)\geq 3$. Then (\ref{lm p1}) together with the hypothesis $v_2(a)\geq 3 $ gives  $V_2(\eta^2+3)\geq \frac{4}{3}$ which implies that
\begin{equation}\label{lm p3}
V_2((\eta+1)^2-2(\eta+1))\geq \frac{4}{3}.
\end{equation}
The above inequality shows that $V_2(\eta+1)\geq \frac{1}{3}$, for otherwise $V_2((\eta+1)^2)\neq V_2(2(\eta+1))$; consequently in view of the strong triangle law, we would have $$V_2((\eta+1)^2-2(\eta+1))=min\{V_2((\eta+1)^2),1+V_2(\eta+1)\}<\frac{4}{3}$$ which contradicts (\ref{lm p3}). Thus $V_2(\eta+1)\geq \frac{1}{3}$ and hence $V_2((\theta^3+2)\theta^2)\geq 2$.\\

The proof of assertion (ii) is split into two cases. Consider first the case when $v_2(a)=3,~\frac{b}{4}\equiv 3~ (mod~4).$ Let $V_2$ be any prolongation of $v_2$ to $K$. Then using (\ref{lm p1}) and arguing as in the above paragraph, we have \begin{equation}\label{lm p4}
V_2((\eta+1)^2-2(\eta+1))= \frac{4}{3}.
\end{equation}
Note that $V_2((\eta+1)^2)\neq V_2(2(\eta+1))$, for otherwise $V_2(\eta+1)=1$ and hence we would have, $V_2((\eta+1)^2-2(\eta+1))\geq 2$ which is not so. It now follows from (\ref{lm p4}) that $$\frac{4}{3}=min~\{2V_2(\eta+1),~1+ V_2(\eta+1)\};$$ in fact this minimum equals $2V_2(\eta+1)$, because otherwise $V_2(\eta+1)=\frac{1}{3}$ which again contradicts (\ref{lm p4}). Thus
 $V_2(\eta+1)=\frac{2}{3} $
 and hence by the strong triangle law
 \begin{equation}\label{lm p5}
 V_2(\frac{\theta^3}{2}+1)=\frac{2}{3}=V_2(\frac{\theta^3}{2}-1)
 \end{equation}
 which proves that  $V_2(\theta^4+2\theta)= 2$ in this case.\\
 
 Consider now the case when $v_2(a)\geq 4,~\frac{b}{4}\equiv 3~ (mod~4).$  Then using (\ref{lm p1}) and arguing as for the proof of (\ref{lm p2}) , we see that \begin{equation}\label{lm p6}
 V_2((\eta+1)^2-2(\eta+1))\geq 2.
 \end{equation} 
 The above inequality implies that $V_2(\eta+1)\geq 1$,  for otherwise $V_2((\eta+1)^2)\neq V_2(2(\eta+1))$;  consequently we would have $$V_2((\eta+1)^2-2(\eta+1))=min~\{2V_2(\eta+1),~1+ V_2(\eta+1)\}<2$$ contrary to (\ref{lm p6}). Thus $V_2(\eta+1)\geq 1$ and hence $V_2(\theta^3+2)\geq 2$ as desired.\\
 
 We now prove assertion (iii). In what follows, we denote $\frac{\theta^3}{4}$ by $\psi$. Since  the $x$-Newton polygon of $f(x)$ is the line segment joining (0, 0) with (6, 4) , we have  $V_2(\theta)=\frac{2}{3}$  for each valuation $V_2$ of $K$ extending the valuation $v_2$ of $\Q$ in view of \cite[Chapter 2, Proposition 6.3]{Neu}. Note that 
 \begin{equation}\label{lm p7}
 V_2\Big(\psi^2+\frac{b}{16}\Big)=V_2\Big(\frac{\theta^6}{16}+\frac{b}{16}\Big)=V_2\Big(\frac{-a\theta}{16}\Big)=v_2(a)-4+\frac{2}{3}.
 \end{equation}
 By hypothesis $v_2(a)\geq 4$. So the above equation implies that  $V_2\Big(\psi^2+\frac{b}{16}\Big)\geq \frac{2}{3}$. Using this inequality and arguing exactly as for the proof of assertion (i), one can prove this assertion splitting the proof into two subcases viz when $\frac{b}{16}\equiv 1$ or $3~ (mod~4)$.\\
 
 Next we prove assertion (iv). Let $V_2$ be as above. In view of (\ref{lm p7}) and the hypothesis $v_2(a)=4,~\frac{b}{16}\equiv 1 ~ (mod ~4)$, we see that $V_2(\psi^2+1)=\frac{2}{3}$, i.e., $V_2(\theta^6+16)=\frac{14}{3}$. Keeping in mind the strong triangle law, the last equality immediately gives $V_2(\theta^6-16)=\frac{14}{3}$ and hence \begin{equation}\label{lm p8}
  V_2(\theta^3-4)=V_2(\theta^3+4)=\frac{7}{3}.
 \end{equation}
 For proving assertion (iv), it is to be shown that \begin{equation}\label{lm p9}
 V_2(\theta^4+4\theta+8)\geq \frac{10}{3}.
 \end{equation}
 Denote $\theta^4+4\theta+8$ by $\xi$. Keeping in mind that the value group $G_2$ (say) of $V_2$ is either $\frac{1}{3}\Z$ or $\frac{1}{6}\Z$, the desired inequality (\ref{lm p9}) is proved as soon as it is shown that $V_2(\xi^2)\geq \frac{19}{3}$ and that $V_2(\xi^2)\geq \frac{20}{3}$ when $G_2=\frac{1}{6}\Z$, because $V_2(\xi^2)=\frac{19}{3}$ is possible only when $G_2=\frac{1}{6}\Z$. Since $\xi^2=\theta^8+8\theta^5+16\theta^4+16\theta^2+64\theta+64$. On replacing $\theta^8$ by $-a\theta^3-b\theta^2$, we have  $$\xi^2=16\theta^2~\big(\frac{-b}{16}+1\big)+(64\theta+16\theta^4)+16~\big(\frac{-a}{16}\theta^3+4\big)+8\theta^5.$$ By hypothesis $\frac{b}{16}\equiv 1 ~ (mod ~4)$. So $V_2(16\theta^2~\big(\frac{-b}{16}+1\big))\geq \frac{22}{3}.$ By virtue of (\ref{lm p8}), $V_2(64\theta+16\theta^4)={7}$.  Using (\ref{lm p8}) and the hypothesis that $\frac{a}{16}$ is odd, we have $V_2(16~\big(\frac{-a}{16}\theta^3+4\big))=V_2(16(\theta^3+ 4))=\frac{19}{3};$ also $V_2(8\theta^5)=\frac{19}{3}$. It now follows that 
$V_2(\xi^2)\geq\frac{19}{3};$
 further $V_2(\xi^2)>\frac{19}{3} $ when $2$ is fully ramified in $K$ because in this situation the residue field of $V_2$ is $\F_2$ and hence for any pair of elements $\gamma,~\gamma'$ in $K$ with $V_2(\gamma)= V_2(\gamma')$, one has $V_2(\gamma+\gamma')>V_2(\gamma')$. Thus we have shown that $V_2(\xi^2)\geq \frac{20}{3}$ whether the value group  of $V_2$ is $\frac{1}{3}\Z$ or $\frac{1}{6}\Z$. This proves (\ref{lm p9}) and hence assertion (iv).\\
 
 We now prove assertion (v). Let $V_2$ be any valuation of $K$ extending $v_2$. As in proof of assertion (iii), let $\psi$ stand for $\frac{\theta^3}{4}$. Using the hypothesis $v_2(a)\geq 5, ~\frac{b}{16}\equiv 3 ~(mod~4)$, it follows from (\ref{lm p7}) that $V_2(\psi^2+3)\geq \frac{5}{3}$, which implies that \begin{equation}\label{lm p11}
 V_2((\psi+1)^2-2(\psi+1))\geq\frac{5}{3}.
 \end{equation}  
 We infer from the above inequality that $V_2(\psi+1)\geq \frac{2}{3}$, for otherwise $V_2((\psi+1)^2)\neq V_2(2(\psi+1))$ and consequently $V_2((\psi+1)^2-2(\psi+1))=min~\{V_2((\psi+1)^2),~ 1+V_2(\psi+1)\}<\frac{5}{3}$ which contradicts (\ref{lm p11}). This contadiction proves that $V_2(\psi+1)\geq \frac{2}{3}$ i.e., $V_2(\frac{\theta^5+4\theta^4}{2^4})\geq 0$ as desired. 
 
  Finally we prove assertion (vi). By virtue of (\ref{lm p7}) and the hypothesis, we have $V_2(\theta^6-16)\geq 6$. Hence by the strong triangle law $V_2(\theta^3+4)=V_2(\theta^3-4)\geq 3$. This completes the proof of the lemma.
\end{proof}
We now discuss the  remaining cases of Table I.\\
{ \bf Case E19 :} $v_2(a)=3,~v_2(b)=2,~\frac{b}{4}\equiv 3~(mod~4)$. In view of (\ref{eq:p1.1}), $v_2(D)=16$.  Since the $x$-Newton polygon of $f(x)$ is the line  segment joining the points (0, 0) and (6, 2), $V_2(\theta)=\frac{1}{3}$ for each valuation $V_2$ of $K$ extending $v_2$. In view of Proposition 3.E and Lemma \ref{lemma p1}, the set $\mathcal B:=\{1,\theta,\theta^2,\frac{\theta^3}{2},\frac{\theta^4+2
\theta}{2^2},\frac{\theta^5+2\theta^2}{2^2}\}$ is contained in $S_{(2)}$. So $v_2(ind~\theta)\geq 5$ and consequently $v_2(d_K)\leq 6.$ 	We will prove that $2$ is fully ramified in $K$ which  implies that $v_2(d_K)\geq 6$. So we would conclude that $v_2(d_K)=6,~v_2(ind~\theta)=5$ and $\mathcal B$ would be a 2-integral basis of $K$  by virtue of Proposition 3.A. Let $V_2$ be as above. It will be shown that \begin{equation}\label{eq E19}
V_2(4+2\theta+\theta^4)=\frac{13}{6}. \end{equation}
  Denote $4+2\theta+\theta^4$ by $\eta_1$. On taking square and writing $\theta^8$ as $-a\theta^3-b\theta^2$,~$\frac{b}{4}$ as $3+4u_1$, we see that  $$\eta_1^2=4\theta^2(\theta^3-2)-8(\frac{a}{8}\theta^3-2)+(8\theta^4+16
\theta)-16u_1\theta^2.$$ Using (\ref{lm p5}) and the fact that $\frac{a}{8}$ is odd, we see that $V_2(4\theta^2(\theta^3-2))=\frac{13}{3}$,$~V_2(8(\frac{a}{8}\theta^3-2))=\frac{14}{3}$ and $V_2(8\theta^4+16
\theta)=5$. Since $V_2(16\theta^2)=\frac{14}{3},$ it now follows from the strong triangle law that $V_2(\eta_1^2)=\frac{13}{3}$ which proves (\ref{eq E19}).\\
{ \bf Case E20 :} $v_2(b)=2,~v_2(a)\geq 4,~\frac{b}{4}\equiv 3~(mod~4)$. It follows from (\ref{eq:p1.1}) that $v_2(D)=16$. By Lemma \ref{lemma p1}, the set $\mathcal B:=\{1,\theta,\theta^2,\frac{\theta^3+2}{2^2},\frac{(\theta^3+2
)\theta}{2^2},\frac{(\theta^3+2
)\theta^2}{2^2}\}$ is contained in $S_{(2)}$. Consequently $v_2(ind~\theta)\geq 6$ and hence $v_2(d_K)\leq 4$. But as pointed out in the paragraph preceding case E18, we have $v_2(d_K)\geq 4$. Thus $v_2(d_K)= 4$, $v_2(ind~\theta)= 6$ and $\mathcal B$ is a 2-integral basis of $K$ in view of Proposition 3.A.\\
{ \bf Case E21 :} $v_2(b)=2,~v_2(a)\geq 3,~\frac{b}{4}\equiv 1~(mod~4)$. Let $V_2$ be any valuation of $K$ extending $v_2$. It has been shown in (\ref{lm p2}) that in the present case we have $V_2(\frac{\theta^3}{2}+1)=\frac{1}{2}$. Since $V_2(\theta)=\frac{1}{3}$, it follows that the value group of $V_2$ is $\frac{1}{6}\Z$. Thus $2$ is fully ramified in $K$ and hence $v_2(d_K)\geq 6$. By virtue of  (\ref{eq:p1.1}), we have $v_2(D)=16$. It now follows that  $v_2(ind~\theta)\leq 5$. Using Proposition 3.E and Lemma \ref{lemma p1}, we see that  the set $\mathcal B:=\{1,\theta,\theta^2,\frac{\theta^3}{2},\frac{\theta^4
}{2},\frac{(\theta^3+2
)\theta^2}{2^2}\}$ is contained in $S_{(2)}$ which shows that $v_2(ind~\theta)\geq 4$. Claim is that $v_2(ind~\theta)=4$ and hence $\mathcal B$ will be a 2-integral basis of $K$. Keeping in mind Proposition 3.A, it is now clear that the claim is proved once we prove the following assertions.\\
(i) There do not exist any integers $x,~y,~z,~v$ such that $\frac{1}{2^2}[x+y\theta+z\theta^2+v\theta^3+\theta^4]$ belongs to $S_{(2)}.$\\
(ii) There do not exist any integers $x,~y,~z,~v,~w$ for which $\frac{1}{2^3}[x+y\theta+z\theta^2+v\theta^3+w\theta^4+\theta^5]$ belongs to $S_{(2)}.$

To prove (i) suppose to the contrary that there exists  $\xi_1=\frac{1}{4}[x+y\theta+z\theta^2+v\theta^3+\theta^4]$ in $S_{(2)}$ with  $x,~y,~z,~v$ in $\Z$. Then we will have $v_2(ind~\theta)=5$ and the family $\mathcal C_1:=\{1,\theta,\theta^2,\frac{\theta^3}{2},\xi_1,\frac{(\theta^3+2
)\theta^2}{2^2}\}$ will be a 2-integral basis of $K$ in view of Proposition 3.A. So the element $2\xi_1-\frac{\theta^4}{2}=\frac{1}{2}[x+y\theta+z\theta^2+v\theta^3]$ of $S_{(2)}$ would be expressible as linear combination of members of $\mathcal C_1$ with coefficients from $\Z_{(2)}$. This implies that $x,~y,~z$ are all even, say $x=2x_1,~y=2y_1,~z=2z_1$. So \begin{equation}\label{eq E21 1}
\xi_1=\frac{1}{2}(x_1+y_1\theta+z_1\theta^2)+\Big(\frac{v}{2}\Big)\frac{\theta^3}{2}+\frac{\theta^4}{2^2}.
\end{equation}
The proof of the assertion that $\xi_1\not\in S_{(2)}$ is split into two subcases.

Consider first the subcase when $v$ is even  say $v=2v_1$. On rewriting (\ref{eq E21 1}) as $$\xi_1-v_1\Big(\frac{\theta^3}{2}\Big)=\frac{1}{2}(x_1+y_1\theta+z_1\theta^2)+\frac{\theta^4}{2^2},$$ we see that  $S_{(2)}$ contains an element of the type $\frac{1}{2^2}[2x'+2y'\theta+2z'\theta^2+\theta^4]$ for some $x',~y',~z'\in \{0,1\}$.  Keeping in mind (\ref{lm p2}), it can be easily checked that for  $x',~y',~z'\in\{0,1\}$, $V_2(2x'+2y'\theta+2z'\theta^2+\theta^4)$ equals $1$ or $\frac{4}{3}$ or $\frac{10}{6}$ or $\frac{11}{6}$ according as $x'=1$ or the tuple $(x',~y')=(0,~0)$ or $(x',~y',~z')=(0,~1,~1)$ or $(x',~y',~z')=(0,~1,~0)$ .   So no element of the type $\frac{1}{2^2}(2x'+2y'\theta+2z'\theta^2+\theta^4)$ belongs to $S_{(2)}$ and hence assertion (i) is proved in this situation.

Consider now the subcase when $v=2v_1'+1$ (say) is odd. On substituting for $v $ in (\ref{eq E21 1}), we see that $\xi_1-v_1'\Big(\frac{\theta^3}{2}\Big)=\frac{1}{2}(x_1+y_1\theta+z_1\theta^2)+\frac{\theta^3}{2^2}+\frac{\theta^4}{2^2}$  is an element of $S_{(2)}$. Thus $S_{(2)}$ contains an element of the type $\frac{1}{2^2}[2x'+2y'\theta+2z'\theta^2+\theta^3+\theta^4]$ for some $x',~y',~z'\in \{0,1\}$. It can be easily seen that $x'$ can not be 0 because in that situation  $V_2(2y'\theta+2z'\theta^2+\theta^3+\theta^4)=1$ as $\theta^3$ is the only term with minimum valuation. If $x'=1$, then in view of (\ref{lm p2}), $V_2(2+2y'\theta+2z'\theta^2+\theta^3+\theta^4)$ equals $\frac{4}{3}$ or $\frac{3}{2}$ according as $y'=0$ or $y'=1$. So no element of the type $\frac{1}{2^2}[2x'+2y'\theta+2z'\theta^2+\theta^3+\theta^4]$  can belong to $S_{(2)}$. This completes the proof of assertion (i).

To prove asserion (ii) towards the claim, suppose to the contrary that there exists integers $x,~y,~z,~v,~w$ such that the element  $\frac{1}{2^3}[x+y\theta+z\theta^2+v\theta^3+w\theta^4+\theta^5]=\xi$ (say) belongs to $S_{(2)}.$ So the family $\mathcal C:=\{1,\theta,\theta^2,\frac{\theta^3}{2},\frac{\theta^4}{2},\xi\}$ will be  a 2-integral basis of $K$. Consider the element $2\xi-\big(\frac{\theta^5+2\theta^2}{2^2}\big)$ of $S_{(2)}$ and write it as a $Z_{(2)}$-linear combination of elements of $\mathcal C$. Since $$2\xi-\Big(\frac{\theta^5+2\theta^2}{2^2}\Big)=\frac{x}{4}+\Big(\frac{y}{4}\Big)\theta+\Big(\frac{z-2}{4}\Big)\theta^2+\Big(\frac{v}{2}\Big)\frac{\theta^3}{2}+\Big(\frac{w}{2}\Big)\frac{\theta^4}{2},$$ it follows that 4 divides $x,~y,~z-2$ and $v,~w$ are even. Write $x=4x_1,~y=4y_1,~z=4m+2,~v=2v_1,~w=2w_1$. Thus\begin{equation}\label{eq E21 3}
\xi=\frac{1}{2^3}[4x_1+4y_1\theta+(4m+2)\theta^2+2v_1\theta^3+2w_1\theta^4+\theta^5].
\end{equation}
We split the proof of this assertion into four subcases.

Consider first the subcase when $v_1,~w_1$ are both even say $v_1=2v_1',~w_1=2w_2 $. Substituting for $v_1,~w_1$, we can write (\ref{eq E21 3}) as $$\xi-v_1'\Big(\frac{\theta^3}{2}\Big)-w_2\Big(\frac{\theta^4}{2}\Big)=\frac{1}{8}[4x_1+4y_1\theta+(4m+2)\theta^2+\theta^5].$$ So $S_{(2)}$ contains an element of the type $\frac{1}{8}[4x'+4y'\theta\pm 2\theta^2+\theta^5]$ for some $x',~y'\in\{0,1\}$. Such an element does not belong to  $S_{(2)}$ because by virtue of (\ref{lm p2}), $V_2(\theta^3-2)=V_2(\theta^3+2)=\frac{3}{2}$ and hence $V_2(4x'+4y'\theta\pm 2\theta^2+\theta^5)$ equals $2$ or $\frac{13}{6}$ according as $x'=1$ or $x'=0$ . This contradiction proves assertion (ii) in the present subcase.

 Consider now the subcase when $v_1$ is even and $w_1$ is odd, say $v_1=2v_1',~w_1=2k+1 $. Substituting for $v_1,~w_1$, we can rewrite (\ref{eq E21 3})  as $$\xi-v_1'\Big(\frac{\theta^3}{2}\Big)-k\Big(\frac{\theta^4}{2}\Big)=\frac{1}{8}[4x_1+4y_1\theta+(4m+2)\theta^2+2\theta^4+\theta^5].$$ The above equation shows that $S_{(2)}$ contains an element of the type $\frac{1}{8}[4x'+4y'\theta\pm 2\theta^2+2\theta^4+\theta^5]$ for some $x',~y'\in\{0,1\}$. Keeping in mind that $V_2(\theta^3\pm 2)=\frac{3}{2}$ in view of  (\ref{lm p2}), it can be seen that  $V_2(4x'+4y'\theta+2\theta^4\pm 2\theta^2+\theta^5)$ equals $2$ or $\frac{13}{6}$ according as $x'=1$ or $x'=0.$ This contradiction proves assertion (ii) in the present subcase.
 
  Consider now the subcase when $v_1$ is odd and $w_1$ is even, say $v_1=2v_1''+1,~w_1=2w_2$. Now we write (\ref{eq E21 3})  as $$\xi-v_1''\Big(\frac{\theta^3}{2}\Big)-w_2\Big(\frac{\theta^4}{2}\Big)=\frac{1}{8}[4x_1+4y_1\theta+(4m+2)\theta^2+2\theta^3+\theta^5]$$ which shows that $S_{(2)}$ contains an element of the type  $\frac{1}{8}[4x'+4y'\theta\pm 2\theta^2+2\theta^3+\theta^5]$ for some $x',~y'\in\{0,1\}$. Using (\ref{lm p2}), one can check that $V_2(4x'+2\theta^3+4y'\theta\pm 2\theta^2+\theta^5)$ equals $2$ or $\frac{13}{6}$ according as $x'=0$ or $x'=1$ and hence assertion (ii) is proved in this subcase.
  
   Finally we consider the subcase when $v_1,~w_1$ are both odd, say $v_1=2v_1''+1,~w_1=2k+1$. In this situation (\ref{eq E21 3}) will imply that  $S_{(2)}$ contains an element of the type $\frac{1}{8}[4x'+4y'\theta\pm 2\theta^2+2\theta^3+2\theta^4+\theta^5]$ for some $x',~y'\in\{0,1\}$. Keeping in view  (\ref{lm p2}), it can be seen that $V_2(4x'+2\theta^3+4y'\theta+2\theta^4\pm 2\theta^2+\theta^5)$ equals $2$ or $\frac{13}{6}$ according as $x'=0$ or $x'=1$. This contradiction completes the proof of the assertion (ii) and hence the result is proved in case E21.\\
{ \bf Case E22 :} $v_2(a)=v_2(b)=4,~\frac{b}{16}\equiv 1~(mod~4)$. It is clear from (\ref{eq:p1.1}) that $v_2(D)=24$. Note that the $x$-Newton polygon of $f(x)$ is the line segment joining the points (0, 0) and (6, 4). On applying Proposition 3.E and Lemma  \ref{lemma p1}, we see that the set $\mathcal B:=\{1,\theta,\frac{\theta^2}{2},\frac{\theta^3}{2^2},\frac{\theta^4+4
\theta}{2^3},\frac{\theta^5+4\theta^2+8\theta}{2^4}\}$ is contained in $S_{(2)}$. So $v_2(ind~\theta)\geq 10$. Since $v_2(d_K)\geq 4$, we have $v_2(ind~\theta)=\frac{v_2(D)-v_2(d_K)}{2}\leq 10$. So we conclude that $v_2(ind~\theta)=10$ and $\mathcal B$ is a 2-integral basis of $K$.\\
{ \bf Case E23 :} $v_2(a)=v_2(b)=4,~\frac{b}{16}\equiv 3~(mod~4)$. It is immediate from (\ref{eq:p1.1}) that $v_2(D)=24$. Since the $x$-Newton polygon of $f(x)$ is the line  segment joining the points (0, 0) and (6, 4), $V_2(\theta)=\frac{2}{3}$ for each valuation $V_2$ of $K$ extending  $v_2$. It follows from Proposition 3.E and Lemma  \ref{lemma p1} that the set $\mathcal B:=\{1,\theta,\frac{\theta^2}{2},\frac{\theta^3}{2^2},\frac{\theta^4+4
\theta}{2^3},\frac{\theta^5+4\theta^2}{2^3}\}$ is contained in $S_{(2)}$. So $v_2(ind~\theta)\geq 9$ and hence $v_2(d_K)\leq 6$. We shall prove that $2$ is fully ramified in $K$ which would imply that $v_2(d_K)\geq 6$. Thus we would conclude that $v_2(d_K)= 6$,  $v_2(ind~\theta)=9$ and in view of  Proposition 3.A, $\mathcal B$ is a 2-integral basis of $K$.

  It will be shown that for any valuation $V_2$ of $K$ extending $v_2$, we have   \begin{equation}\label{eqq E19 2}
V_2(4+2\theta^2+\theta^3)=\frac{5}{2}; \end{equation}
this together with the fact that $V_2(\theta)=\frac{2}{3}$ will imply that the value group of $V_2$ is $\frac{1}{6}\Z$. We first show that  \begin{equation}\label{lm pp8}
  V_2(\theta^3-4)=V_2(\theta^3+4)=\frac{7}{3}.
 \end{equation} Using (\ref{lm p7}) and the hypothesis $v_2(a)=4,~\frac{b}{16}\equiv 3 ~ (mod ~4)$, we see that $V_2(\frac{\theta^6}{16}-1)=\frac{2}{3}$, i.e., $V_2(\theta^6-16)=\frac{14}{3}$. The last equality by virtue  of the strong triangle law immediately gives (\ref{lm pp8}).

Denote $4+2\theta^2+\theta^3$ by $\eta_2$. On taking square and writing $\theta^6$ as $-a\theta-b$, we see that $$\eta_2^2=4\theta^2(\theta^3+4)+4\theta(\theta^3-\frac{a}{4})+8(\theta^3-\frac{b}{8}+2).$$ Write $\frac{b}{16}$ as $4u_1+3$ and  $\frac{a}{16}$ as $2A+1$ with $u_1,~A$ in $\Z$. Using (\ref{lm pp8}), we have $V_2(4\theta^2(\theta^3+4))=\frac{17}{3}, $ $V_2(4\theta(\theta^3-\frac{a}{4}))=\frac{8}{3}+V_2(\theta^3-4-8A)=5$ and $ V_2(8(\theta^3-\frac{b}{8}+2)=3+V_2(\theta^3-8u_1-4)=\frac{16}{3}.$ It now follows using the strong triangle law that $V_2(\eta_2^2)=5$ which  proves (\ref{eqq E19 2}) and hence  the result in this case.\\
 { \bf Case E24 :} $v_2(a)=5,~v_2(b)=4,~\frac{b}{16}\equiv 3~(mod~4)$. It is immediate  from (\ref{eq:p1.1}) that $v_2(D)=26$. As in case E23, $V_2(\theta)=\frac{2}{3}$ for each valuation $V_2$ of $K$ extending  $v_2$. On applying Proposition 3.E and Lemma  \ref{lemma p1}, we see that the set $\mathcal B:=\{1,\theta,\frac{\theta^2}{2},\frac{\theta^3}{2^2},\frac{\theta^4+4
 \theta}{2^3},\frac{\theta^5+4\theta^2}{2^4}\}$ is contained in $S_{(2)}$. So $v_2(ind~\theta)\geq 10$ which implies $v_2(d_K)\leq 6$. We will prove that $2$ is fully ramified in $K$ which would imply that $v_2(d_K)\geq 6$. Thus we would conclude that $v_2(d_K)= 6$,  $v_2(ind~\theta)=10$ and by Proposition 3.A, $\mathcal B$ is a 2-integral basis of $K$.

 Let $V_2$ be a valuation of $K$ extending $v_2$.  Since $\theta^6+a\theta+b=0$, denoting $\frac{\theta^3}{4}$ by $\psi$, we have
 \begin{equation*}\label{lm pp7}
  V_2\Big(\psi^2+\frac{b}{16}\Big)=V_2\Big(\frac{\theta^6}{16}+\frac{b}{16}\Big)=V_2\Big(\frac{-a\theta}{16}\Big)=\frac{5}{3}.
  \end{equation*}
  By hypothesis $\frac{b}{16}\equiv 3~(mod~4)$. Therefore the above equation implies that $V_2(\psi^2-1)=\frac{5}{3}$, i.e., \begin{equation}\label{lm pp4}
V_2((\psi+1)^2-2(\psi+1))= \frac{5}{3}.
\end{equation}
Note that $V_2((\psi+1)^2)\neq V_2(2(\psi+1))$, for otherwise $V_2(\psi+1)=1$ which would imply $V_2((\psi+1)^2-2(\psi+1))\geq 2$ contradicting (\ref{lm pp4}). It now follows from (\ref{lm pp4}) and the strong triangle law that $\frac{5}{3}=min~\{V_2((\psi+1)^2),~1+ V_2(\psi+1)\};$ in fact this minimum equals $V_2((\psi+1)^2)$, because otherwise $V_2(\psi+1)=\frac{2}{3}$ which again contradicts (\ref{lm pp4}). Thus $V_2((\psi+1)^2-2(\psi+1))=V_2((\psi+1)^2)= \frac{5}{3}$ and hence $\frac{5}{6}$ belongs to the value group of $V_2$ as asserted.\\
{ \bf Case E25 :} $v_2(a)\geq 6,~v_2(b)=4,~\frac{b}{16}\equiv 3~(mod~4)$. By virtue of (\ref{eq:p1.1}), we have $v_2(D)=26$. On applying Proposition 3.E and Lemma  \ref{lemma p1}, we see that the set $\mathcal B:=\{1,\theta,\frac{\theta^2}{2},\frac{\theta^3+4}{2^3},\frac{\theta^4+4
\theta}{2^3},\frac{\theta^5+4\theta^2}{2^4}\}$ is contained in $S_{(2)}$. So $v_2(ind~\theta)\geq 11$. Since $v_2(d_K)\geq 4$, we conclude that $v_2(ind~\theta)\leq 11$. Therefore we have $v_2(ind~\theta)=11$ and by Proposition 3.A, $\mathcal B$ is a 2-integral basis of $K$.\\
{ \bf Case E26 :} $v_2(a)\geq 5,~v_2(b)=4,~\frac{b}{16}\equiv 1~(mod~4)$. It follows from (\ref{eq:p1.1}) that $v_2(D)=26$. As in case E23, $V_2(\theta)=\frac{2}{3}$ for each valuation $V_2$ of $K$ extending $v_2$.   In view of Proposition 3.E and Lemma \ref{lemma p1}, the set $\mathcal B:=\{1,\theta,\frac{\theta^2}{2},\frac{\theta^3}{2^2},\frac{\theta^4+4
\theta}{2^3},\frac{\theta^5}{2^3}\}$ is contained in $S_{(2)}$. So $v_2(ind~\theta)\geq 9$. Claim is that $v_2(ind~\theta)=9$. To prove the claim we first show that $2$ is fully ramified in $K$ i.e., the value group of $V_2$ is $\frac{1}{6}\Z$. For the last assertion,   it is enough to show that \begin{equation}\label{eqq E26}
V_2(\theta^3-4)=V_2(\theta^3+4)=\frac{5}{2}. \end{equation}
 In view of (\ref{lm p7}), we have $V_2(\frac{\theta^6}{16}+\frac{b}{16})\geq \frac{5}{3}$. By hypothesis $\frac{b}{16}\equiv 1~(mod~4)$, so $V_2({\theta^6}+16)\geq \frac{17}{3}$. Therefore using the strong triangle law, we see that  $V_2(\theta^6-16)=5$ and hence  $V_2(\theta^3-4)=V_2(\theta^3+4)=\frac{5}{2}$. This proves (\ref{eqq E26}) and thus $2$ is fully ramified in $K$ with $v_2(d_K)\geq 6$. So we have shown that \begin{equation}
\label{eqqqq E26}
9\leq v_2(ind~\theta) \leq 10.
\end{equation}

Suppose now that the claim  is false. Then by virtue of (\ref{eqqqq E26}), $v_2(ind~\theta)$ is $10$. Keeping in view Proposition 3.A and the fact that $\mathcal B$ $\subseteq~S_{(2)}$, we see that there exist $x,~y,~z,~v,~w$ in $\Z$ for which $\frac{1}{2^4}[x+y\theta+z\theta^2+v\theta^3+w\theta^4+\theta^5]=\xi~(say)$ is in $S_{(2)}$ and hence the set $\mathcal C:=\{1,\theta,\frac{\theta^2}{2},\frac{\theta^3}{2^2},\frac{\theta^4+4
\theta}{2^3},\xi\}$ will be a 2-integral basis of $K$.  Keeping in mind that $Tr_{K/\mathbb{Q}}(\theta^i)=0$ for $1\leq i\leq 4$ and $Tr_{K/\mathbb{Q}}(\theta^5)=-5a$, we see that $Tr_{K/\mathbb{Q}}(\xi)=\frac{6x}{16}-\frac{5a}{16}$ which must be in $\Z_{(2)}$ and hence $8 \mid x$ as $v_2(a)\geq 5$ by hypothesis. Write $x=8x_1$, so that $\xi=\frac{1}{2^4}[{8x_1+y\theta+z\theta^2+v\theta^3+w\theta^4+\theta^5}].$ On expressing the element $2\xi-x_1-(\frac{\theta^5}{2^3})$ of $S_{(2)}$ as a linear combination of members of $\mathcal C$, we have $$2\xi-x_1-\Big(\frac{\theta^5}{2^3}\Big)=\Big(\frac{y-4w}{8}\Big)\theta+\Big(\frac{z}{4}\Big)\frac{\theta^2}{2}+\Big(\frac{v}{2}\Big)\frac{\theta^3}{2^2}+w\Big(\frac{\theta^4+4\theta}{2^3}\Big).$$ Hence 8 divides $y-4w,~4$ divides $z$ and $v$ is even. Write $y=8l+4w,~z=4z_1,~v=2v_1$ with $l,z_1,v\in\Z$. Thus\begin{equation}\label{eqq E19 1}
\xi=\frac{1}{2^4}[{8x_1+(8l+4w)\theta+4z_1\theta^2+2v_1\theta^3+w\theta^4+\theta^5}].
\end{equation}
The proof is  split into four subcases.

Consider first the subcase when $v_1,~w$ are both even say $v_1=2v_1',~w=2w_1 $. Substituting for $v_1,~w$, we can write (\ref{eqq E19 1}) as $$\xi-v_1'\Big(\frac{\theta^3}{2^2}\Big)-w_1\Big(\frac{\theta^4+4\theta}{2^3}\Big)=\frac{1}{16}[8x_1+8l\theta+4z_1\theta^2+\theta^5].$$ So $S_{(2)}$ contains an element of the type $\frac{1}{16}[8x_1+8l\theta+4z_1\theta^2+\theta^5]$. Without loss of generality, we may assume that $x_1,~l$ belong to $ \{0,1\}$ and $z_1\in\{0,1,-1,2\}$.  Observe that $x_1$ cannot be $1$ because $V_2(8+8l\theta+4z_1\theta^2+\theta^5)=3$ in view of the strong triangle law as $8$ is the only term having minimum valuation. So $x_1$ has to be zero. Therefore $\frac{1}{16}[8l\theta+4z_1\theta^2+\theta^5]\in$ $S_{(2)}$. This is not possible because by virtue of (\ref{eqq E26}), we have  $V_2(\theta^3-4)=V_2(\theta^3+4)=\frac{5}{2}$ and hence $V_2(8l\theta+4z_1\theta^2+\theta^5)$ equals $\frac{10}{3}$ or $\frac{11}{3}$ or $\frac{23}{6}$ according as the pair  $(z_1,~ l)$ belongs to $\{0,2\}\times \{0,1\}$ or to $\{1,-1\}\times\{1\}$ or to  $\{1,-1\}\times \{0\}$. This contradiction proves the claim in the present subcase.

Now consider the subcase when $v_1$ is even  and $w$ is odd, say $v_1=2v_1',~w=2k+1$, then  (\ref{eqq E19 1}) can be rewritten as $$\xi-v_1'\big(\frac{\theta^3}{2^2}\big)-k\big(\frac{\theta^4+4\theta}{2^3}\big)=\frac{1}{16}[8x_1+(8l+4)\theta+4z_1\theta^2+\theta^4+\theta^5]$$ and hence $S_{(2)}$ contains an element of the type $\frac{1}{16}[8x_1+(8l+4)\theta+4z_1\theta^2+\theta^4+\theta^5]$ where $x_1,~l$ belong to $ \{0,1\}$ and $z_1\in\{0,1,-1,2\}$.  Observe that $x_1$ cannot be $1$ because $V_2(8+(8l+4)\theta+4z_1\theta^2+\theta^4+\theta^5)=3$ in view of (\ref{eqq E26}) and the strong triangle law as $8$ is the only term having minimum valuation. So $x_1$ has to be zero. Therefore $\frac{1}{16}[(8l+4)\theta+4z_1\theta^2+\theta^4+\theta^5]\in$ $S_{(2)}$. This is not possible because $V_2((8l+4)\theta+4z_1\theta^2+\theta^4+\theta^5)=V_2(4\theta+\theta^4)=\frac{19}{6}$ keeping in view (\ref{eqq E26}) and the strong triangle law. This contradiction proves the claim in the present subcase.

Next consider the subcase when $v_1$ is odd  and $w$ is even, say $v_1=2v_1''+1,~w=2w_1$, then  (\ref{eqq E19 1}) can be rewritten as $$\xi-v_1''\big(\frac{\theta^3}{2^2}\big)-w_1\big(\frac{\theta^4+4\theta}{2^3}\big)=\frac{1}{16}[8x_1+8l\theta+4z_1\theta^2+2\theta^3+\theta^5]$$ and hence $S_{(2)}$ contains an element of the type $\frac{1}{16}[8x_1+8l\theta+4z_1\theta^2+2\theta^3+\theta^5]$ where $x_1,~l$ belong to $ \{0,1\}$ and $z_1\in\{0, 1,-1,2\}$.  Observe that $x_1$ cannot be $0$ because $V_2(8l\theta+4z_1\theta^2+2\theta^3+\theta^5)=3$ in view of  the strong triangle law as $2\theta^3$ is the only term with minimum valuation. So $x_1$ has to be $1$. Therefore $\frac{1}{16}[8+8l\theta+4z_1\theta^2+2\theta^3+\theta^5]\in S_{(2)}.$  This is impossible because using (\ref{eqq E26}), it can be seen that $V_2(8+2\theta^3+8l\theta+4z_1\theta^2+\theta^5)$ equals $\frac{10}{3}$ or $\frac{21}{6}$ according as $z_1\in\{0,2\}$ or $z_1\in\{1,-1\}$. This contradiction proves the claim in the present subcase.

Finally consider the  subcase when $v_1,~w$ are both odd, say $v_1=2v_1''+1,~w=2k+1$, then  (\ref{eqq E19 1}) can be rewritten as $$\xi-v_1''\big(\frac{\theta^3}{2^2}\big)-k\big(\frac{\theta^4+4\theta}{2^3}\big)=\frac{1}{16}[8x_1+(8l+4)\theta+4z_1\theta^2+2\theta^3+\theta^4+\theta^5]$$ and hence $S_{(2)}$ contains an element of the type $\frac{1}{16}[8x_1+(8l+4)\theta+4z_1\theta^2+2\theta^3+\theta^4+\theta^5]$ where $x_1,~l$ belong to $ \{0,1\}$ and $z_1\in\{0,1,-1,2\}$. Using  (\ref{eqq E26}), it can be seen that $V_2(8x_1+2\theta^3+4z_1\theta^2+(8l+4)\theta+\theta^4+\theta^5)$ equals $3$ or $\frac{19}{6}$ according as $x_1$ is $0$ or $x_1$ is $1$. This contradiction proves the desired result in the present subcase and hence the claim is proved. This completes the proof of Table I.

We now take up Table II.\\
{ \bf Case F1 :} $v_3(a)
=0$. By virtue of (\ref{eq:p1.1}), we have $v_3(D)=0$ and hence  $\{1,\theta,\theta^2,\theta^3,\theta^4,\theta^5\}$ is a 3-integral basis of $K$.\\
{ \bf Case F2 :} $v_3(a)=v_3(b)=1$. In view of (\ref{eq:p1.1}), we have $v_3(D)=6$. The $x$-Newton polygon of $f(x)$ has a single edge joining the points (0, 0) and (6, 1). Arguing exactly as in case E2, we see that $v_3(ind~\theta)=0$ and $\{1,\theta,\theta^2,\theta^3,\theta^4,\theta^5\}$ is a 3-integral basis of $K$.\\
{ \bf Case F3 :} $v_3(a)\geq 2, ~v_3(b)=1$. It follows from (\ref{eq:p1.1}) that $v_3(D)=11$.  The $x$-Newton polygon of $f(x)$ has a single edge joining the points (0, 0) and (6, 1). Proceeding as in case E2 we see that, $v_3(ind~\theta)=0$ and  $\{1,\theta,\theta^2,\theta^3,\theta^4,\theta^5\}$ is a 3-integral basis of $K$.\\
{ \bf Case F4 :} $v_3(a)=1, ~v_3(b)\geq 2$. By virtue of (\ref{eq:p1.1}), we have $v_3(D)=6.$. The $x$-Newton polygon of $f(x)$ has two edges . The first edge is the line segment joining the point (0, 0) with (5, 1) and the second edge is the line segment joining the point (5, 1) with (6, $v_3(b))$. Arguing as in proof of E$4$, we see that  $v_3(ind~\theta)=1$ and  $\{1, \theta,\theta^2,\theta^3,\theta^4,{\theta^5}/{3}\}$ is a 3-integral basis of $K$.\\
{ \bf Case F5 :} $v_3(a)=v_3(b)=2$. It follows from (\ref{eq:p1.1}) that $v_3(D)=12$.  The $x$-Newton polygon of $f(x)$ has a single edge, say $S$ joining the points (0, 0) and (6, 2) with slope $\frac{1}{3}$. The polynomial belonging to $\F_3[Y]$ associated to $f(x)$ with respect to ($x,S)$ is  $Y^2+\overline{\big(\frac{b}{3^2}\big)}$ which has no repeated roots.  Using Theorem 3.C and Lemma 3.F, we conclude that $v_3(ind~\theta)=3$. Applying Proposition 3.E taking $\phi(x)=x$ and $j=1,~2,~3$, it can be seen that $\frac{\theta^5}{3},~\frac{\theta^4}{3},~\frac{\theta^3}{3}$ are integral over $\Z_{(3)}$. Since $v_3(ind~\theta)=3$, it follows that $\{1, \theta,\theta^2,\frac{\theta^3}{3},\frac{\theta^4}{3},\frac{\theta^5}{3}\}$ is a 3-integral basis of $K$ by virtue of Proposition 3.A.\\
{ \bf Case F6 :} $v_3(a)\geq 3, ~v_3(b)=2$. It is immediate from (\ref{eq:p1.1}) that $v_3(D)=16$.  The $x$-Newton polygon of $f(x)$ has a single edge, say $S$ joining the points (0, 0) and (6, 2) with slope $\frac{1}{3}$. The polynomial belonging to $\F_3[Y]$ associated to $f(x)$ with respect to ($x,S)$ is  $Y^2+\overline{\big(\frac{b}{3^2}\big)}$. Arguing as in case F5, it can be easily shown that $v_3(ind~\theta)=3$ and $\{1, \theta,\theta^2,\frac{\theta^3}{3},\frac{\theta^4}{3},\frac{\theta^5}{3}\}$ is a 3-integral basis of $K$.\\
{ \bf Case F7 :} $v_3(a)=2, ~v_3(b)\geq 3.$ By virtue of (\ref{eq:p1.1}), we have  $v_3(D)=12$.  The $x$-Newton polygon of $f(x)$ has two edges . The first edge is the line segment joining the point (0, 0) with (5, 2) and the second edge is the line segment joining the point (5, 2) with (6, $v_3(b))$.  Proceeding as in the proof of case E5, we see that $v_3(ind~\theta)=4$ and $\{1, \theta,\theta^2,\frac{\theta^3}{3},\frac{\theta^4}{3},\frac{\theta^5}{3^2}\}$ is a 3-integral basis of $K$.\\
{ \bf Case F8 :} $v_3(a)=3, ~v_3(b)\geq 4.$ It follows from (\ref{eq:p1.1}) that $v_3(D)=18$.  The $x$-Newton polygon of $f(x)$ has two edges. The first edge is the line segment joining the point (0, 0) with (5, 3) and the second edge is the line segment joining the point (5, 3) with (6, $v_3(b))$.   Arguing as in case E8, we see that $v_3(ind~\theta)=7$ and $\{1, \theta,\frac{\theta^2}{3},\frac{\theta^3}{3},\frac{\theta^4}{3^2},\frac{\theta^5}{3^3}\}$ is a 3-integral basis of $K$. \\
{ \bf Case F9 :} $v_3(a)=v_3(b)=4$. In this case, we have $v_3(D)=24$ .  The $x$-Newton polygon of $f(x)$ has a single edge, say $S$ joining the points (0, 0) and (6, 4). The polynomial associated  to $f(x)$ with respect to ($x,S)$ is  $Y^2+\overline{\big(\frac{b}{3^4}\big)}$. Arguing exactly as in case F5,  we see that $v_3(ind~\theta)=8$ and $\{1, \theta,\frac{\theta^2}{3},\frac{\theta^3}{3^2},\frac{\theta^4}{3^2},\frac{\theta^5}{3^3}\}$ is a 3-integral basis of $K$.\\
{ \bf Case F10 :} $v_3(a)\geq 5, ~v_3(b)=4$. It follows from (\ref{eq:p1.1}) that $v_3(D)=26$.  The $x$-Newton polygon of $f(x)$ has a single edge, say $S$ joining the points (0, 0) and (6, 4). The polynomial associated  to $f(x)$ with respect to ($x,S)$ is  $Y^2+\overline{\big(\frac{b}{3^4}\big)}$. Arguing as in case F5, it can be easily shown that $v_3(ind~\theta)=8$ and  $\{1, \theta,\frac{\theta^2}{3},\frac{\theta^3}{3^2},\frac{\theta^4}{3^2},\frac{\theta^5}{3^3}\}$ is a 3-integral basis of $K$.\\
{ \bf Case F11 :} $v_3(a)=4, ~v_3(b)\geq 5.$ By virtue of (\ref{eq:p1.1}), we see that $v_3(D)=24$.  The $x$-Newton polygon of $f(x)$ has two edges . The first edge  is the line segment joining the point (0, 0) with (5, 4) and the second edge is the line segment joining the point (5, 4) with (6, $v_3(b))$. Proceeding as in case E8, we see that $v_3(ind~\theta)=10$ and $\{1, \theta,\frac{\theta^2}{3},\frac{\theta^3}{3^2},\frac{\theta^4}{3^3},\frac{\theta^5}{3^4}\}$ is a 3-integral basis of $K$. \\
{ \bf Case F12 :} $v_3(a)=v_3(b)=5$. In view of (\ref{eq:p1.1}), we have $v_3(D)=30$.  The $x$-Newton polygon of $f(x)$ has a single edge joining (0, 0) with (6, 5).  Arguing as in the proof of case E10, we see that $v_3(ind~\theta)=10$ and $\{1, \theta,\frac{\theta^2}{3},\frac{\theta^3}{3^2},\frac{\theta^4}{3^3},\frac{\theta^5}{3^4}\}$ is a 3-integral basis of $K$. \\
{ \bf Case F13 :} $v_3(a)\geq 6, ~v_3(b)=5.$ Keeping in mind (\ref{eq:p1.1}), we see that $v_3(D)=31$.  The $x$-Newton polygon of $f(x)$ has a single edge joining the points (0, 0) and (6, 5). Proceeding in the proof of case E10, we see that $v_3(ind~\theta)=10$ and $\{1, \theta,\frac{\theta^2}{3},\frac{\theta^3}{3^2},\frac{\theta^4}{3^3},\frac{\theta^5}{3^4}\}$ is a 3-integral basis of $K$. \\
{ \bf Case F14 :} $v_3(a)=1,~b\equiv 1~(mod~3)$. By virtue of (\ref{eq:p1.1}) and the hypothesis, we have $v_3(D)=6$. In this case $f(x)\equiv (x^2+1)^3~(mod~3)$. Set $\phi_1(x)=x^2+1$. The $(x^2+1)$-expansion of $f(x)$ can be written as
\begin{equation}\label{eqq p1.26}
f(x)=(x^2+1)^3-3(x^2+1)^2+3(x^2+1)+ax+b-1.
\end{equation}
If $v_3^x$ denotes the Gaussian prolongation of $v_3$ to $\Q(x)$ defined by (\ref{Gau}), then $v_3^x(ax+b-1)=min\{v_3(a),v_3(b-1)\}=1$.
So the $\phi_1$-Newton polygon of $f(x)$ has a  single edge  of positive slope,  say $S$ joining the point (0, 0) with (3, 1) and the polynomial associated to  $f(x)$ with respect to $(\phi_1,S)$ is linear. With notations as in Theorem 3.C, it follows that $v_3(ind~\theta)= i_{\phi_{1}}(f)=0$. Hence $\{1,\theta,\theta^2,\theta^3,\theta^4,\theta^5\}$ is a 3-integral basis of $K$.\\
{ \bf Case F15 :} $v_3(a)\geq 2,~b\equiv 4$ or $7~(mod~9)$. By virtue of (\ref{eq:p1.1}), we have $v_3(D)=6$.  Set $\phi_1(x)=x^2+1$. Note that in this case $v_3^x(ax+b-1)=min\{v_3(a),v_3(b-1)\}=1$.  So the $\phi_1$-Newton polygon of $f$ has a  single edge  of positive slope joining the point (0, 0) with (3, 1). Arguing as in case F14, we see that $v_3(ind~\theta)= 0$ and $\{1,\theta,\theta^2,\theta^3,\theta^4,\theta^5\}$ is a 3-integral basis of $K$.\\
{ \bf Case F16 :} $v_3(a)\geq 2,~b\equiv 1~(mod~9)$. By virtue of (\ref{eq:p1.1}), $v_3(D)=6$ in this case.  Set $\phi_1(x)=x^2+1$. In view of the hypothesis,  $v_3^x(ax+b-1)=min\{v_3(a),v_3(b-1)\}\geq 2.$ Using (\ref{eqq p1.26}), it can be seen that the $\phi_1$-Newton polygon of $f$ has two  edges. The first edge, say $S_1$  is the line segment joining the point (0, 0) with (2, 1) and the second edge, say $S_2$ is the line segment joining the point (2, 1) with (3, $min\{v_3(a),v_3(b-1)\} )$. The polynomial associated to $f(x)$ with respect to $(x,S_i)$ is linear for $i=1,~2$. By virtue of Theorem 3.C, we have  $v_3(ind~\theta)= i_{\phi_{1}}(f)=2$. Applying Proposition 3.E with $\phi(x)$ replaced by $\phi_1(x)$ and $j$ by $1$, we see that $\frac{\theta^4-\theta^2+1}{3}$ is 3-integral. Hence the set $\{1,\theta,\theta^2,\theta^3,\frac{\theta^4-\theta^2+1}{3},\frac{\theta^5-\theta^3+\theta}{3}\}$ is contained in $S_{(3)}$. Since $v_3(ind~\theta)=2$, it follows from Proposition 3.A that $\{1,\theta,\theta^2,\theta^3,\frac{\theta^4-\theta^2+1}{3},\frac{\theta^5-\theta^3+\theta}{3}\}$ is a 3-integral basis of $K $.\\
{ \bf Case F17 :} $v_3(a)\geq 2,$  $b\equiv 2$ or $5~(mod~9)$. We have $v_3(D)=6$. In this case $f(x)\equiv (x-1)^3 (x+1)^3~(mod~3)$. Set $\phi_1(x)=x-1$ and $\phi_2(x)=x+1$. The $\phi_1$-expansion of $f(x)$ is given by
\begin{equation}\label{eqq F17}
f(x)=(x-1)^6+6(x-1)^5+15(x-1)^4+20(x-1)^3+15(x-1)^2+(a+6)(x-1)+(a+b+1).
\end{equation}
Using the hypothesis and the above expansion, it can be easily seen that the $\phi_1$-Newton polygon of $f$ has a  single edge  of positive slope,  say $S$ joining the points (3, 0) and (6, 1). Note that the polynomial associated to  $f(x)$ with respect to $(\phi_1,S)$ is linear and $i_{\phi_{1}}(f)=0$.
Keeping in mind the $\phi_2$-expansion of $f(x)$ given by (\ref{eq F17 2}) and using conditions on $a,~b$, one can show that the $\phi_2$-Newton polygon of $f(x)$ has a single edge of positive slope, say $S_1$   joining the point (3, 0) with (6, 1) and $i_{\phi_2}(f)=0$. Note that the  polynomial associated to $f(x)$ with respect to $(\phi_2,S_1)$ is also linear. Therefore $f(x)$ is 3-regular with respect to $\phi_1,~\phi_2$. Consequently it follows from  Theorem 3.C that $v_3(ind~\theta)= i_{\phi_{1}}(f)+i_{\phi_{2}}(f)=0$. Hence $\{1,\theta,\theta^2,\theta^3,\theta^4,\theta^5\}$ is a 3-integral basis of $K$.\\
{ \bf Case F18 :} $v_3(a)\geq 2,~b\equiv -1~(mod~9)$. By virtue of (\ref{eq:p1.1}), we have $v_3(D)=6$. In this case, $f(x)\equiv (x-1)^3 (x+1)^3~(mod~3)$. Set $\phi_1(x)=x-1$ and $\phi_2(x)=x+1$. By hypothesis $v_3(a)\geq 2,~b\equiv -1~(mod~9)$; so $a-6\equiv 6~(mod~9)$ and $a+b+1\equiv 0~(mod~9)$. Therefore using (\ref{eqq F17}), it can be easily seen that the $\phi_1$-Newton polygon of $f(x)$ has two  edges  of positive slope. The first edge, say $S_1$  is the line segment joining the point (3, 0) with (5, 1) and the second edge, say $S_2$ is the line segment joining the point (5, 1) with (6, $v_3(a+b+1)$). The polynomial associated to $f(x)$ with respect to $(x,S_i)$ is linear for $i=1,~2$ and $ i_{\phi_{1}}(f)=1$. 

 Using the $\phi_2$-expansion of $f$ given by (\ref{eq F17 2}) and arguing as above, one can verify that the $\phi_2$-Newton polygon of $f(x)$ has two  edges  of positive slope. The first edge, say $S_1'$  is the line segment joining the point (3, 0) with (5, 1) and the second edge, say $S_2'$ is the line segment joining the point (5, 1) with (6, $v_3(-a+b+1)$). The polynomial associated to $f(x)$ with respect to $(x,S_i')$ is linear for $i=1,~2$.  Therefore $f(x)$ is 3-regular with respect to $\phi_1,~\phi_2$. It now follows from Theorem 3.C that  $v_3(ind~\theta)= i_{\phi_{1}}(f)+i_{\phi_{2}}(f)=2$.  Applying Proposition 3.E with $\phi(x)$ replaced by $\phi_1(x)$ and $j$ by $1$, we see that $\frac{\theta^5+\theta^4+\theta^3+\theta^2+\theta+1}{3}=\gamma_1$ (say) belongs to $S_{(3)}$.  Again applying this proposition taking $\phi(x)=x+1$ and $j=1$, we obtain an element   $\frac{\theta^5-\theta^4+\theta^3-\theta^2+\theta-1}{3}=\gamma_2~ (say)$ in  $S_{(3)}$. So $\gamma_2-\gamma_1+\theta^4+\theta^2+1
 =\frac{\theta^4+\theta^2+1}{3}$ also belongs to $S_{(3)}$. Hence the set $\mathcal B:=  \{1,\theta,\theta^2,\theta^3,\frac{\theta^4+\theta^2+1}{3},\frac{\theta^5+\theta^3+\theta}{3}\}$ is contained in $S_{(3)}$. Since $v_3(ind~\theta)=2$, it follows from Proposition 3.A that $\mathcal{B}$ is a 3-integral basis of $K$. \\
  { \bf Case F19 :} $a\equiv \pm 3~(mod~9),~b\equiv 2~(mod~9)$. In this case $v_3(D)=7$ and $f(x)\equiv (x-1)^3 (x+1)^3~(mod~3)$. Consider first the subcase when $a\equiv 3~(mod~9)$. Set $\phi_1(x)=x-1$ and $\phi_2(x)=x+1$. One can check that the $\phi_1$-Newton polygon of $f$ has a  single edge  of positive slope,  say $S$ joining the point (3, 0) with (6, 1) and the polynomial associated to  $f$ with respect to $(\phi_1,S)$ is linear.  Similarly using  the $\phi_2$-expansion of $f$ given by  (\ref{eq F17 2}), it can be easily seen that the $\phi_2$-Newton polygon of $f$ has two  edges  of positive slope. The first edge, say $S_1$  is the line segment joining the point (3, 0) with (5, 1) and the second edge, say $S_2$ is the line segment joining the point (5, 1) with (6, $v_3(-a+b+1)$). The polynomial associated to $f$ with respect to $(x,S_i)$ is linear for $i=1,~2$.  Thus $f(x)$ is 3-regular with respect to $\phi_1,~\phi_2$. So by Theorem 3.C,  $v_3(ind~\theta)= i_{\phi_{1}}(f)+i_{\phi_{2}}(f)=1$.  Applying Proposition 3.E with $\phi(x)=x+1$ and $j=1$, we see that $\frac{\theta^5-\theta^4+\theta^3-\theta^2+\theta-1}{3}$ belongs to $S_{(3)}$. Since $v_3(ind~\theta)= 1$,  it now follows from Proposition 3.A that $\{1,\theta,\theta^2,\theta^3,\theta^4,\frac{\theta^5-\theta^4+\theta^3-\theta^2+\theta-1}{3}\}$ is a 3-integral basis of $K$. Now consider the subcase when $a\equiv -3~(mod~9)$. Since $-\theta$ is a root of $x^6-ax+b$, it is immediate from the above subcase that $v_3(ind~\theta)= 1$ and  $\{1,\theta,\theta^2,\theta^3,\theta^4,\frac{\theta^5+\theta^4+\theta^3+\theta^2+\theta+1}{3}\}$ is a 3-integral basis of $K$.\\
  { \bf Case F20 :} $a\equiv \pm 3~(mod~9),~b\equiv -1~(mod~9)$. In view of (\ref{eq:p1.1}), $v_3(D)=7$. Here $f(x)\equiv (x-1)^3 (x+1)^3~(mod~3)$. Set $\phi_1(x)=x-1$ and $\phi_2(x)=x+1$. Consider first the subcase when $a\equiv -3~(mod~9)$. One can verify that the $\phi_1$-Newton polygon of $f$ has a  single edge  of positive slope,  say $S_1$ joining the point (3, 0) with (6, 1) and the polynomial associated to  $f$ with respect to $(\phi_1,S_1)$ is linear. It can also be shown that the $\phi_2$-Newton polygon of $f$ has a single edge of positive slope, say $S_2$   joining the point (3, 0) with (6, 1) and  the  polynomial associated to $f$ with respect to $(\phi_2,S_2)$ is also linear. Therefore $f$ is 3-regular with respect to $\phi_1,~\phi_2$. Consequently it follows from  Theorem 3.C that $v_3(ind~\theta)= i_{\phi_{1}}(f)+i_{\phi_{2}}(f)=0$. Hence $\{1,\theta,\theta^2,\theta^3,\theta^4,\theta^5\}$ is a 3-integral basis of $K$. Now consider the subcase when $a\equiv 3~(mod~9)$. Changing $\theta$ to $-\theta$, we infer from the above subcase that $v_3(ind~\theta)=0$ and $\{1,\theta,\theta^2,\theta^3,\theta^4,\theta^5\}$ is a 3-integral basis of $K$.\\
  { \bf Case F21 :} $a\equiv \pm 3 ~(mod~9),~b\equiv 5~(mod~9),~v_3(D)=$ 8.  Here $f(x)\equiv (x-1)^3 (x+1)^3~(mod~3)$. Consider first the subcase when $a\equiv -3~(mod~9)$. Set $\phi_1(x)=x-1$. It can be shown that the $\phi_1$-Newton polygon of $f(x)$ has a single edge of positive slope, say $S$ joining the point (3, 0) with (6, 1) and the polynomial associated to $f(x)$ with respect to $(\phi_1,S)$ is linear with $
 i_{\phi_{1}}(f)=0.$
The hypothesis of this subcase implies that $v_2(a-6)\geq 2,~v_2(-a+b+1)\geq 2$. So the number of edges of the $(x+1)$-Newton polygon depends upon the value of $v_3(a-6)$ and $v_3(-a+b+1)$. To get rid of this uncertain situation, consider $\phi_2(x)=x-\beta$ where $\beta=\frac{-6b}{5a}\equiv -1 ~(mod~3)$ in view of the hypothesis. Denote $v_3(f(\beta)),~v_3(f'(\beta))$ by $s_0,~s_1$ respectively. On substituting for $\beta$ and keeping in mind (\ref{eq:p1.1}), we see that $f(\beta)=\beta^6+a\beta+b=\frac{-bD}{5^6a^6}$. Therefore 
 \begin{equation}\label{eq F23 B}
 s_0=v_3(D)-6.
 \end{equation}
 Similarly in view of the fact that $f'(\beta)=6\beta^5+a=\frac{D}{5^5a^5}$, we have
 \begin{equation}\label{eqq}
 s_1=s_0+1=v_3(D)-5.
 \end{equation}
 Since $v_3(D)=8$ by hypothesis, it follows from (\ref{eq F23 B}) that  $s_0=2$. Therefore using the $(x-\beta)$-expansion of $f(x)$ given by (\ref{beta}), it can be easily seen that the $\phi_2$-Newton polygon of $f(x)$ has a single edge of positive slope, say $S_2$ joining the point (3, 0) with (6, 2) and the  polynomial associated to $f$ with respect to ($\phi_2,S_2)$ is linear; consequently $f(x)$ is 3-regular with respect to $\phi_1,~\phi_2$ having $ i_{\phi_{2}}(f)=1$. It now follows from Theorem 3.C that $v_3(ind~\theta)= i_{\phi_{1}}(f)+i_{\phi_{2}}(f)=1$. Applying Proposition 3.E with $\phi(x)$ replaced by $x-\beta$ and $j$ by $1$, we see that the element  $\frac{\theta^5+\beta\theta^4+\beta^2\theta^3+\beta^3\theta^2+\beta^4\theta+\beta^5+a}{3}$ belongs to $S_{(3)}$. Keeping in mind that $\beta\equiv -1~(mod~3)$ and $a\equiv 0~(mod~3)$, we conclude the element $\frac{\theta^5-\theta^4+\theta^3-\theta^2+\theta-1}{3}=\eta~(say)$ is  in $S_{(3)}$. Since $v_3(ind~\theta)=1$, it follows  that $\{1,\theta,\theta^2,\theta^3,\theta^4,\eta\}$ is 3-integral basis of $K$ in view of Proposition 3.A. 

In the present case consider now  the situation when $a\equiv 3~(mod~9)$. Since $-\theta$ is a root of $x^6-ax+b$, it follows from the above subcase that $v_3(ind~\theta)=1$ and $\{1,\theta,\theta^2,\theta^3,\theta^4,\eta'\}$ is a 3-integral basis of $K$ where $\eta'=\frac{\theta^5+\theta^4+\theta^3+\theta^2+\theta+1}{3}$.\\
  { \bf Case F22 :} $a\equiv \pm 3~(mod~9),~b\equiv 5~(mod~9),~v_3(D)=$  9.  Here $f(x)\equiv (x-1)^3 (x+1)^3~(mod~3)$.  Consider first the subcase when $a\equiv -3~(mod~9)$. Set $\beta=\frac{-6b}{5a}$, $\phi_1(x)=x-1$ and $\phi_2(x)=x-\beta$,  $s_0=v_3(f(\beta)),~s_1=v_3(f'(\beta))$. Note that $\beta\equiv -1 ~(mod~3)$. The $\phi_1$-Newton polygon of $f$ has a single edge of positive slope, say $S_1$ joining the points (3, 0) and (6, 1) with  $i_{\phi_1}(f)=0$. The polynomial associated to $f$ with respect to $(\phi_1,S_1)$ is linear. Arguing exactly as for the proof of (\ref{eq F23 B}) and (\ref{eqq}), one can check that  $s_1=s_0+1=v_3(D)-5=4$ in view of the hypothesis on $v_3(D)$. Therefore using the $(x-\beta)$-expansion of $f(x)$ given by (\ref{beta}), it can be easily seen that the the $\phi_2$-Newton polygon  $f(x)$ has a single edge of positive slope, say $S_2$ joining the points (3, 0) and (6, 3) with the lattice point (4, 1) lying on it. The  polynomial $T[Y]$ (say) associated to $f(x)$ with respect to ($\phi_2,S_2)$ over the field $\F_3$ is of the type $Y^3+\overline{c_1}Y^2+\overline{c_2}$ with $\overline{c_1}$$~\overline{c_2}\neq \overline{0}$ and hence $T[Y]$ no repeated roots. Consequently $f$ is 3-regular with respect to $\phi_1,~\phi_2$.   It now follows from Theorem 3.C that $v_3(ind~\theta)= i_{\phi_{1}}(f)+i_{\phi_{2}}(f)=3$. Applying Proposition 3.E with  $\phi(x)$ replaced by $x-\beta$ and $j$ by $1,~2$, we see that the elements 
  $\frac{\theta^5+\beta\theta^4+\beta^2\theta^3+\beta^3\theta^2+\beta^4\theta+\beta^5+a}{9},~\frac{\theta^4+2\beta\theta^3+3\beta^2\theta^2+4\beta^3\theta+5\beta^4}{3}$ are in $S_{(3)}$. Since $v_3(6\beta^5+a)=s_1=4$, we conclude that  the elements $\eta_5=\frac{\theta^5+\beta\theta^4+\beta^2\theta^3+\beta^3\theta^2+\beta^4\theta-5\beta^5}{9},~\eta_4=\frac{\theta^4-\beta\theta^3+\beta\theta-1}{3}$ are in $S_{(3)}$. Now choose an integer $x_1$ such that 
   $5(\frac{a}{3})x_1+2b\equiv 0~(mod~9)$. Note that $9$ divides $\beta-x_1$  in $\Z_{(3)}$ and $\beta\equiv -1~(mod~3)$. Therefore the elements $\eta_4',~\eta_5'$ defined by
   \begin{equation}\label{43} \eta_4'=\frac{\theta^4+\theta^3-\theta-1}{3},~\eta_5'=\frac{\theta^5+x_1\theta^4+x_1^2\theta^3+x_1^3\theta^2+x_1^4\theta-5x_1^5}{9}
   \end{equation} are in $S_{(3)}$ because $\eta_4-\eta_4',~\eta_5-\eta_5'$  belong to $\Z_{(3)}[\theta]\subseteq S_{(3)}$. Since  $v_3(ind~\theta)=3$, it follows that  $\{1,\theta,\theta^2,\theta^3,\eta_4',\eta_5'\}$ is a 3-integral basis of $K$ in view of Proposition 3.A.  
   
   In the present case consider now the subcase when $a\equiv 3~(mod~9)$. Since $-\theta$ is a root of $x^6-ax+b$, on replacing $\theta$ by $-\theta$ and $\beta$ by $-\beta$ in $\eta_4,~\eta_5$, we infer from the above subcase that $v_3(ind~\theta)=3$ and $\{1,\theta,\theta^2,\theta^3,\eta_4',\eta_5'\}$ is a 3-integral basis of $K$ where $\eta_4',\eta_5'$ are given by (\ref{43}) .\\
    { \bf Case F23 :} $a\equiv \pm 3~(mod~9),~b\equiv 5~(mod~9),~v_3(D)\geq 10$ and even.
     In this case $f(x)\equiv (x-1)^3 (x+1)^3~(mod~3)$. Consider first the subcase when $a\equiv -3 ~(mod~9)$. Set $\phi_1(x)=x-1$ and $\phi_2(x)=x-\beta$ with $\beta=\frac{-6b}{5a}$, $s_0=v_3(f(\beta)),~s_1=v_3(f'(\beta))$. It can be easily seen that the $\phi_1$-Newton polygon of $f$ is the same as in case F22 with  $i_{\phi_1}(f)=0$. Arguing exactly as for the proof of (\ref{eq F23 B}) and (\ref{eqq}), one can check that $ s_1=s_0+1=v_3(D)-5.$  So   $s_0$ is even and $s_0\geq 4$ in view of the hypothesis on $v_3(D)$.  Therefore using the $(x-\beta)$-expansion of $f(x)$ given by (\ref{beta}), it can be easily seen that the the $\phi_2$-Newton polygon  $f(x)$ has $2$ edges.  The first edge, say $S_1$  is the line segment joining the point (3, 0) with (4, 1) and the second edge, say $S_2$ is the line segment joining the point (4, 1) with (6, $s_0$). The slopes of these edges are $1$ and $\frac{s_0-1}{2}$ respectively. The polynomial associated to $f$ with respect to $(\phi_2,S_1)$ is clearly linear and the one associated to $f$ with respect to $(\phi_2,S_2)$ is linear as $s_0$ is even.  Therefore $f(x)$ is 3-regular with respect to $\phi_1,~\phi_2$. So using  Lemma 3.F and the equality $s_0=v_3(D)-6$, we have $i_{\phi_2}(f)=\frac{s_0+2}{2}=\frac{v_3(D)-4}{2}$; consequently by Theorem 3.C,  $v_3(ind~\theta)= i_{\phi_{1}}(f)+i_{\phi_{2}}(f)=\frac{v_3(D)-4}{2}$.  Applying Proposition 3.E taking  $\phi(x)=x-\beta$ and $j=1,~2$, we see that the elements  $\frac{\theta^5+\beta\theta^4+\beta^2\theta^3+\beta^3\theta^2+\beta^4\theta+\beta^5+a}{3^{k_0}},~\frac{\theta^4+2\beta\theta^3+3\beta^2\theta^2+4\beta^3\theta+5\beta^4}{3}$ are in $S_{(3)}$,  where $k_0=\frac{v_3(D)-6}{2}$. Since $v_3(6\beta^5+a)=s_1=v_3(D)-5>k_0$, it follows that the elements $\eta_5=\frac{\theta^5+\beta\theta^4+\beta^2\theta^3+\beta^3\theta^2+\beta^4\theta-5\beta^5}{3^{k_0}},~\eta_4=\frac{\theta^4-\beta\theta^3+\beta\theta-1}{3}$ are in $S_{(3)}$.  Choose an integer $x_2$ such that 
        $5(\frac{a}{3})x_2+2b\equiv 0~(mod~3^{k_0})$. Note that $3^{k_0}$ divides $\beta-x_2$  in $\Z_{(3)}$  and $\beta\equiv -1~(mod~3)$. So the elements $\eta_4',~\eta_5'$ defined by 
        \begin{equation}\label{44}
         \eta_4'=\frac{\theta^4+\theta^3-\theta-1}{3},~\eta_5'=\frac{\theta^5+x_2\theta^4+x_2^2\theta^3+x_2^3\theta^2+x_2^4\theta-5x_2^5}{3^{k_0}}
         \end{equation} are in $S_{(3)}$. Since  $v_3(ind~\theta)=\frac{v_3(D)-4}{2}$, it follows that  $\{1,\theta,\theta^2,\theta^3,\eta_4',\eta_5'\}$ is a 3-integral basis of $K$ in view of Proposition 3.A. 
         
         Consider now the subcase when $a\equiv 3~(mod~9)$. Keeping in view that $-\theta$ is a root of $x^6-ax+b$, we infer from the above subcase that $v_3(ind~\theta)=\frac{v_3(D)-4}{2}$ and $\{1,\theta,\theta^2,\theta^3,\eta_4',\eta_5'\}$ is a 3-integral basis of $K$ where  $\eta_4',\eta_5'$ are given by (\ref{44}) .\\
    { \bf Case F24 :} $a\equiv \pm 3~(mod~9),~b\equiv 5~(mod~9),~v_3(D)\geq 11$ and odd.
        Here $f(x)\equiv (x-1)^3 (x+1)^3~(mod~3)$. Consider first the subcase when $a\equiv - 3~(mod~9)$. Set $\phi_1(x)=x-1$ and $\phi_2(x)=x-\beta$ with $\beta=\frac{-6b}{5a}$, $s_0=v_3(f(\beta)),~s_1=v_3(f'(\beta))$. It can be shown that the $\phi_1$-Newton polygon of $f$ is the same as in case F22. Arguing exactly as for the proof of (\ref{eq F23 B}) and (\ref{eqq}), one can check that  $ s_1=s_0+1=v_3(D)-5$. So $s_0$ is odd and $s_0\geq 5$ in view of the hypothesis. Therefore using the $(x-\beta)$-expansion of $f(x)$ given by (\ref{beta}), it can be easily seen that the the $\phi_2$-Newton polygon  $f(x)$ has $2$ edges.  The first edge, say $S_1$  is the line segment joining the point (3, 0) with (4, 1) and the second edge, say $S_2$ is the line segment joining the point (4, 1) with (6, $s_0$). The polynomial associated to $f$ with respect to $(\phi_2,S_1)$ is linear  and the one associated to $f$ with respect to $(\phi_2,S_2)$ is a quadratic polynomial of the type $Y^2+\overline{c}$  because $s_0$ is odd. Therefore $f(x)$ is 3-regular with respect to $\phi_1,~\phi_2$. So keeping in mind  Theorem 3.C , Lemma 3.F and the equality $s_0=v_3(D)-6$, we see that $v_3(ind~\theta)= i_{\phi_{1}}(f)+i_{\phi_{2}}(f)=\frac{v_3(D)-3}{2}$.  Applying Proposition 3.E with  $\phi(x)=x-\beta$ and $j=1,~2$, we see that the elements  $\frac{\theta^5+\beta\theta^4+\beta^2\theta^3+\beta^3\theta^2+\beta^4\theta+\beta^5+a}{3^{k_1}}$,   $\frac{\theta^4+2\beta\theta^3+3\beta^2\theta^2+4\beta^3\theta+5\beta^4}{3}$  are in $S_{(3)}$, where $k_1=\frac{v_3(D)-5}{2}$.  Since $v_3(6\beta^5+a)=s_1>k_1$, it follows that the elements $\eta_5=\frac{\theta^5+\beta\theta^4+\beta^2\theta^3+\beta^3\theta^2+\beta^4\theta-5\beta^5}{3^{k_1}}$  and $\eta_4=\frac{\theta^4-\beta\theta^3+\beta\theta-1}{3}$ are in $S_{(3)}$.  Now choose an integer $x_3$ such that 
           $5(\frac{a}{3})x_3+2b\equiv 0~(mod~3^{k_1})$. Note that $3^{k_1}$ divides $\beta-x_3$  in $\Z_{(3)}$ and $\beta\equiv -1~(mod~3)$. Therefore the elements
           \begin{equation}\label{45}
            \eta_4'=\frac{\theta^4+\theta^3-\theta-1}{3},~\eta_5'=\frac{\theta^5+x_3\theta^4+x_3^2\theta^3+x_3^3\theta^2+x_3^4\theta-5x_3^5}{3^{k_1}}
            \end{equation} are $3$-integral. Since  $v_3(ind~\theta)=\frac{v_3(D)-3}{2}$, it follows that  $\{1,\theta,\theta^2,\theta^3,\eta_4',\eta_5'\}$ is a 3-integral basis of $K$ in view of Proposition 3.A. 
            
             Consider now the subcase when $a\equiv 3~(mod~9)$. Since $-\theta$ is a root of $x^6-ax+b$, we conclude in view of the above subcase that $v_3(ind~\theta)=\frac{v_3(D)-3}{2}$ and $\{1,\theta,\theta^2,\theta^3,\eta_4',\eta_5'\}$ is a 3-integral basis of $K$ where $\eta_4',\eta_5'$ are given by (\ref{45}). This completes the proof in case F24.\\

  In Table II, it remains to deal with the cases when $v_3(b)=3,~v_3(a)\geq 3$. In this situation the $x$-Newton polygon of $f(x)$ is a line segment  joining the points (0, 0) and (6, 3).  In what follows, $\xi$, $B$ will stand respectively for $\frac{\theta^2}{3}$, $\frac{b}{3^3}$ and $V_3$ for any prolongation of $v_3$ to $K$. Note that   $V_3(\theta)=\frac{1}{2}$ in view of \cite[Chapter 2, Proposition 6.3]{Neu} and 
 
 \begin{equation}\label{F31}
 V_3(\xi^3+B)= V_3\big(\frac{\theta^6+b}{3^3}\big)=V_3\big(\frac{-a\theta}{3^3}\big)=v_3(a)-3+\frac{1}{2} .
  \end{equation}

   With the above notations, we  take up the remaining cases of Table II.\\
  { \bf Case F25 :} $v_3(a)=v_3(b)=3.$ It is immediate from (\ref{eq:p1.1}) that $v_3(D)=18$.  Applying Proposition 3.E taking $\phi(x)=x$ and $j=1,~2,~3,~4$, we see that the set $\mathcal B:=\{1,\theta,\frac{\theta^2}{3},\frac{\theta^3}{3},\frac{\theta^4}{3^2},\frac{\theta^5}{3^2}\}$ is contained in $S_{(3)}$. So $v_3(ind~\theta)\geq 6$ which implies that  $v_3(d_K)=v_3(D)-2v_3(ind~\theta)\leq 6$. In this case we will prove that $v_3(d_K)= 6$ and hence $\mathcal B$ will be a 3-integral basis of $K$. In view of the hypothesis, (\ref{F31}) gives $V_3(\xi^3+B)=\frac{1}{2}$. Also by little Fermat's Theorem $V_3(B^3-B)=v_3(B^3-B)\geq 1$. So by the strong triangle law $$V_3(\xi^3+B^3)=min~\{V_3(\xi^3+B),V_3(B^3-B)\}=\frac{1}{2}$$ i.e., $V_3((\xi+B)^3-3\xi B(\xi+B))=\frac{1}{2}$. It now follows in view of the strong triangle law that $V_3((\xi+B)^3)=\frac{1}{2}$ and hence $V_3(\xi+B)=\frac{1}{6}$. So $3 $ is totally ramified in $K$; consequently $v_3(d_K)\geq 6$ in view of a basic result (cf.\cite[Theorem 4.24] {Nar}).\\
  { \bf Case F26 :} $v_3(a)\geq 4,~v_3(b)=3,$ $v_3(B^3-B)=1$.   Using  (\ref{F31}), hypothesis and the strong triangle law, we have $V_3(\xi^3+B^3)=min~\{V_3(\xi^3+B),V_3(B^3-B)\}=1.$  In view of the strong triangle law, the equality $V_3((\xi+B)^3-3\xi B(\xi+B))=1$  shows that $V_3((\xi+B)^3)=1.$ Since the $x$-Newton polygon of $f(x)$ has a single edge with slope $\frac{1}{2}$, we have $V_3(\theta)=\frac{1}{2}$. So the value group of $V_3$ is $\frac{1}{6}\Z$. Consequently $v_3(d_K)\geq 6 $. By virtue of (\ref{eq:p1.1}), we have $v_3(D)=21$. Therefore $v_3(d_K)$ is odd and it exceeds 6. Since for any prolongation $V_3$ of $v_3$ to $K$, we have $V_3((\xi+B)^2)\theta)=\frac{2}{3}+\frac{1}{2}>1$. So $\frac{(\xi+B)^2\theta}{3}=$$\frac{(\theta^2+3B)^2\theta}{3^3}$  belongs to $S_{(3)}$ in view of \cite[Chapter 3, Section 4, Theorem 6]{Bor}. Applying Proposition 3.E with $\phi(x)=x$ and $j=4$, we see that $\frac{\theta^2}{3}\in S_{(3)}$ and consequently $\frac{\theta^3}{3},~\frac{\theta^4}{3^2}\in S_{(3)}.$ Therefore the set $\mathcal B:=\{1,\theta,\frac{\theta^2}{3},\frac{\theta^3}{3},\frac{\theta^4}{3^2},\frac{(\theta^2+3B)^2\theta}{3^3}\}$ is contained in $S_{(3)}$. Hence $v_3(ind~\theta)\geq 7$ and $v_3(d_K)\leq 7$. It now follows that  $v_3(d_K)=7$ and $\mathcal B$ is a 3-integral basis of $K$.\\
  { \bf Case F27 :} $v_3(a)\geq 4,~v_3(b)=3,$ $v_3(B^3-B)\geq 2$. Keeping in mind (\ref{F31}) and the hypothesis, we see that for any prolongation $V_3$ of $v_3$ to $K$,  $V_3(\xi^3+B^3)\geq \frac{3}{2}$ i.e., 
  \begin{equation}\label{F33}
  V_3((\xi+B)^3-3\xi B(\xi+B))\geq \frac{3}{2}.
  \end{equation} With $V_3$ as above, we first show that  $V_3(\xi+B)\geq \frac{1}{2}.$ Note that when  $V_3((\xi+B)^3)=V_3(3\xi B(\xi+B))$, then clearly $V_3(\xi+B)=\frac{1}{2}$ and when $V_3((\xi+B)^3)\neq V_3(3\xi B(\xi+B))$ then in view of (\ref{F33}), $min~\{(\xi+B)^3,1+V_3(\xi+B)\}\geq \frac{3}{2}$ which implies that $V_3(\xi+B)\geq \frac{1}{2}$. As $V_3(\theta)=\frac{1}{2}$  and $V_3(\xi+B)\geq\frac{1}{2}$, it follows that $\frac{(\xi+B)\theta}{3}=\frac{(\theta^2+3B)\theta}{3^2}\in S_{(3)}$ and that $\frac{(\theta^2+3B)^2}{3^3}=\frac{(\xi+B)^2}{3}\in S_{(3)}$; consequently $\frac{(\theta^2+3B)^2\theta}{3^3}$ belongs to $S_{(3)}$. By Proposition 3.E applied to $\phi(x)=x$ and $j=4$, we see that $\frac{\theta^2}{3}$ belongs to $S_{(3)}$. Thus we have shown that the set $\mathcal B:=\{1,\theta,\frac{\theta^2}{3},\frac{(\theta^2+3B)\theta}{3^2},\frac{(\theta^2+3B)^2}{3^3},\frac{(\theta^2+3B)^2\theta}{3^3}\}$ is contained in $S_{(3)}$ which implies $v_3(ind~\theta)\geq 9$ and hence $v_3(d_K)\leq 3$. Keeping in mind that the index of ramification of each prime ideal of $A_K$ lying over $3$ is divisible by $2$ and taking into consideration the factorisation\footnote{In fact the factorisation of $3A_K$ will be either $\wp_0^2$ or $\wp^2\wp_1^2$ or $\wp_1^2\wp_2^2\wp_3^2$ or $\wp_1^2\wp_2^4$ or $\wp_1^6$ where  $N_{K/\mathbb{Q}}(\wp_0)=3^3$, $N_{K/\mathbb {Q}}(\wp_i)=3$ for each $i\geq 1$ and $N_{K/\mathbb{Q}}(\wp)=3^2$.} of $3A_K$ into product of prime ideals of $A_K$, it can be easily verified that  $v_3(d_K)\geq 3$ . Therefore $v_3(d_K)=3,~v_3(ind~\theta)=9$ and $\mathcal B$ is a 3-integral basis of $K$. This completes the proof of Table II.\\
  
    In the proof of Table III, we shall use the $(x+a)$-expansion of $f(x)$ given by 
     \begin{equation}\label{G2 1}
        (x+a)^6-6a(x+a)^5+15a^2(x+a)^4-20a^3(x+a)^3+15a^4(x+a)^2+a(1-6a^4)(x+a)+(b+a^6-a^2)
        \end{equation} and shall denote  $v_5(b+a^6-a^2),~v_5(a(1-6a^4))$ by $r_0,~r_1$ respectively. Note that when  $v_5(b)
            =1,~v_5(a)=0,$ then $r_0=1$ if and only if $b\not\equiv a^2-a^6~(mod~25)$ and $r_1=1$ if and only if $a^4\not\equiv 21~(mod~25)$. With this fact in mind, we take up the various cases of Table III. \\
    { \bf Case G1 :} $v_5(b)
    =0$. In this case, we have $v_5(D)=0$ and hence $\{1,\theta,\theta^2,\theta^3,\theta^4,\theta^5\}$ is a 5-integral basis of $K$.\\
  { \bf Case G2 :} $v_5(b)
          =1,~v_5(a)=0,~r_0=1=r_1$, $a^2\not\equiv \frac{b}{5}~(mod~5)$. Keeping in mind (\ref{eq:p1.1}) and the hypothesis $a^2\not\equiv \frac{b}{5}~(mod~5)$, it can be easily verified  that $v_5(D)=5$. Here $f(x)\equiv x(x+a)^5~(mod~5)$. Set $\phi_1(x)=x$ and $\phi_2(x)=x+a$. The $\phi_1$-Newton polygon of $f$ has a single edge of positive slope joining the points (5, 0) and (6, 1) with $i_{\phi_1}(f)=0$.  Using (\ref{G2 1}), it can be easily seen that the $\phi_2$-Newton polygon of $f$ has a single edge of positive slope, say $S_1$ joining the points (1, 0) and (6, 1). Note that the polynomial associated to $f$ with respect to $(\phi_2,S_1)$ is linear and $i_{\phi_2}(f)=0$. Using Theorem 3.C, we have $v_5(ind~\theta)=i_{\phi_1}(f)+i_{\phi_2}(f)=0$ and hence $\{1,\theta,\theta^2,\theta^3,\theta^4,\theta^5\}$ is a 5-integral basis of $K$.\\
   { \bf Case G3 :} $v_5(b)
           =1,~v_5(a)=0,~r_0=1=r_1$, $a^2\equiv \frac{b}{5}~(mod~5)$. Using (\ref{eq:p1.1}) and  the hypothesis, a simple calculation gives $v_5(D)=6$.   Set $\phi_1(x)=x$ and $\phi_2(x)=x+a$. One can check that the $\phi_1$-Newton polygon and the $\phi_2$-Newton polygon of $f$ are the same as in case G2.  Arguing as in case G2, we see that $v_5(ind~\theta)=0$ and $\{1,\theta,\theta^2,\theta^3,\theta^4,\theta^5\}$ is a 5-integral basis of $K$.\\ 
    { \bf Case G4 :} $v_5(b) =1,~v_5(a)=0,~r_0\geq 2,~r_1=1$. By virtue of (\ref{eq:p1.1}) and  the hypothesis, it can be easily seen that $v_5(D)=5$ in this case.  Set $\phi_1(x)=x$ and $\phi_2(x)=x+a$. The $\phi_1$-Newton polygon of $f$ has a single edge of positive slope joining the points (5, 0) and (6, 1) with $i_{\phi_1}(f)=0$. Using (\ref{G2 1}), it can be easily seen that the $\phi_2$-Newton polygon of $f(x)$ has two  edges  of positive slope. The first edge, say $S_1$  is the line segment joining the point (1, 0) with (5, 1) and the second edge, say $S_2$ is the line segment joining the point (5, 1) with (6, $r_0$) with $i_{\phi_2}(f)=1$. The  polynomial associated to $f(x)$ with respect to $(\phi_2,S_i)$ is linear for $i=1,~2$. In view of Theorem 3.C, we have $v_5(ind~\theta)=i_{\phi_1}(f)+i_{\phi_2}(f)=1$.  Applying Proposition 3.E with $\phi(x)$ replaced by $\phi_2(x)$ and $j$ by $1$, we see that $\frac{\theta^5-a\theta^4+a^2\theta^3-a^3\theta^2+a^4\theta-a^5+a}{5}$ belongs to $S_{(5)}$ and hence $\frac{\theta^5-a\theta^4+a^2\theta^3-a^3\theta^2+\theta}{5}=\eta~(say)$ belongs to $S_{(5)}$. Since $v_5(ind~\theta)= 1$,  it now follows from Proposition 3.A that $\{1,\theta,\theta^2,\theta^3,\theta^4,\eta\}$ is a 5-integral basis of $K$.\\
      { \bf Case G5 :} $v_5(b)
        =1,v_5(a)=0,~r_0=1,~r_1\geq 2$. Keeping in mind the hypothesis and (\ref{eq:p1.1}), a simple calculation shows that $v_5(D)=5$. Here $f(x)\equiv x(x+a)^5~(mod~5)$. Set $\phi_1(x)=x$ and $\phi_2(x)=x+a$. One can check that the $\phi_i$-Newton polygon of $f$ is the same as in case G2 for $i=1,2$. Proceding as in case G2, we see that $v_5(ind~\theta)=0$ and  $\{1,\theta,\theta^2,\theta^3,\theta^4,\theta^5\}$ is a 5-integral basis of $K$.\\
   { \bf Case G6 :} $v_5(b)=1,~v_5(a)=0,~r_0\geq 2,~r_1\geq 2,v_5(D)$ is odd. By virtue of (\ref{eq:p1.1}) and the hypothesis, we have $v_5(D)\geq 7$. Recall that $f(x)\equiv x(x+a)^5~(mod~5)$. Set $\phi_1(x)=x$. The $\phi_1$-Newton polygon of $f$ has a single edge of positive slope joining the points (5, 0) and (6, 1) with $i_{\phi_1}(f)=0$. Set $\beta=\frac{-6b}{5a}$ and $\phi_2(x)=x-\beta$. Keeping in view the hypothesis, one can check that $\beta\equiv -a ~(mod~5)$. Denote $v_5(f(\beta)),~v_5(f'(\beta))$ by $s_0,~s_1$ respectively. On substituting for $\beta$ and keeping in mind (\ref{eq:p1.1}), we see that $f(\beta)=\frac{-bD}{5^6a^6}$. Therefore 
   \begin{equation}\label{eq G6 1}
    s_0=v_5(D)-5.
   \end{equation}
   Similarly it can be seen that  $s_1=v_5(6\beta^5+a)=v_5(\frac{D}{5^5a^5})=v_5(D)-5$. Therefore  \begin{equation}\label{eqq G}
                                       s_1=s_0=v_5(D)-5.
    \end{equation}
      By virtue of (\ref{eq G6 1}) and the hypothesis,  the number $s_0$ is even and positive.  So using the $(x-\beta)$-expansion of $f(x)$ given by (\ref{beta}), it can be easily seen that the $\phi_2$-Newton polygon of $f(x)$ has  two  edges  of positive slope. The first edge, say $S_1$  is the line segment joining the point (1, 0) with (4, 1) and the second edge, say $S_2$ is the line segment joining the point (4, 1) with (6, $s_0$). The slopes of these edges are $\frac{1}{3}$ and $\frac{s_0-1}{2}$ respectively. The polynomial associated to $f(x)$ with respect to $(\phi_2,S_1)$ is clearly linear and the one associated to $f$ with respect to $(\phi_2,S_2)$ is linear because $s_0$ is even. Therefore $f$ is 5-regular with respect to $\phi_1,~\phi_2$. Keeping in mind (\ref{eq G6 1}), it now follows from Lemma 3.F that $i_{\phi_2}(f)=\frac{s_0+2}{2}=\frac{v_5(D)-3}{2}$ and consequently by Theorem 3.C, $v_5(ind~\theta)=i_{\phi_1
      }(f)+i_{\phi_2}(f)=\frac{v_5(D)-3}{2}$. Set $k_1=\frac{v_5(D)-5}{2}$.  Applying Proposition 3.E with $\phi(x)$ replaced  by  $\phi_2(x)$  and $j$ by $1,~2$, we see that the elements $\frac{\theta^5+\beta\theta^4+\beta^2\theta^3+\beta^3\theta^2+\beta^4\theta+\beta^5+a}{5^{k_1}}$  and $\frac{\theta^4+2\beta\theta^3+3\beta^2\theta^2+4\beta^3\theta+5\beta^4}{5}=\eta_0~(say)$ are in $S_{(5)}$. Since $v_5(6\beta^5+a)=s_1=v_5(D)-5>k_1$ in view of (\ref{eqq G}), it follows that the element $\eta_1=\frac{\theta^5+\beta\theta^4+\beta^2\theta^3+\beta^3\theta^2+\beta^4\theta-5\beta^5}{5^{k_1}}$ is in $S_{(5)}$. Choose an integer $x_0$ such that  ${a}x_0+6(\frac{b}{5})\equiv 0~(mod~5^{k_1})$. Keeping in mind that $5^{k_1}$ divides $\beta-x_0$  in $\Z_{(5)}$ and $\beta\equiv -a~(mod~5)$, we see that the elements $\eta_0',~\eta_1'$ defined by 
       $$\eta_0'=\frac{\theta^4-2a\theta^3+3a^2\theta^2-4a^3\theta}{5},~\eta_1'=\frac{\theta^5+x_0\theta^4+x_0^2\theta^3+x_0^3\theta^2+x_0^4\theta-5x_0^5}{5^{k_1}}$$ are in $S_{(5)}$ because $\eta_0-\eta_0',~\eta_1-\eta_1'$  belong to $\Z_{(5)}[\theta]\subseteq S_{(5)}$. Since  $v_5(ind~\theta)=\frac{v_5(D)-3}{2}$,we conclude that  $\{1,\theta,\theta^2,\theta^3,\eta_0',\eta_1'\}$ is a 5-integral basis of $K$ in view of Proposition 3.A.\\
  { \bf Case G7 :} $v_5(b)=1,~v_5(a)=0,~r_0\geq 2,~r_1\geq 2,~v_5(D)$ is even. It follows from (\ref{eq:p1.1}) and the hypothesis that  $v_5(D)\geq 8$.   Set $\phi_1(x)=x$, $\phi_2(x)=x-\beta$ with $\beta=\frac{-6b}{5a}$, $s_0=v_5(f(\beta)),~s_1=v_5(f'(\beta))$. The $\phi_1$-Newton polygon of $f$ has a single edge of positive slope joining the points (5, 0) and (6, 1) with $i_{\phi_1}(f)=0$. Arguing exactly as for the proof of (\ref{eq G6 1}) and (\ref{eqq G}), one can check that $s_0=s_1=v_5(D)-5.$ So $s_0$ is odd  and $s_0\geq 3$ in view of the hypothesis.  Therefore using the $(x-\beta)$-expansion of $f(x)$ given by (\ref{beta}), it can be easily seen that the the $\phi_2$-Newton polygon  $f(x)$ has  two  edges  of positive slope. The first edge, say $S_1$  is the line segment joining the point (1, 0) with (4, 1) and the second edge, say $S_2$ is the line segment joining the point (4, 1) with (6, $s_0$). The slopes of these edges are $\frac{1}{3}$ and $\frac{s_0-1}{2}$ respectively.  The polynomial associated to $f(x)$ with respect to $(\phi_2,S_1)$ is linear and the one associated to $f$ with respect to $(\phi_2, S_2)$ is of the type $Y^2+\overline{c}$ because $s_0$ is odd. So $f$ is 5-regular with respect to $\phi_1,~\phi_2$. Keeping in mind that $s_0=v_5(D)-5$, it now follows from Lemma 3.F that $i_{\phi_2}(f)=\frac{s_0+3}{2}=\frac{v_5(D)-2}{2}$ and consequently by  virtue of Theorem 3.C, $v_5(ind~\theta)= i_{\phi_1}(f)+i_{\phi_2}(f)=\frac{v_5(D)-2}{2}$. Set $k_2=\frac{v_5(D)-4}{2}$. Applying Proposition 3.E taking $\phi(x)=x-\beta$ and  $j=1,~2$, we see that the elements $\frac{\theta^5+\beta\theta^4+\beta^2\theta^3+\beta^3\theta^2+\beta^4\theta+\beta^5+a}{5^{k_2}}$ and $\frac{\theta^4+2\beta\theta^3+3\beta^2\theta^2+4\beta^3\theta+5\beta^4}{5}=\eta_0~(say)$ are in $S_{(5)}$. Recall that $v_5(6\beta^5+a)=s_1=v_5(D)-5>k_2$. So the element $\eta_1=\frac{\theta^5+\beta\theta^4+\beta^2\theta^3+\beta^3\theta^2+\beta^4\theta-5\beta^5}{5^{k_2}}$ is in $S_{(5)}$. Choose an integer $x_1$ such that  ${a}x_1+6(\frac{b}{5})\equiv 0~(mod~5^{k_2})$.  Then $5^{k_2}$ divides $\beta-x_1$  in $\Z_{(5)}$. Note that $\beta\equiv-a~(mod~5)$ by virtue of the hypothesis. Therefore the elements $\eta_0',~\eta_1'$ defined by $$\eta_0'=\frac{\theta^4-2a\theta^3+3a^2\theta^2-4a^3\theta}{5},~\eta_1'=\frac{\theta^5+x_1\theta^4+x_1^2\theta^3+x_1^3\theta^2+x_1^4\theta-5x_1^5}{5^{k_2}}$$ are in $ S_{(5)}$. Since  $v_5(ind~\theta)=\frac{v_5(D)-2}{2}$, it follows that  $\{1,\theta,\theta^2,\theta^3,\eta_0',\eta_1'\}$ is a 5-integral basis of $K$ in view of Proposition 3.A.\\
  { \bf Case G8 :} $v_5(b)=1,~v_5(a)\geq 1$. In view of (\ref{eq:p1.1}), we have $v_5(D)=5$.  The $x$-Newton polygon of $f(x)$ has a single edge joining the points (0, 0) and (6, 1). Arguing as in case E2, we see that $v_5(ind~\theta)=0$ and $\{1,\theta,\theta^2,\theta^3,\theta^4,\theta^5\}$ is a 5-integral basis of $K$.\\
  { \bf Case G9 :} $v_5(b)\geq 2,~v_5(a)=0,~a^4\not\equiv 1~(mod~25)$. In this case  $v_5(D)=5$, $r_0=1$ and $r_1\geq 1$. Set $\phi_1(x)=x$ and $\phi_2(x)=x+a$.  It can be easily seen that the $\phi_1$-Newton polygon and the $\phi_2$-Newton polygon of $f$ are the same as in case G2. Arguing as in case G2, we see that $v_5(ind~\theta)=0$ and $\{1,\theta,\theta^2,\theta^3,\theta^4,\theta^5\}$ is a 5-integral basis of $K$.\\ 
  { \bf Case G10 :} $v_5(b)\geq 2,~v_5(a)=0,~a^4\equiv 1~(mod~25)$. In this case $v_5(D)=5$, $r_0\geq 2$ and  $r_1=1$. Set $\phi_1(x)=x$ and $\phi_2(x)=x+a$. One can check that the $\phi_i$-Newton polygon of $f$ is the same as in case G4 for $i=1,~2$. Proceding exactly as in case G4, we see that $v_5(ind~\theta)= 1$ and $\{1,\theta,\theta^2,\theta^3,\theta^4,\frac{\theta^5-a\theta^4+a^2\theta^3-a^3\theta^2+\theta}{5}\}$ is a 5-integral basis of $K$.\\ 
   { \bf Case G11 :} $v_5(b)=2, ~v_5(a)= 1$. One can check that $v_5(D)=10.$ The $x$-Newton polygon of $f(x)$ has two edges . The first edge  is the line segment joining the point (0, 0) with (5, 1) and the second edge is the line segment joining the point (5, 1) with (6, $2)$. Proceeding as in case E4, we see that $v_5(ind~\theta)=1$ and $\{1, \theta,\theta^2,\theta^3,\theta^4,\frac{\theta^5}{5}\}$ is a 5-integral basis of $K$.\\
     { \bf Case G12 :} $v_5(b)=2,~v_5(a)\geq 2$. In view of (\ref{eq:p1.1}), we have $v_5(D)=10$. The $x$-Newton polygon of $f(x)$ has a single edge, say $S$ joining the points (0, 0) and (6, 2). The polynomial associated with $f(x)$ with respect to $(x,S)$ is $Y^2+\overline{c}$ where $\overline{c}\neq \overline{0}$. Arguing exactly as in case F5, we see that $v_5(ind~\theta)=3$ and $\{1, \theta,\theta^2,\frac{\theta^3}{5},\frac{\theta^4}{5},\frac{\theta^5}{5}\}$ is a 5-integral basis of $K$.\\
      { \bf Case G13 :} $v_5(b)\geq 3, ~v_5(a)= 1$. By virtue of (\ref{eq:p1.1}), we have $v_5(D)=11.$. The $x$-Newton polygon of $f(x)$ has two edges . The first edge  is the line segment joining the point (0, 0) with (5, 1) and the second edge is the line segment joining the point (5, 1) with (6, $v_5(b))$. Arguing as in Case E4, we see that $v_5(ind~\theta)=1$ and $\{1, \theta,\theta^2,\theta^3,\theta^4,\frac{\theta^5}{5}\}$ is a 5-integral basis of $K$.\\
      { \bf Case G14 :} $v_5(b)= 3,~v_5(a)= 2$. In view of (\ref{eq:p1.1}), we have $v_5(D)=15$.  The $x$-Newton polygon of $f(x)$ has two edges . The first edge is the line segment joining the point (0, 0) with  (5, 2) and the second edge is the line segment joining the point (5, 2) with  (6, 3). Arguing as in case E5, we see that $v_5(ind~\theta)=4$ and $\{1, \theta,\theta^2,\frac{\theta^3}{5},\frac{\theta^4}{5},\frac{\theta^5}{5^2}\}$ is a 5-integral basis of $K$. \\
         { \bf Case G15 :} $v_5(b)= 3, ~v_5(a)\geq 3$. By virtue of (\ref{eq:p1.1}), we have $v_5(D)=15.$  The $x$-Newton polygon of $f(x)$ has a single edge, say $S$ joining the points (0, 0) and (6, 3) with slope $\frac{1}{2}$. The polynomial associated over the field $\F_5$ to $f(x)$ with respect to ($x,S)$ is $Y^3+\overline{c}$ where  $\overline{c}\neq \overline{0}$. Arguing as in case E6, we see that $v_5(ind~\theta)=6$ and $\{1, \theta,\frac{\theta^2}{5},\frac{\theta^3}{5},\frac{\theta^4}{5^2},\frac{\theta^5}{5^2}\}$ is a 5-integral basis of $K$. \\
     { \bf Case G16 :} $v_5(b)\geq 4,~v_5(a)= 2$. In view of (\ref{eq:p1.1}), we have $v_5(D)=17$.  The $x$-Newton polygon of $f(x)$ has two edges . The first edge  is the line segment joining the point (0, 0) with  (5, 2) and the second edge is the line segment joining the point (5, 2) with  (6, $v_5(b)$). Arguing as in case E5, we see that $v_5(ind~\theta)=4$ and $\{1, \theta,\theta^2,\frac{\theta^3}{5},\frac{\theta^4}{5},\frac{\theta^5}{5^2}\}$ is a 5-integral basis of $K$. \\
      { \bf Case G17 :} $v_5(b)= 4, ~v_5(a)= 3$. By virtue of (\ref{eq:p1.1}), we have $v_5(D)=20.$   The $x$-Newton polygon of $f(x)$ has two edges . The first edge  is the line segment joining the point (0, 0) with  (5, 3) and the second edge  is the line segment joining the point (5, 3) with  (6, 4). Proceeding as in case E8, we see that $v_5(ind~\theta)=7$ and $\{1, \theta,\frac{\theta^2}{5},\frac{\theta^3}{5},\frac{\theta^4}{5^2},\frac{\theta^5}{5^3}\}$ is a 5-integral basis of $K$. \\
       { \bf Case G18 :} $v_5(b)= 4,~v_5(a)\geq 4$. In view of (\ref{eq:p1.1}), we have $v_5(D)=20$. The $x$-Newton polygon of $f(x)$ has a single edge, say $S$ joining the points (0, 0) and (6, 4). The polynomial associated  to $f(x)$ with respect to ($x,S)$ is  $Y^2+\overline{c}$ where  $\overline{c}\neq \overline{0}$. Arguing exactly as in case F5,  we see that $v_5(ind~\theta)=8$ and $\{1, \theta,\frac{\theta^2}{5},\frac{\theta^3}{5^2},\frac{\theta^4}{5^2},\frac{\theta^5}{5^3}\}$ is a 5-integral basis of $K$.\\
       { \bf Case G19 :} $v_5(b)\geq 5, ~v_5(a)= 3$. By virtue of (\ref{eq:p1.1}), we have $v_5(D)=23.$   The $x$-Newton polygon of $f(x)$ has two edges . The first edge  is the line segment joining the point (0, 0) with  (5, 3) and the second edge is the line segment joining the point (5, 3) with  (6, $v_5(b)$).  Proceeding as in case E8, we see that $v_5(ind~\theta)=7$ and $\{1, \theta,\frac{\theta^2}{5},\frac{\theta^3}{5},\frac{\theta^4}{5^2},\frac{\theta^5}{5^3}\}$ is a 5-integral basis of $K$. \\
        { \bf Case G20 :} $v_5(b)= 5,~v_5(a)= 4$. In view of (\ref{eq:p1.1}), we have $v_5(D)=25$.  The $x$-Newton polygon of $f(x)$ has two edges . The first edge is the line segment joining the point (0, 0) with  (5, 4) and the second edge is the line segment joining the point (5, 4) with  (6, 5).  Arguing as in case E8, we see that $v_5(ind~\theta)=10$ and $\{1, \theta,\frac{\theta^2}{5},\frac{\theta^3}{5^2},\frac{\theta^4}{5^3},\frac{\theta^5}{5^4}\}$ is a 5-integral basis of $K$. \\
        { \bf Case G21 :} $v_5(b)=5, ~v_5(a)\geq 5$. By virtue of (\ref{eq:p1.1}), we have $v_5(D)=25.$  The $x$-Newton polygon of $f(x)$ has a single edge joining the points (0, 0) and (6, 5). Proceeding as in case E10, we see that $v_5(ind~\theta)=10$ and $\{1, \theta,\frac{\theta^2}{5},\frac{\theta^3}{5^2},\frac{\theta^4}{5^3},\frac{\theta^5}{5^4}\}$ is a 5-integral basis of $K$. \\
           { \bf Case G22 :} $v_5(b)\geq 6,~v_5(a)=4$. In view of (\ref{eq:p1.1}), we have $v_5(D)=29$.  The $x$-Newton polygon of $f(x)$ has two edges . The first edge is the line segment joining the point (0, 0) with  (5, 4) and the second edge is the line segment joining the point (5, 4) with  (6, $v_5(b)$). Arguing as in case E8, we see that $v_5(ind~\theta)=10$ and $\{1, \theta,\frac{\theta^2}{5},\frac{\theta^3}{5^2},\frac{\theta^4}{5^3},\frac{\theta^5}{5^4}\}$ is a 5-integral basis of $K$. 
           
          This completes the proof of Table III.\\
  { \bf Case H1 :}  $v_p(b)=0,v_p(a)\geq 1$ or $v_p(a)=0,v_p(b)\geq 1$. By virtue of (\ref{eq:p1.1}), we have $v_p(D)=0$ and hence $\{1,\theta,\theta^2,\theta^3,\theta^4,\theta^5\}$ is a $p$-integral basis of $K$.\\
  { \bf Case H2 :} $v_p(b)=1,~v_p(a)\geq 1$. In this case, we have $v_p(D)=5$.  The $x$-Newton polygon of $f(x)$ has a single edge joining the points (0, 0) and (6, 1). Arguing as in case E2, we see that $v_p(ind~\theta)=0$ and $\{1,\theta,\theta^2,\theta^3,\theta^4,\theta^5\}$ is a $p$-integral basis of $K$.\\
   { \bf Case H3 :} $v_p(b)\geq 2, ~v_p(a)= 1$. By virtue of (\ref{eq:p1.1}), we have $v_p(D)=6.$ The $x$-Newton polygon of $f(x)$ has two edges . The first edge  is the line segment joining the point (0, 0) with (5, 1) and the second edge is the line segment joining the point (5, 1) with (6, $v_p(b))$. Proceeding as in case E4, we see that $v_p(ind~\theta)=1$ and $\{1, \theta,\theta^2,\theta^3,\theta^4,\frac{\theta^5}{p}\}$ is a $p$-integral basis of $K$.\\
    { \bf Case H4 :} $v_p(b)=2,~v_p(a)\geq 2$. In view of (\ref{eq:p1.1}), we have $v_p(D)=10$. The $x$-Newton polygon of $f(x)$ has a single edge joining the points (0, 0) and (6, 2). Arguing as in case F5, we see that $v_p(ind~\theta)=3$ and $\{1, \theta,\theta^2,\frac{\theta^3}{p},\frac{\theta^4}{p},\frac{\theta^5}{p}\}$ is a $p$-integral basis of $K$.\\
     { \bf Case H5 :} $v_p(b)\geq 3,~v_p(a)= 2$. We have $v_p(D)=12$.  The $x$-Newton polygon of $f(x)$ has two edges . The first edge  is the line segment joining the point (0, 0) with  (5, 2) and the second edge is the line segment joining the point (5, 2) with  (6, $v_p(b)$). Arguing as in case E5, we see that $v_p(ind~\theta)=4$ and $\{1, \theta,\theta^2,\frac{\theta^3}{p},\frac{\theta^4}{p},\frac{\theta^5}{p^2}\}$ is a $p$-integral basis of $K$. \\
      { \bf Case H6 :} $v_p(b)= 3, ~v_p(a)\geq 3$. By virtue of (\ref{eq:p1.1}), we have $v_p(D)=15.$  The $x$-Newton polygon of $f(x)$ has a single edge, say $S$ joining the points (0, 0) and (6, 3) with slope $\frac{1}{2}$. The polynomial associated over the field $\F_p$ to $f(x)$ with respect to ($x,S)$ is $Y^3+\overline{c}$ where  $\overline{c}\neq \overline{0}$. Arguing as in case E6, we see that $v_p(ind~\theta)=6$ and $\{1, \theta,\frac{\theta^2}{p},\frac{\theta^3}{p},\frac{\theta^4}{p^2},\frac{\theta^5}{p^2}\}$ is a $p$-integral basis of $K$. \\
       { \bf Case H7 :} $v_p(b)\geq 4, ~v_p(a)= 3$. We have $v_p(D)=18.$   The $x$-Newton polygon of $f(x)$ has two edges . The first edge is the line segment joining the point (0, 0) with  (5, 3) and the second edge is the line segment joining the point (5, 3) with  (6, $v_p(b)$).  Proceeding as in case E8, we see that $v_p(ind~\theta)=7$ and $\{1, \theta,\frac{\theta^2}{p},\frac{\theta^3}{p},\frac{\theta^4}{p^2},\frac{\theta^5}{p^3}\}$ is a $p$-integral basis of $K$. \\
         { \bf Case H8 :} $v_p(b)= 4,~v_p(a)\geq 4$. In view of (\ref{eq:p1.1}), we have $v_p(D)=20$. The $x$-Newton polygon of $f(x)$ has a single edge, say $S$ joining the points (0, 0) and (6, 4). The polynomial associated  to $f(x)$ with respect to ($x,S)$ is  $Y^2+\overline{c}$ where  $\overline{c}\neq \overline{0}$. As in case F9,  we see that $v_p(ind~\theta)=8$ and $\{1, \theta,\frac{\theta^2}{p},\frac{\theta^3}{p^2},\frac{\theta^4}{p^2},\frac{\theta^5}{p^3}\}$ is a $p$-integral basis of $K$.\\
             { \bf Case H9 :} $v_p(b)\geq 5,~v_p(a)= 4$. In view of (\ref{eq:p1.1}), we have $v_p(D)=24$.  The $x$-Newton polygon of $f(x)$ has two edges . The first edge, say $S_1$  is the line segment joining the point (0, 0) with  (5, 4) and the second edge, say $S_2$ is the line segment joining the point (5, 4) with  (6, $v_p(b)$). The polynomial associated to $f(x)$ with respect to $(x,S_i)$ is linear for $i=1,~2$. As in case E9, we see that $v_p(ind~\theta)=10$ and $\{1, \theta,\frac{\theta^2}{p},\frac{\theta^3}{p^2},\frac{\theta^4}{p^3},\frac{\theta^5}{p^4}\}$ is a $p$-integral basis of $K$. \\
               { \bf Case H10 :} $v_p(b)=5, ~v_p(a)\geq 5$. By virtue of (\ref{eq:p1.1}), we have $v_p(D)=25.$  The $x$-Newton polygon of $f(x)$ has a single edge joining the points (0, 0) and (6, 5). Proceeding as in case E10, we see that $v_p(ind~\theta)=10$ and $\{1, \theta,\frac{\theta^2}{p},\frac{\theta^3}{p^2},\frac{\theta^4}{p^3},\frac{\theta^5}{p^4}\}$ is a $p$-integral basis of $K$. \\
               { \bf Case H11, H12 :} $v_p(ab)=0$. In view of an already  known result $v_p(d_K)=0$ or $1$ according as $v_p(D)$ is even or odd (see \cite{LNV}). Therefore if $m$ denotes the largest integer not exceeding $\frac{v_p(D)}{2}$, then $v_p(ind~\theta)=m$. Claim is that there do not exist any integers $x,~y,~z,~v$ such that $\frac{1}{p}[x+y\theta+z\theta^2+v\theta^3+\theta^4]$ is integral over $\Z_{(p)}$. Suppose to the contrary, there exists such an element $\frac{1}{p}[x+y\theta+z\theta^2+v\theta^3+\theta^4]=\xi_1~(say)$ in $S_{(p)}$. It can be easily seen that $Tr_{K/\mathbb Q}(\theta^i)=0$ for $1\leq i\leq 4$ and $Tr_{K/\mathbb Q}(\theta^5)=-5a$ . Thus $Tr_{K/\mathbb Q}(\xi_1)=\frac{6x}{p}$ which must be in $\Z_{(p)}$ and hence $p\mid x$; consequently $\xi_1-\frac{x}{p}=\frac{1}{p}[y\theta+z\theta^2+v\theta^3+\theta^4]$ will be in  $S_{(p)}$; this implies that $\frac{1}{p}[y+z\theta+v\theta^2+\theta^3]$ is in $S_{(p)}$ because the ideals $pA_K$  and $\theta A_K$ are coprime in view of the hypothesis that $p$ does not divide $N_{K/\mathbb Q}(\theta)=b$. Repeating the above argument with $\frac{1}{p}[y+z\theta+v\theta^2+\theta^3]$, we see that $p\mid y$. Similarly $p\mid z,~p\mid v$. So $\xi_1-\frac{1}{p}[x+y\theta+z\theta^2+v\theta^3]=\frac{\theta^4}{p}\in S_{(p)}$, which is impossible as $\frac{b^4}{p^6}=N_{K/\mathbb Q}(\frac{\theta^4}{p}) $ does not belong to $\Z_{(p)}$. This contradiction proves the claim . It is immediate from the claim and Proposition 3.A that $K$ has a $p$-integral basis of the type $\{1,\theta,\theta^2,\theta^3,\theta^4,\eta\}$ where $\eta=\frac{1}{p^m}[x+y\theta+z\theta^2+v\theta^3+w\theta^4+\theta^5]$ for some $x,~y,~z,~v,~w$ in $\Z$. We now describe the choice of $x,~y,~z,~v,~w$ modulo $p^m$. Since  $Tr_{K/\mathbb Q}(\eta)=\frac{6x-5a}{p^m}\in\Z_{(p)},$ it follows that $6x\equiv 5a~(mod~p^m).$ Similarly using the fact that $Tr_{K/\mathbb Q}(\eta\theta^i)\in\Z_{(p)}$ for $1\leq i\leq 4$, we see that $$5aw+6b\equiv 0~(mod~p^m),~5av+6bw\equiv 0~(mod~p^m),$$
                $$5az+6bv\equiv 0~(mod~p^m),~5ay+6bz\equiv 0~(mod~p^m).$$ On solving the above congruences, we see that 
               $$(5a)^4y\equiv(6b)^4~(mod~p^m),~(5a)^3z\equiv-(6b)^3~(mod~p^m),$$
               $$(5a)^2v\equiv(6b)^2~(mod~p^m),~5aw\equiv -6b~(mod~p^m).$$ 
   {\bf Acknowledgements}\\ The authors are thankful to Dr. Anuj Jakhar for useful  discussions. The first author is grateful to the Council of Scientific and Industrial Research, New Delhi for providing financial support in the form of Junior Research Fellowship through Grant No. 09/135(0878)/2019-EMR-1.  The second author is thankful to Indian National Science Academy for senior scientistship.

\end{document}